\pgfplotsset{compat=1.8}
    \newcolumntype{P}[1]{>{\centering\arraybackslash}p{#1}}
    \newcolumntype{M}[1]{>{\centering\arraybackslash}m{#1}}
\newtheorem{thm}{Theorem}[section]
\newtheorem{cor}[thm]{Corollary}
\newtheorem{prop}[thm]{Proposition}
\newtheorem*{thm*}{Theorem}
\theoremstyle{definition}
\newtheorem{defn}{Definition}[section]
\newtheorem{rem}{Remark}[section]
\newtheorem{assumption}{Assumption}[section]
\def\S{\mathbb{S}}
\newcommand{\dlim}{\displaystyle \lim}
\newcommand{\dlimsup}{\displaystyle \limsup}
\newcommand{\dliminf}{\displaystyle \liminf}
\newcommand{\dmin}{\displaystyle \min}
\def\T1{T^1_{\{x_n\}}}
\def\Sp{\mathcal S}
\def\C{\mathcal C}
\def\Re{\mathcal R}
\def\R{\mathbb R}
\def\Z{\mathbb Z}
\def\A{\mathcal A}
\def\T1{T^1_{\{x_n\}}}
\def\Td1{T^{D,1}_{\{x_n\}}}
\def\Ts1{T^{S,1}_{\{x_n\}}}
\def\K{\mathcal{K}}
\def\Tid1{T^{D,1}_{\{x_n+\zeta_i\}}}
\def\Tis1{T^{S,1}_{\{x_n+\zeta_i\}}}
\def\Tjs1{T^{S,1}_{\{x_n+\zeta_j\}}}
\def\Tjd1{T^{D,1}_{\{x_n+\zeta_j\}}}
\newcommand*\colvec[1]{
        \global\colveccount#1
        \begin{pmatrix}
        \colvecnext
}
\def\colvecnext#1{
        #1
        \global\advance\colveccount-1
        \ifnum\colveccount>0
                \\
                \expandafter\colvecnext
        \else
                \end{pmatrix}
        \fi
}
\newcommand{\labitem}[2]{%
\def\@itemlabel{\textbf{#1}}
\item
\def\@currentlabel{#1}\label{#2}}
\title{Classification for dynamics of Markov chains on non-negative integers with arbitrary transition rates and its application}
\author{Minjun Kim, Seokhwan Moon, Jinsu Kim}
\date{\today}
\begin{document}

\maketitle

\begin{abstract}
     Continuous-time Markov chains on non-negative integers can be used for modeling biological systems, population dynamics and queueing models. Qualitative behaviors of birth-and-death models, typical examples of such a one-dimensional continuous-time Markov chains, have been substantially studied. For one-dimensional Markov chains with polynomial transition rates, recent studies provided criteria for their long-term behavior. 
    In this paper, we provide a classification of Markov chains on non-negative integers when the transition rates are arbitrary functions. The criteria are written with asymptotics of the transition rates. This classification implies their dynamical properties, including explosivity, recurrence, positive recurrence, and exponential ergodicity. 
    As an application, we derive a complete classification (if and only if conditions) for those dynamical features when the transition rates have certain expansion forms, which include  all rational functions, so that our classifications cover mass-action kinetics, Michaelis–Menten kinetics, and Haldane equations. Our classification solely relies on easily computable quantities: the maximal degree of the expansion of the transition rates, the mean and the variance of the transition rates. We demonstrate the utility of this classification framework using the approximation of high-dimensional mass-action systems by a one-dimensional reaction system with general rational kinetics.
\end{abstract}

\section{Introduction}

One-dimensional continuous-time Markov chains on non-neg- ative integers have been widely used to model population growth in various biological and ecological systems, particularly describing the time evolution of copy numbers of proteins, bacteria, cells, and chemical species \cite{jia2020dynamical, holehouse2020stochastic, bo2017multiple}.
Thanks to their simplicity, these models are particularly useful and commonly applied. Typical examples include birth-and-death processes. They are sophisticated enough to serve as powerful tools for studying the copy numbers of biochemical species, yet simple enough to allow detailed analyses of their qualitative and quantitative behaviors.

However, when the Markov chain allows multiple jumps with arbitrary jump sizes, their qualitative properties such as positive recurrence and exponential ergodicity have not been fully characterized for general transition rates. In this paper, we provide simple criteria for determining various dynamical features of continuous-time Markov chains on non-negative integers. When the transition rates of these stochastic processes are rational functions, our criteria yield a complete classification based on their dynamical properties.

A critical motivation of our work is that one-dimensional Markov chains with non-polynomial transition rates can often be derived as reductions of higher-dimensional Markov chains governed by mass-action kinetics, which are fundamental polynomial laws for biochemical reaction systems. 
A typical example is the reduction of an enzyme--substrate model to the Michaelis--Menten kinetics model via a quasi--steady-state approximation \cite{segel1975enzyme}. 
Such time-scale separation--based model reductions can lead to more general one-dimensional Markov chains with non-polynomial transition rates. For instance, consider two biochemical species that catalyze each other under two distinct timescales described by the following biochemical reaction network:
\[
\begin{aligned}
&\emptyset \to 2A, \quad B + 2A \to B + A, \quad B + A \to B && \text{(slow reactions)},\\
&\emptyset \to B, \quad B + 2A \to 2A, \quad B \to \emptyset && \text{(fast reactions)}.
\end{aligned}
\]
Here, the scaled copy number of \( B \) rapidly converges, in probability, to values that can be expressed as a function of \( A \). 
Consequently, the dynamics of \( B \) can be approximated by a one-dimensional process involving the reactions \( A \to \emptyset \to 2A \), whose transition rates are rational functions (see Section~\ref{sec:reduction} for details). 
Due to such an utility of one-dimensional Markov models, previous studies focused qualitative behaviors of one-dimensional Markov models with non-polynomial transition rates \cite{lo2024strong, mao2004exponential, XuHansenWiuf2022}.

Several previous studies have investigated one-dimensional Markov chains obtained through such model reductions. 
In \cite{jia2020dynamical}, an autoregulated bursty gene expression model was analyzed via reduction to a one-dimensional Markov chain. 
In \cite{holehouse2020stochastic}, a reduced one-dimensional stochastic model for an enzyme--substrate system was used to derive detailed dynamical properties, including a closed-form solution of the chemical master equation and transient bimodality. 
Methods for reducing various biochemical systems to one-dimensional models were proposed in \cite{bo2017multiple}, with examples including bursty gene expression, receptor--ligand interactions, and the flagellar motor of \emph{E.~coli}. 
Furthermore, it was shown that a high-dimensional Markov chain can be bounded by a coupled one-dimensional Markov chain, and this coupling approach was applied to verify ergodicity of the original system \cite{ballif2025partition}. 
Notably, most of the resulting one-dimensional models in these studies admit non-polynomial transition rates. 

There also exist systematic approaches to model reduction for biochemical reaction systems. 
In \cite{song2021universally}, the validity of the stochastic quasi--steady-state approximation for a broad class of reaction systems was established. 
Model reduction techniques employing intermediate species were further developed in \cite{feliu2013simplifying, capellettie2016intermediate}.

Given the broad applicability of one-dimensional Markov models, understanding their long-term qualitative behavior is essential. 
Various properties, such as explosivity, recurrence, positive recurrence, and exponential ergodicity, have been analyzed for certain classes of one-dimensional population models, providing valuable insights into their stability and dynamics \cite{chen2004markov, mao2004exponential}. 
However, the applicability of these previous studies is limited. 
The criteria for ergodicity and exponential ergodicity proposed in \cite[Theorems~3.16, 4.52, and~4.54]{chen2004markov} and \cite{mao2004exponential} are valid only for unit-birth processes. 
Moreover, the criteria for exponential ergodicity given in \cite{mao2004exponential} are not necessary conditions unless the underlying process is a unit birth-and-death process. 
In other words, those results are confined to Markov chains where upward transitions are limited to births of size one.

For models with multiple birth sizes, criteria for various dynamical features were provided in \cite{XuHansenWiuf2022}. 
These criteria can be easily checked using the degrees of the transition rates and the corresponding transition vectors. 
Notably, in that work, the transition rates were restricted to polynomial functions. 
Therefore, the applicability of such results remains limited, since, as discussed earlier, many realistic biological models can be represented by one-dimensional Markov chains whose transition rates are rational or more general non-polynomial functions.

In this paper, we provide general criteria for determining the dynamical features of one-dimensional Markov chains on non-negative integers. 
Notably, our conditions are valid for arbitrary types of transition rates and for any finite set of possible jump sizes. 
The dynamical properties of interest include explosivity, transience, null recurrence, positive recurrence, and exponential ergodicity. 

We first establish general criteria on the arbitrary transition rates that can be used to verify these dynamical features. 
The key idea underlying our approach is to generalize the classical analysis of simple birth-and-death models, whose dynamical properties can be determined by elementary calculations involving the ratio of the birth and death rates. 
To achieve this generalization, we primarily employ Lyapunov function techniques. 

We then apply these general conditions to derive a complete classification of one-dimensional Markov chains whose transition rates $\lambda_i(x)$'s admit a \emph{Laurent-type expansion}: there exists $R\in \mathbb Z$ such that
\begin{align} 
    \lambda_i(x) = a_i x^R + b_i x^{R-1} + O(x^{R-2}) \quad \text{for each $i$}, \label{eq:laurent exp}
\end{align}
The exponent $R$ may be nonpositive, so this class of transition functions includes both polynomial and rational functions. 
Hence, our classification extends the previous results in \cite{XuHansenWiuf2022}. 
These classification criteria for the Laurent-type expansion (Theorem~\ref{thm: main for laurante}) can be checked using four easily computable indices, $\alpha$, $\gamma$, $\vartheta$, and $R$ (see~\eqref{eq : def-ctmc}), in a manner analogous to the criteria given in \cite{XuHansenWiuf2022}. 
Our classification is \emph{complete} in the sense that each dynamical feature corresponds one-to-one to a mutually disjoint subset of the index space $\mathbb{R} \times \mathbb{R}_{\ge 0} \times \mathbb{R}_{\ge 0} \times \mathbb{Z}$, to which the tuple $(\alpha, \gamma, \vartheta, R)$ belongs.

This paper is organized as follows. 
In Section~\ref{sec : motivation}, we provide the motivation for our main approach, which originates from unit birth-and-death models. 
Section~\ref{sec : prelim} presents the necessary preliminaries, including precise definitions of the dynamical features of interest and an overview of the Lyapunov function approach. 
In Section~\ref{sec: main}, we establish general criteria for each dynamical feature in terms of the mean and variance of the transitions. 
Section~\ref{sec : main-ctmc} states the main theorem for one-dimensional Markov chains whose transition rates satisfy \eqref{eq:laurent exp}. 
Finally, Section~\ref{sec : applications} illustrates applications of our classification to stochastic reaction systems.

\section{Motivation}\label{sec : motivation}
Our main theoretical tool for obtaining easily verifiable sufficient conditions for various dynamical features is the construction of Lyapunov functions. 
For a continuous-time Markov chain $\bar X$ on a countable state space $\bar \S$, its infinitesimal generator $\mathcal{A}$ is defined as
\begin{align}\label{eq:generator}
    \mathcal{A} f(x) = \sum_{y\in\bar \S} q_{x,y}f(y) \quad \text{for each $x \in \bar \S$},
\end{align}
for a suitable function $f$, where $q_{x,y}$ denotes the transition rate from $x$ to $y$. 
With a Lyapunov function $f$, the generator $\mathcal{A}$ can characterize long-term behaviors of $\bar X$, including recurrence, transience, and exponential ergodicity \cite{MT-LyaFosterIII,anderson2018some}. 
In particular, if there exists a positive function $f:\bar \S \to \mathbb{R}_{>0}$ such that i) $\mathcal{A} f(x) \le 0$ and ii) $f$ increases to infinity, then $\bar X$ is recurrent.

The construction of Lyapunov functions in this work is inspired by birth-and-death models, which are continuous-time Markov chains $\{X(t)\}_{t \ge 0}$ on $\mathbb{Z}_{\ge 0}$. 
For such processes, the transition rates are given by
\begin{align*}
    &P(X(t+\Delta t)=x+1 \mid X(t)=x) = b_x \Delta t + o(\Delta t),\\
    &P(X(t+\Delta t)=x-1 \mid X(t)=x) = d_x \Delta t + o(\Delta t),
\end{align*}
for sufficiently small $\Delta t$, where $b_x$ and $d_x$ are the birth and death rates, respectively, with $d_0 = 0$ to ensure that $X$ remains on the non-negative integers.

The following classical theorem provides a necessary and sufficient condition for recurrence of $X$ \cite{karlin1957classification}:
\begin{thm}[Theorem 1, \cite{karlin1957classification}]
    Let $X$ be a birth-and-death process with birth rates $b_x$ and death rates $d_x$. 
    Then $X$ is recurrent if and only if
    \begin{align}\label{eq:1}
        \sum_{x=1}^\infty \prod_{y=1}^x \frac{d_y}{b_y} = \infty.
    \end{align}
\end{thm}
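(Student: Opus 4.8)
The plan is to exploit the classical theory of birth-and-death processes via their embedded structure and the harmonic-function characterization of recurrence. Recall that a minimal continuous-time Markov chain is recurrent if and only if its jump chain (the embedded discrete-time Markov chain) is recurrent, since recurrence is a property of the trace of the process on the state space and is insensitive to the holding times (provided the chain is non-explosive, which for a birth-and-death process on $\mathbb{Z}_{\ge 0}$ with $d_0=0$ one checks separately or absorbs into the standard conventions). The embedded chain of $X$ is a nearest-neighbor random walk on $\mathbb{Z}_{\ge 0}$ that, from state $x \ge 1$, moves to $x+1$ with probability $p_x = b_x/(b_x+d_x)$ and to $x-1$ with probability $q_x = d_x/(b_x+d_x)$, and from $0$ moves to $1$ (reflecting). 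So the whole problem reduces to the classical recurrence criterion for a one-dimensional nearest-neighbor walk.

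First I would set up the standard argument for the jump chain. Fix a large state $N$ and consider the probability $h(x)$ that the walk started at $x \in \{1,\dots,N\}$ reaches $0$ before $N$. The function $h$ is harmonic for the walk on the interior, i.e. $h(x) = p_x h(x+1) + q_x h(x-1)$ with boundary values $h(0)=1$, $h(N)=0$. Solving this second-order linear recurrence in the usual way, one writes the discrete derivative $\Delta h(x) = h(x+1)-h(x)$ and finds $\Delta h(x)/\Delta h(x-1) = q_x/p_x = d_x/b_x$, so that $\Delta h(x) = \Delta h(0)\prod_{y=1}^x (d_y/b_y)$. Summing and imposing the boundary conditions expresses $h(x)$ as a ratio of partial sums of $\prod_{y=1}^{x}(d_y/b_y)$. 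In particular the probability of escaping to $N$ before returning to $0$, starting from $1$, equals $1 \big/ \sum_{x=0}^{N-1}\prod_{y=1}^{x}(d_y/b_y)$ (with the empty product equal to $1$).

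Next I would pass to the limit $N\to\infty$. The chain is recurrent if and only if, started from a neighbor of $0$, it returns to $0$ with probability one, which happens if and only if the escape probability above tends to $0$ as $N \to \infty$, i.e. if and only if $\sum_{x=0}^{\infty}\prod_{y=1}^{x}(d_y/b_y) = \infty$. Since the $x=0$ term is $1$, this series diverges if and only if $\sum_{x=1}^{\infty}\prod_{y=1}^{x}(d_y/b_y) = \infty$, which is exactly \eqref{eq:1}. Finally I would translate back: recurrence of the jump chain is equivalent to recurrence of $X$ itself, giving the stated equivalence.

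The main obstacle, and the only genuinely delicate point, is the reduction from the continuous-time chain $X$ to its jump chain. One must be careful that (i) the process is irreducible on $\mathbb{Z}_{\ge 0}$ — which requires $b_x>0$ and $d_{x+1}>0$ for all $x\ge 0$, an implicit hypothesis — and (ii) the holding times do not cause explosion in a way that would break the equivalence between recurrence of $X$ and recurrence of the embedded chain; for a birth-and-death chain the standard convention is to consider the minimal process, and recurrence (as opposed to positive recurrence) is indeed determined by the jump chain regardless of explosivity, but this should be stated cleanly. Everything else is the textbook computation with the harmonic function $h$; I would present that computation compactly rather than belaboring the algebra.
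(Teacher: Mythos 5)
Your proposal is correct. Note that the paper does not prove this statement at all --- it is quoted verbatim from Karlin--McGregor \cite{karlin1957classification} as a known classical result --- so there is no in-paper argument to compare against. Your route (pass to the embedded nearest-neighbour walk via the equivalence of recurrence for a minimal chain and its jump chain, then solve the harmonic equation $h(x)=p_xh(x+1)+q_xh(x-1)$ with $p_x=b_x/(b_x+d_x)$ to get the gambler's-ruin escape probability $1/\sum_{x=0}^{N-1}\prod_{y=1}^{x}(d_y/b_y)$, and let $N\to\infty$) is the standard textbook proof and is sound; the original Karlin--McGregor paper instead derives the criterion from their spectral (orthogonal-polynomial) representation of birth-and-death processes, which is heavier machinery but yields much more. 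The two caveats you flag --- irreducibility ($b_x>0$, $d_{x+1}>0$) and the fact that recurrence of the minimal process is determined by the jump chain irrespective of holding times --- are exactly the right ones to state, and the second is precisely \cite[Theorem~3.4.1]{NorrisMC97}, which the paper itself invokes in Proposition~\ref{thm: hairer}, so your reduction is consistent with the paper's own toolkit.
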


From \eqref{eq:1}, the following asymptotic criteria are derived \cite[Theorem 1]{abramov2021necessary}:
\begin{align}
\begin{split}\label{eq:bd criteria}
  &\text{if } \frac{b_x}{d_x} \le 1 + \frac{1}{x} + \frac{1}{x \log x} \text{ for all but finitely many } x, \\
  &\text{ then $X$ is recurrent},\\
  &\text{if } \frac{b_x}{d_x} \ge 1 + \frac{1}{x} + \frac{q}{x \log x} \text{ for all but finitely many } x \text{ with some } q>1,\\ &\text{ then $X$ is transient}.
\end{split}
\end{align}

Alternatively, \eqref{eq:bd criteria} can be derived using the Foster-Lyapunov criteria. 
More precisely, for a birth-and-death process $X$, the generator in \eqref{eq:generator} can be expressed as
\begin{align}\label{eq:bd gen}
    \mathcal{A} f(x) = \frac{1}{d_x} \left( \frac{b_x}{d_x} \big(f(x+1)-f(x)\big) + \big(f(x-1)-f(x)\big) \right).
\end{align}
By the Foster-Lyapunov criteria for recurrence and transience (see Proposition~\ref{thm: hairer}), it suffices to construct a suitable function $f(x)$ such that \eqref{eq:bd gen} is nonpositive for sufficiently large $x$.  A natural choice is $f(x) \sim x^p (\log x)^q$ for some constants $p$ and $q$, which immediately yields \eqref{eq:bd criteria} by the inequality $\mathcal Af(x)<0$. 
Moreover, the same Lyapunov function of the form $f(x) \sim x^p (\log x)^q$ can be applied directly to one-dimensional Markov chains with multiple jump sizes to derive sharp criteria for explosivity, transience, null recurrence, and positive recurrence.
We further use the Lyapunov function $f(x) = e^{p x}$ for some constant $p$  to derive sufficient conditions only for exponential ergodicity. 
All the sufficient conditions are expressed in terms of the asymptotic behaviors of the transition rates, making them easily verifiable without additional analysis (see Remark \ref{rem : motivation}). 
We then show that these conditions are sharp enough to fully classify the dynamical behaviors of one-dimensional Markov chains whose transition rates satisfy \eqref{eq:laurent exp}, based on readily checkable indices associated with the transition rates.

\begin{rem}\label{rem : motivation}
In Section \ref{sec: main}, the sufficient conditions derived via Lyapunov functions for the dynamical properties of general one-dimensional Markov chains are expressed in terms of
\begin{align}\label{eq: h}
    H_p(x) := \frac{(\log x)(m(x)x - p v(x))}{v(x)}, \quad \text{and} \quad
    J(x) := \frac{m(x)x}{v(x)},
\end{align}
where $m(x)$ and $v(x)$ denote the mean and variance of the drift of $X$, respectively (see Definition \ref{defn:m and v}).

For birth-and-death processes, $H_1(x)$ emerges naturally when the classical recurrence and transience conditions are rewritten in terms of $m(x)$ and $v(x)$.  
In this case, $m(x) = b_x - d_x$ and $v(x) = \frac{b_x + d_x}{2}$, so that the upper part of \eqref{eq:bd criteria} can be rewritten as
\begin{align*}
    \frac{b_x}{d_x} = \frac{m(x) + 2 v(x)}{-m(x) + 2 v(x)} \le 1 + \frac{1}{x} + \frac{1}{x \log x}.
\end{align*}
Hence, for large $x$, we have
\begin{align}\label{eq: mv1}
    m(x) \le v(x) \left( \frac{1}{x} + \frac{1}{x \log x} \right),
\end{align}
which implies that $X$ is recurrent. 
Similarly, for transience, if there exists a constant $q > 1$ such that
\begin{align}\label{eq: mv2}
    m(x) \ge v(x) \left( \frac{1}{x} + \frac{q}{x \log x} \right)
\end{align}
for large $x$, then $X$ is transient. 

Each of the following inequalities implies the corresponding condition in \eqref{eq: mv1} and \eqref{eq: mv2}, respectively:
\begin{align*}
    \limsup_{x \to \infty} H_1(x) < 1, \quad \text{and} \quad \liminf_{x \to \infty} H_1(x) > 1.
\end{align*}

For other dynamical properties of more general one-dimensional Markov chains (i.e, multiple jump sizes), the Foster-Lyapunov approach allows the sufficient conditions to be expressed in terms of $m(x)$ and $v(x)$, and thus can also be represented using $H_p(x)$ and $J(x)$. 
This approach provides a unified framework to handle both single-jump (birth-and-death) and multiple-jump cases.

\end{rem}

\section{Preliminaries}\label{sec : prelim}

In this section, we provide precise definitions of the dynamical features of interest for continuous-time Markov chains and introduce the notations used throughout this paper.

\subsection{Main model}\label{sec:main model}

Throughout this paper, we consider an irreducible conti- nuous-time Markov chain $X$ on a countable state space $\mathbb S \subseteq \mathbb Z_{\ge 0}$ (possibly infinite) defined on a probability space $(\Omega, \mathcal F, P)$. The generator of $X$ is given by
\begin{align}\label{eq:gen for reactions}
    \mathcal A f(x) = \sum_{\eta \in \mathbb Z} \lambda_\eta(x) \big(f(x+\eta) - f(x)\big),
\end{align}
where for each transition $\eta \in \mathbb Z$, the corresponding transition rate $\lambda_\eta(x)$ satisfies
\begin{align*}
    P(X(t+\Delta t) = x+\eta \mid X(t)=x) = \lambda_\eta(x) \Delta t + o(\Delta t), \quad \text{for small } \Delta t.
\end{align*}
We restrict $X$ to non-negative integers so that it can represent the copy number of a species in a chemical reaction system, which is our main application. Accordingly, we set $\lambda_\eta(x) = 0$ for each $\eta < 0$ if $x < |\eta|$.

To study these Markov chains using Lyapunov function methods, we impose the following assumptions \cite{anderson2025new}.

\begin{assumption}\label{basic assumption}
The transition rates $\lambda_\eta(x)$ satisfy:
\begin{enumerate}[(a)]
    \item There exists a finite set $\Gamma \subset \mathbb Z$ such that $\lambda_\eta \equiv 0$ for all $\eta \notin \Gamma$.
    \item For each $x \in \mathbb S$, we have $0 \le \lambda_\eta(x) < \infty$ for all $\eta$.
\end{enumerate}
\end{assumption}

These conditions ensure that the number of possible jump sizes is finite. We note that, while some generalizations are possible \cite{anderson2025new}, the finiteness condition (a) is maintained throughout this paper.

\begin{rem}
Any countable state space $\bar{\mathbb S}$ can be enumerated and embedded in $\mathbb Z$. However, if the original Markov chain on $\bar{\mathbb S}$ has bounded transition vectors, such an enumeration may result in unbounded jump sizes. Thus, (a) of Assumption \ref{basic assumption} explicitly restricts attention to Markov chains on non-negative integers with bounded jump sizes, which are the main focus of this work.
\end{rem}

Throughout this manuscript, $X$ denotes an irreducible continuous-time Markov chain on $\mathbb S$ satisfying Assumption~\ref{basic assumption}. When citing theorems for Markov processes on a general state space, we denote the state space by $\bar{\mathbb S}$. In such cases, we denote the general Markov chain on $\bar{\mathbb S}$ by $\bar X$ for clarity.

\subsection{Notations and key concepts}

We introduce the key notations used throu- ghout this manuscript.

\begin{enumerate}
    \item $\mathbb Z_{\ge 0}$ denotes the set of non-negative integers.

    \item For a subset $D \subseteq \mathbb R$, a statement holds ``for large (resp. small) $x \in D$'' if there exists a constant $M>0$ such that the statement holds for all $x \in D$ with $x \ge M$ (resp. $|x| \le M$).

    \item Consider two real-valued functions $f,g: D\subseteq\mathbb R \to \mathbb R$ defined on the same domain $D$. We write $f(x) = O(g(x))$ as $x \to \infty$ ($x \to 0$) if there exists $C>0$ such that $|f(x)| \le C |g(x)|$ for large (small) $x \in D$. If $f(x) = O(g(x))$ and $g(x) = O(f(x))$ as $x \to \infty$ ($x \to 0$), we denote this by $f(x) \sim g(x)$ as $x \to \infty$ ($x \to 0$).

    \item Similarly, $f(x) \lesssim g(x)$ means that there exists $C>0$ such that $f(x) \le C g(x)$ for large $x \in D$. Note that the sign matters: for instance, if $f(x)=x$ and $g(x)=-x^2$, then $f(x) \not\lesssim g(x)$, while $f(x) = O(g(x))$ as $x \to \infty$.

    \item For a continuous-time Markov chain $\bar X$, let $T_k := \inf\{t \ge T_{k-1} : \bar X(t) \neq \bar X(T_{k-1})\}$ be the $k$th jump time with $T_0 = 0$. The \emph{explosion time} is defined as $T := \lim_{k \to \infty} T_k$.

    \item For each $x \in \bar \S$, the \emph{first return time} to $x$ is $\tau_x := \inf\{t \ge T_1 : \bar X(t) = x\}$.

    \item For two probability measures $\mu$ and $\nu$ on a discrete space $\bar \S$, the \emph{total variation distance} is $\|\mu - \nu\| := \sup_{A \subseteq \bar \S} |\mu(A) - \nu(A)|$.

    \item The \emph{embedded Markov chain} $\hat X$ of a continuous-time Markov chain $\bar X$ on $\bar \S$ is the discrete-time chain defined by $\hat X_k = \bar X(T_k)$ for each $k$.
\end{enumerate}

Next, we introduce the mean and variance of the transitions \cite{ lo2024strong, XuHansenWiuf2022}, which will be used to formulate our main criteria.
\begin{defn}\label{defn:m and v}
    For a Markov chain $X$ defined as in Section \ref{sec:main model}, the mean transition (drift) and the variance of the transitions are
    \begin{align*}
        m(x) = \sum_{\eta} \eta \, \lambda_\eta(x), \quad 
        v(x) = \frac{1}{2} \sum_{\eta} \eta^2 \, \lambda_\eta(x).
    \end{align*}
\end{defn}

We also introduce the following functions to express the classification criteria:
\begin{defn} $H_p(x) := \dfrac{(\log x)(m(x)x - p v(x))}{v(x)}, \quad \text{and} \quad 
        J(x) := \dfrac{m(x)x}{v(x)}$, for some constant $p$.
\end{defn}
Note that these functions are exactly the same as \eqref{eq: h}.
\subsection{Dynamical features of CTMC}

The main goal of this paper is to provide sufficient conditions for various dynamical properties of a continuous-time Markov chain defined in Section \ref{sec:main model}. 
The dynamical properties we consider include explosivity, non-explosivity, transience, recurrence, null-recurrence, positive recurrence, non-exponential ergodicity, and exponential ergodicity. 
While these properties are elementary, we provide precise definitions below.

\begin{defn}
    Let $\bar X$ be an irreducible continuous-time Markov chain on $\bar{\mathbb S}$. Then 
    \begin{enumerate}
         \item $\bar X$ is explosive if $P(T<\infty)>0$. Otherwise, $\bar X$ is non-explosive.
         \item $\bar X$ is recurrent if $P_x(\tau_x<\infty)=1$ for some $x \in \bar{\mathbb S}$. Otherwise, $\bar X$ is transient.
         \item $\bar X$ is positive recurrent if $\mathbb E_x(\tau_x)<\infty$ for some $x \in \bar{\mathbb S}$. If $\bar X$ is recurrent but not positive recurrent, it is null-recurrent. 
         \item $\bar X$ is exponentially ergodic if it is ergodic and there exist constants $C(x)>0$ and $\eta>0$ such that
         \begin{align*}
             \Vert P_x(\bar X(t)\in \cdot)-\pi(\cdot)\Vert_{TV}\le C(x)e^{-\eta t} \quad \text{for all $\bar X(0)=x$.}
         \end{align*}
         If $\bar X$ is ergodic but not exponentially ergodic, it is non-exponentially ergodic. 
    \end{enumerate}
\end{defn}

These dynamical features imply important statistical properties of $\bar X$. 
For example, for an irreducible $\bar X$ on a countable state space, $\bar X$ is positive recurrent if and only if there exists a unique \textit{stationary probability distribution} $\pi$.

\begin{defn}
    For a continuous-time Markov chain $\bar X$ on a countable state space $\bar \S$, $\pi$ is a stationary measure if 
    \[
        P_\pi(\bar X(t)=x):=\sum_{y} P_y(\bar X(t)=x)\pi(y)=\pi(x)
    \]
    for all $x\in \bar{\mathbb S}$ and $t\ge0$. 
    If $\sum_{x}\pi(x)=1$, then $\pi$ is a stationary distribution. 
\end{defn}

\noindent The one-step analysis shows that a stationary measure $\pi$ is a steady solution of the \textit{master equation}:
\begin{align*}
    \sum_{y}q_{y,x}\pi(y)-\sum_y q_{x,y} \pi(x)=0 \quad \text{for each $x \in \bar \S$}
\end{align*}

\noindent Furthermore, an irreducible, positive recurrent $\bar X$ on $\bar{\mathbb S}$ is \textit{ergodic} \cite{norris1997jr}, meaning that
\[
    \lim_{t \to \infty} \Vert P_x(\bar X(t)\in \cdot)-\pi(\cdot)\Vert_{tv}=0 \quad \text{for any $\bar X(0)=x$}.
\]

\subsection{Lyapunov conditions for the dynamical features}

We present here the Lyapunov functions used to derive our main results. 
Foster-Lyapunov criteria \cite{MT-LyaFosterI, MT-LyaFosterII, MT-LyaFosterIII} imply that if a function $f:\bar \S\to \mathbb R_{\ge 0}$ catches a sufficiently negative mean drift of $f(\bar X)$, then $\bar X$ possesses certain dynamical properties. Negative drifts of $f(\bar X)$ can be captured via $\mathcal A f$. 
As introduced in \eqref{eq:generator}, the generator acts on a function suitable function $f$, that is, $f \in \text{Dom}_+(\A)$ as
\begin{align*}
    \A f(x) = \sum_{y \in \mathbb S} {q_{x,y}}f(y) \quad \text{for each $x \in \bar \S$.}
\end{align*}
 Dom$_+(\A)$ is a collection of positive functions such that $\sum_{y\neq x}q_{x,y}|f(y)|< \infty$ for each $x \in \bar \S$ \cite[Section 1.2]{menshikov2014explosion}. In case that $X$ satisfies Assumption \ref{basic assumption}, every positive real-valued function defined on the state space $\S$ belongs to $\text{Dom}_+(\A)$, and we have that 
\begin{align*}
    \A f(x) = \sum_{\eta \in \Gamma} \lambda_{\eta}(x)(f(x+\eta)-f(x)). 
\end{align*} 

As our main result, we derive sufficient conditions for the dynamical properties 
of continuous-time Markov chains $X$ on $\mathbb Z_{\ge 0}$ using Lyapunov functions. These conditions are expressed solely in terms of the mean drift $m(x)$ and variance $v(x)$ defined in Definition \ref{defn:m and v}. 
We consider two types of Foster-Lyapunov functions: $(i)$ $x^p(\log x)^q$ and $(ii)$ $e^{px}$.
The infinitesimal generators acting on these functions can be represented in terms of $m(x)$ and $v(x)$ as follows:

\begin{prop}\label{prop: generators}
   Let $f$ be a function defined on $\S \subseteq\mathbb Z_{\ge 0}$. 
    \begin{enumerate}
        \item If $f(x) = x^p (\log x)^q \mathbbm{1}_{\{x > 1\}}+\mathbbm{1}_{\{x\le 1\}}$, then as $x \to \infty$, 
        \begin{align*}
            &\A f(x) = p x^{p-2}(\log x)^q \left\{m(x)x + (p-1)v(x)+O\left(\frac{v(x)}{x}\right)\right\}  \\ 
            & \quad  + q x^{p-2}(\log x)^{q-1}\left\{m(x) x + (2p-1)v(x) + (q-1)\frac{v(x)}{\log x}+O\left(\frac{v(x)}{x}\right)\right\}
        \end{align*}
        \item If $f(x) = e^{px}$, then for $x \in \mathbb S$,
        \begin{align*}
        e^{p x} \{pm(x) + p^2 v(x) + O(p ^3)O( v(x))\} \quad \text{as} \quad p \to 0,\ x\to \infty.
    \end{align*}
     \end{enumerate}
\end{prop}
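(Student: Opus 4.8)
The plan is to expand each increment $f(x+\eta)-f(x)$ by Taylor's theorem to second order with an explicit Lagrange remainder, and then collapse the resulting weighted sums using the moment identities $\sum_{\eta}\eta\lambda_\eta(x)=m(x)$ and $\frac12\sum_{\eta}\eta^2\lambda_\eta(x)=v(x)$ from Definition~\ref{defn:m and v}, together with the fact that, since $\Gamma$ is finite (Assumption~\ref{basic assumption}(a)), every cubic moment is dominated by the variance: with $C:=\max_{\eta\in\Gamma}|\eta|$ one has $\sum_{\eta\in\Gamma}|\eta|^3\lambda_\eta(x)\le C\sum_{\eta\in\Gamma}\eta^2\lambda_\eta(x)=2Cv(x)$ for every $x$.

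\textbf{Part (1).} I would take $f(x)=x^p(\log x)^q$ on $x>1$ (for $x>C+1$ all of the $x+\eta$, $\eta\in\Gamma$, exceed $1$, so the indicator truncation in the definition of $f$ is irrelevant and $f$ is locally $C^3$ near $x$) and differentiate directly to get
\[
f'(x)=x^{p-1}(\log x)^{q-1}\bigl(p\log x+q\bigr),\qquad f'''(t)=O\bigl(t^{p-3}(\log t)^{q}\bigr)\ \text{as } t\to\infty,
\]
\[
f''(x)=p(p-1)x^{p-2}(\log x)^{q}+q(2p-1)x^{p-2}(\log x)^{q-1}+q(q-1)x^{p-2}(\log x)^{q-2}.
\]
Taylor's theorem gives $f(x+\eta)-f(x)=\eta f'(x)+\frac{\eta^2}{2}f''(x)+\frac{\eta^3}{6}f'''(\xi_{x,\eta})$ with $\xi_{x,\eta}\in(x-C,x+C)$, and since $f'''$ is eventually of order $t^{p-3}(\log t)^{q}$ on that interval, $f'''(\xi_{x,\eta})=O(x^{p-3}(\log x)^{q})$ uniformly in $\eta$. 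Multiplying by $\lambda_\eta(x)$, summing over $\eta$, and applying the moment identities then yields $\A f(x)=m(x)f'(x)+v(x)f''(x)+O(v(x)x^{p-3}(\log x)^{q})$. The last step is pure bookkeeping: substitute the formulas for $f'$ and $f''$, collect the terms sharing the prefactor $px^{p-2}(\log x)^{q}$ into the first brace and those sharing $qx^{p-2}(\log x)^{q-1}$ into the second, note that $q(q-1)x^{p-2}(\log x)^{q-2}v(x)$ equals $(q-1)\frac{v(x)}{\log x}$ times the latter prefactor, and observe that the Lagrange error divided by either prefactor is $O(v(x)/x)$.

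\textbf{Part (2).} Here $f(x)=e^{px}$ gives the exact identity $\A f(x)=e^{px}\sum_{\eta\in\Gamma}\lambda_\eta(x)(e^{p\eta}-1)$, so I would Taylor-expand $e^{p\eta}-1=p\eta+\frac{p^2\eta^2}{2}+\frac{p^3\eta^3}{6}e^{\theta_{p,\eta}}$ with $\theta_{p,\eta}$ between $0$ and $p\eta$; because $|\eta|\le C$ and $p\to0$, $e^{\theta_{p,\eta}}$ is bounded uniformly in $x$ and $\eta$. Summing against $\lambda_\eta(x)$ and using $\sum_{\eta}|\eta|^3\lambda_\eta(x)\le 2Cv(x)$ once more gives $\A f(x)=e^{px}\{pm(x)+p^2v(x)+O(p^3)O(v(x))\}$.

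\textbf{Main obstacle.} None of this is deep; the one delicate point is the uniform control of the Lagrange remainder in Part~(1) — bounding $f'''$ over the entire interval $(x-C,x+C)$ rather than merely at $x$, and uniformly over the finitely many $\eta\in\Gamma$ — which is precisely what licenses absorbing the error into the two braces as $O(v(x)/x)$; keeping track of which power of $\log x$ dominates in the remainder when $q<0$ is the other thing to watch.
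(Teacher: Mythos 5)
Your proposal is correct and follows essentially the same route as the paper: a second-order Taylor expansion with Lagrange remainder for $x^p(\log x)^q$, the exact identity $\A f(x)=e^{px}\sum_{\eta}(e^{p\eta}-1)\lambda_\eta(x)$ for the exponential case, and the finite-jump bound $\sum_{\eta}|\eta|^3\lambda_\eta(x)\lesssim v(x)$ to absorb the cubic remainder. The only cosmetic difference is that the paper splits the third-derivative remainder into its $p$-proportional part of order $x^{p-3}(\log x)^q$ and its $q$-proportional part of order $x^{p-3}(\log x)^{q-1}$ so that each piece lands cleanly as $O(v(x)/x)$ inside its own brace, whereas your single coarse bound $f'''=O(t^{p-3}(\log t)^q)$ should be absorbed into the first brace when $p\neq 0$ (when $p=0$ the remainder is genuinely of order $(\log x)^{q-1}$ and goes into the second); the conclusion is unaffected.
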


The following corollary provides a convenient consequence of this expansion.

\begin{cor}\label{cor: for the sup assumption}
    For an irreducible continuous-time Markov chain on $\mathbb S\subseteq \mathbb Z_{\ge 0}$ satisfying assumption \ref{basic assumption}, 
    \begin{enumerate}
        \item if $\dlimsup_{x\to \infty}H_1(x)<1$, then there exists a positive function $f$ on $\mathbb S$ and $\epsilon>0$ such that $\lim_{x \to \infty}f(x) = \infty$, $\A f(x) < 0$ for all large $x$, and  
    \begin{align}\label{eq: limsup1}
        \dlimsup_{x\to \infty}\mathcal A f(x) \le -\left(\frac{\epsilon}{2} \right) ^2\liminf_{x\to \infty}\frac{v(x)}{x^2 (\log x)^{2-(\epsilon/2)}} \le 0
    \end{align}
    \item if $\dliminf_{x\to \infty}H_1(x)>1$, then there exists a positive function $f$ on $\mathbb S$ and $\epsilon>0$ such that $\lim_{x \to \infty}f(x) = 0$, $\A f(x) < 0$ for all large $x$, and  
    \begin{align}\label{eq: limsup2}
        \dlimsup_{x\to \infty}\mathcal A f(x) \le -\left(\frac{\epsilon}{2} \right) ^2\dliminf_{x\to \infty}\frac{v(x)}{x^2 (\log x)^{2+(\epsilon/2)}} \le 0
    \end{align}
    \end{enumerate}
\end{cor}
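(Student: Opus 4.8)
The plan is to read off the claimed Lyapunov function from Proposition~\ref{prop: generators}(1) by choosing the exponents $(p,q)$ correctly, and then to verify that the leading term of $\A f$ is strictly negative with the quantitative bound stated in \eqref{eq: limsup1}--\eqref{eq: limsup2}. First I would treat part (1). By definition $H_1(x) = (\log x)(m(x)x - v(x))/v(x)$, so the hypothesis $\limsup_{x\to\infty} H_1(x) < 1$ means there is $\epsilon > 0$ with $H_1(x) \le 1 - \epsilon$ for all large $x$, i.e.
\begin{align*}
    m(x)x - v(x) \le (1-\epsilon)\,\frac{v(x)}{\log x} \quad \text{for all large } x,
\end{align*}
equivalently $m(x)x \le v(x) + (1-\epsilon)\dfrac{v(x)}{\log x}$. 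I would then set $f(x) = x\,(\log x)^{q}\mathbbm 1_{\{x>1\}} + \mathbbm 1_{\{x\le 1\}}$, that is $p = 1$, with $q = \epsilon/2 \in (0,1)$. Since $p = 1$ and $q > 0$, $f$ is positive and increases to infinity. Plugging $p=1$ into Proposition~\ref{prop: generators}(1), the first brace has its $(p-1)v(x)$ term vanish, so
\begin{align*}
    \A f(x) = x^{-1}(\log x)^{q}\Bigl\{ m(x)x + O\!\Bigl(\tfrac{v(x)}{x}\Bigr)\Bigr\} + q\, x^{-1}(\log x)^{q-1}\Bigl\{ m(x)x + v(x) + (q-1)\tfrac{v(x)}{\log x} + O\!\Bigl(\tfrac{v(x)}{x}\Bigr)\Bigr\}.
\end{align*}

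Next I would combine the two groups over a common factor $x^{-1}(\log x)^{q-1}$ and substitute the bound on $m(x)x$. The bracketed total becomes, up to the $O(v(x)/x)$ errors,
\begin{align*}
    (\log x)\bigl(m(x)x\bigr) + q\bigl(m(x)x + v(x)\bigr) + q(q-1)\tfrac{v(x)}{\log x}
    \le (\log x)\,m(x)x + q\Bigl(v(x) + (1-\epsilon)\tfrac{v(x)}{\log x} + v(x)\Bigr) + q(q-1)\tfrac{v(x)}{\log x}.
\end{align*}
Here the dominant balance is between $(\log x)\,m(x)x$ and the $2q\,v(x)$-type terms: using $m(x)x \le v(x) + (1-\epsilon)v(x)/\log x$ once more on the first summand gives $(\log x)m(x)x \le (\log x)v(x) + (1-\epsilon)v(x)$, and one sees the $(\log x)v(x)$ term is cancelled only after regrouping — so in fact the right bookkeeping is to write $m(x)x = v(x)\cdot\tfrac{J(x)}{1}$ differently; cleaner is to keep $(\log x)(m(x)x - v(x)) \le (1-\epsilon)v(x)/\log x$ and expand $\A f(x)\cdot x (\log x)^{1-q}$ as $(\log x)(m(x)x - v(x)) + (\log x)v(x) + q(m(x)x+v(x)) + q(q-1)v(x)/\log x + O(v(x)/x)$; but the $(\log x)v(x)$ term is of order $v(x)\log x$, which is the genuinely largest term, so for $\A f$ to be negative one actually needs the $(\log x)v(x)$ to be absorbed, meaning the correct regrouping pairs $(\log x)(m(x)x + (p-1)v(x))$ from the first brace — and since $p=1$ that parenthesis is just $(\log x)m(x)x$, with no spurious $(\log x)v(x)$. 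Re-examining: the first brace contributes $(\log x)\cdot m(x)x$ (times $x^{-1}(\log x)^{q-1}$), not $(\log x)(m(x)x - v(x))$; the $-v(x)\log x$ needed to form $H_1$ comes from the $q\cdot(2p-1)v(x) = q v(x)$ piece only at order $v(x)$, so the leading order is $(\log x)m(x)x$ which by hypothesis is $\le (\log x)v(x)(1 + o(1))$ — I must therefore instead choose $p$ slightly \emph{less} than $1$. I would set $p = 1 - \epsilon/2$: then the first brace is $(\log x)(m(x)x + (p-1)v(x)) = (\log x)(m(x)x - (\epsilon/2)v(x))$, and combined with $q=0$ (so $f(x) = x^{p}$, no log factor, still $\to\infty$) the leading term of $\A f(x)$ is $p\,x^{p-2}(\log x)^{0}\{m(x)x - (\epsilon/2)v(x) + O(v(x)/x)\}$; wait, that has no $\log$. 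The honest route: take $q = \epsilon/2$, $p = 1$, form $x(\log x)^{1-q}\A f(x)$, and note the leading term is $(\log x)(m(x)x) + q((2p-1)v(x) + (p-1)v(x)\log x/\log x \cdots)$ — the $(p-1)v(x)\log x$ vanishes at $p=1$ — so it equals $(\log x)(m(x)x) + q\cdot v(x) + o(v(x))$, and since $(\log x)m(x)x - (\log x)v(x) \le (1-\epsilon)v(x)$ we get $(\log x)m(x)x \le (\log x)v(x) + (1-\epsilon)v(x)$; the $(\log x)v(x)$ is not controlled. The resolution the authors intend is surely $f(x) = x(\log x)^{-q}$ with $q = \epsilon/2$, i.e. a \emph{negative} exponent on the log, so $q \mapsto -\epsilon/2$ in Proposition~\ref{prop: generators}(1): then $f$ still $\to\infty$ (since $x^{1}$ dominates $(\log x)^{-\epsilon/2}$), and the coefficient of the second group flips sign, turning $+q v(x)$ into $-(\epsilon/2)v(x)$ and producing, after dividing by $x^{-1}(\log x)^{-q-1}$, the quantity $(\log x)(m(x)x - v(x)) + \text{(controlled)} - (\epsilon/2)v(x) \le (1-\epsilon)\tfrac{v(x)}{\log x} - (\epsilon/2)v(x) + \cdots < 0$, with the dominant negative term $-(\epsilon/2)v(x)(\log x)^{-q-1} = -(\epsilon/2)v(x)(\log x)^{-1+\epsilon/2}$ times $p\,x^{-1}$; squaring-type bookkeeping of the two first-order terms then yields the $(\epsilon/2)^2$ prefactor and the exponent $2 - \epsilon/2$ in \eqref{eq: limsup1}. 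So concretely: $f(x) = x(\log x)^{-\epsilon/2}\mathbbm 1_{\{x>1\}} + \mathbbm 1_{\{x \le 1\}}$, verify $f\to\infty$, substitute $p=1$, $q=-\epsilon/2$ into Proposition~\ref{prop: generators}(1), substitute $H_1(x)\le 1-\epsilon$, and collect.

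Part (2) is the mirror image: $\liminf H_1(x) > 1$ gives $\epsilon>0$ with $m(x)x - v(x) \ge (1+\epsilon)v(x)/\log x$ for large $x$, and I would take $f(x) = x^{-1}(\log x)^{q}$ type — more precisely a \emph{bounded} positive function decreasing to $0$, namely $f(x) = (\log x)^{-\epsilon/2}$ scaled, or following the same mechanism $f(x) = x^{p}(\log x)^{q}$ with $p$ a small positive number and the exponents tuned so that $f\to 0$; the sign of $m(x)x$ now being large and positive makes $\A f < 0$ for a \emph{decreasing} $f$. The algebra is identical up to signs, producing the exponent $2 + \epsilon/2$ in \eqref{eq: limsup2}. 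In both parts the remaining point is to check that the $O(v(x)/x)$ error terms in Proposition~\ref{prop: generators}(1) are genuinely negligible against the surviving $v(x)(\log x)^{\pm 1 \mp \epsilon/2}$ term — immediate since $(\log x)^{\pm 1}/x \to 0$ — and to pass from the pointwise inequality to the $\limsup$ bound by dividing through by the positive prefactor and taking $\liminf$ of $v(x)/(x^2(\log x)^{\ldots})$.

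The main obstacle, as the flailing above shows, is purely organizational: getting the exponent pair $(p,q)$ and the sign of $q$ right so that (i) $f$ has the required monotone behavior to $\infty$ or $0$, and (ii) the leading term of $\A f$ is exactly $(\log x)(m(x)x - v(x))$ plus a strictly-negative $-(\epsilon/2)v(x)/(\log x)$-order correction, so that the hypothesis $H_1 \lessgtr 1\mp\epsilon$ closes the estimate. Once the right test function $f(x) = x(\log x)^{\mp\epsilon/2}$ (up to the indicator at $x\le 1$) is fixed, everything reduces to a one-line substitution into Proposition~\ref{prop: generators}(1) followed by routine $O(\cdot)$ bookkeeping; no further analytic input is needed.
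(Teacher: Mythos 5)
Your final choice of test function for part (1), $f(x)=x(\log x)^{-\epsilon/2}$, does not work, and this is a genuine gap rather than a bookkeeping issue. The hypothesis $\limsup_{x\to\infty}H_1(x)<1$ only controls the \emph{difference} $m(x)x-v(x)$; it allows $m(x)x$ itself to be as large as $v(x)(1+o(1))$. In Proposition~\ref{prop: generators}(1) the coefficient of $v(x)$ sitting next to $m(x)x$ in the dominant ($p$-)brace is $p-1$, so the combination $m(x)x-v(x)$ can only appear at leading order if $p-1=-1$, i.e.\ $p=0$. With $p=1$ the $p$-group survives at order $x^{-1}(\log x)^{q}\,m(x)x$ with \emph{no} compensating $-v(x)$, and it dominates the $q$-group (which lives at order $(\log x)^{q-1}$) by a factor of $\log x$. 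Concretely, for a chain with $m(x)x=v(x)$ identically one has $H_1\equiv 0<1$, yet your $f$ gives $\A f(x)\sim v(x)x^{-2}(\log x)^{-\epsilon/2}>0$ for large $x$. Your intermediate attempts ($p=1,q=\epsilon/2$ and $p=1-\epsilon/2,q=0$) fail for the same reason, as your own computation of the uncancelled $(\log x)v(x)$ term already indicates; the claim that switching to $q=-\epsilon/2$ "produces $(\log x)(m(x)x-v(x))$" is not correct — the $-(\log x)v(x)$ never materializes when $p=1$.

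The resolution is to take $p=0$: the paper uses $f(x)=(\log x)^{\epsilon/2}\mathbbm 1_{\{x>1\}}+\mathbbm 1_{\{x\le 1\}}$ for part (1) and $f(x)=(\log x)^{-\epsilon/2}\mathbbm 1_{\{x>1\}}+\mathbbm 1_{\{x\le 1\}}$ for part (2), where $\epsilon$ is chosen so that $H_1(x)\le 1-\epsilon$ (resp.\ $\ge 1+\epsilon$) for large $x$. With $p=0$ the first group vanishes entirely, and the surviving $q$-group has $v(x)$-coefficient $2p-1=-1$, so
\begin{align*}
\A f(x)=q\,x^{-2}(\log x)^{q-2}v(x)\left\{H_1(x)+(q-1)+O\!\left(\frac{\log x}{x}\right)\right\},
\end{align*}
and with $q=\pm\epsilon/2$ the brace is $\le -\epsilon/2+o(1)$ (resp.\ $\ge \epsilon/2+o(1)$, against the negative prefactor $q=-\epsilon/2$), yielding both the sign of $\A f$ and the $(\epsilon/2)^2$ prefactor and exponents $2\mp\epsilon/2$ in \eqref{eq: limsup1}--\eqref{eq: limsup2}. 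Note also that $(\log x)^{\epsilon/2}\to\infty$, so no power of $x$ is needed for part (1), and for part (2) the requirement $f\to 0$ rules out any $f=x^{p}(\log x)^{q}$ with $p>0$, which your sketch for that case also overlooks. The rest of your outline (using the $H_1$ bound inside the brace, discarding the $O(v(x)/x)$ errors, and passing to the $\limsup$) is fine once the correct $f$ is in place.
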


See Appendix~\ref{sec : generator-expansion} for  proofs of Proposition \ref{prop: generators} and Corollary \ref{cor: for the sup assumption}.

\section{Main results}\label{sec: main}
The main sufficient conditions for various dynamical features of one-dimensional continuous-time Markov chains can be expressed in terms of the asymptotic behaviors of the drift mean $m(x)$ and variance $v(x)$ introduced in Definition~\ref{defn:m and v}.
The results are summarized below.
\begin{thm}\label{thm:main}
    Let $X$ be an irreducible continuous-time Markov chain on $\mathbb S \subseteq \mathbb Z_{\ge 0}$ satisfying Assumption~\ref{basic assumption}.
Let $m(x)$ and $v(x)$ denote the drift mean and variance of $X$ as defined in Definition~\ref{defn:m and v}.
Then the following hold:
    \begin{enumerate}
        \item $X$ is explosive if
        \begin{enumerate}
            \item $x\cdot m(x)-v(x) \gtrsim x^c$ for some $c>2$, and
            \item   $\dliminf_{x \to \infty}H_1(x) >1$,
            \item  $\dliminf_{x \to \infty}\frac{v(x)}{x^2 (\log x)^q} = 0$ implies $\dlim_{x \to \infty}\frac{v(x)}{x^2 (\log x)^q} = 0$ for any $q>2$. 
        \end{enumerate}
            \item $X$ is non- explosive if either
           \begin{enumerate}
               \item  $x\cdot m(x)-v(x)\lesssim x^2\log x$ and $v(x) \lesssim x^2(\log x)^2$, or \item  $\dlimsup_{x \to \infty}H_1(x)<1$.
           \end{enumerate} 
           \item $X$ is transient if $\dliminf_{x\to \infty}H_1(x)>1$.
            \item $X$ is recurrent if $\dlimsup_{x\to \infty} H_1(x)<1$,
            \item $X$ is positive recurrent if either
            \begin{enumerate}
                \item $v(x) \gtrsim x^c$ for some $c \le 2$ and  $
        \dlimsup_{x \to \infty}J(x) <(c-1)$, or
    \item $v(x) \gtrsim x^c$ for some $c>2$ and $
        \dlimsup_{x\to \infty} H_1(x)<1$.
            \end{enumerate}
            \item When $X$ is recurrent, $X$ is null recurrent if there exists a constant $0<c\le 2$ such that $v(x) \lesssim x^{c}$ and either 
    \begin{enumerate}
        \item
         $c-1 < \dliminf_{x \to \infty} J(x) \le \dlimsup_{x \to \infty} J(x)< \infty$, or
    \item 
   $-1 < \dliminf_{x\to \infty} H_{c-1}(x)\le \dlimsup_{x\to \infty} H_{c-1}(x)< \infty$
    \end{enumerate}

\item 
Assume that $X$ is positive recurrent. $X$ is exponentially ergodic if either 
    \begin{enumerate}
        \item there exists a constant $c>2$ such that $v(x) \gtrsim x^{c}$ and 
       $ \dlimsup_{x\to \infty} H_1(x)<1$,
        \item  $v(x) \sim x^2$ and $\dlimsup_{x \to \infty} J(x)<1$, or
        \item $1 \lesssim v(x)$ and $\dlimsup_{x \to \infty} J(x)/x<0$.
    \end{enumerate}

\item Assume that $X$ is positive recurrent.
$X$ is non-exponentially ergodic if either
        \begin{enumerate}
            \item $v(x) \lesssim x^c$ for some $c<2$ and $-\infty  < \dliminf_{x \to \infty}J(x) \le \dlimsup_{x \to \infty}J(x) < \infty$, or
            \item $v(x) \lesssim x^c$ for some $c <0$.
        \end{enumerate}   
    \end{enumerate}
\end{thm}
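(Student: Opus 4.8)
The proof is organized by dynamical feature; in each case I pair the Lyapunov/Foster criterion recorded in Section~\ref{sec : prelim} (in particular Proposition~\ref{thm: hairer}, together with its counterparts for positive recurrence, geometric ergodicity, explosivity, and their converses) with a Lyapunov function from one of the two families $f(x)=x^{p}(\log x)^{q}$ or $f(x)=e^{px}$. For such $f$, Proposition~\ref{prop: generators} expresses $\A f$ through $m(x)$ and $v(x)$; substituting $m(x)x=J(x)v(x)$ and $(\log x)(J(x)-p)=H_{p}(x)$ turns the hypotheses — all phrased via $H_{p}$, $J$, $m$, $v$ — into exactly the sign/growth statement about $\A f$ that the criterion requires. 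Corollary~\ref{cor: for the sup assumption} already packages the translations of $\limsup H_{1}<1$ and $\liminf H_{1}>1$, together with quantitative lower bounds on $|\A f|$ in terms of $v(x)/(x^{2}(\log x)^{2\mp\epsilon/2})$, and these are invoked repeatedly.

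For the ``sufficient'' directions this is largely bookkeeping. Recurrence and transience (parts (3)--(4)) are immediate from Corollary~\ref{cor: for the sup assumption}: with $f=(\log x)^{q}$ (the $p=0$ case) and a small $q>0$ one gets a norm-like $f$ with $\A f<0$ far out; a small $q<0$ gives a bounded $f$ decreasing to $0$ with $\A f<0$ far out. Non-explosivity (part (2)) follows from recurrence in case (b), and in case (a) one takes $f=\log x$ (the $p=0$, $q=1$ case), for which Proposition~\ref{prop: generators} gives $\A f(x)=\frac{m(x)x-v(x)}{x^{2}}+O\!\big(\frac{v(x)}{x^{3}}\big)\lesssim\log x\lesssim f(x)$ under the two stated bounds, i.e. the drift inequality $\A f\le cf+d$. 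For positive recurrence (part (5)), case (a) takes $f=x^{p}$ with $p\in\big((2-c)\vee 0,\ 1-\limsup J\big)$, an interval that is nonempty precisely because $\limsup J<c-1$, so that $\A f=px^{p-2}v(x)(J(x)+p-1+o(1))\to-\infty$; case (b) reuses Corollary~\ref{cor: for the sup assumption}(1), since $v(x)\gtrsim x^{c}$ with $c>2$ forces $\liminf v(x)/(x^{2}(\log x)^{2-\epsilon/2})=\infty$. Exponential ergodicity (part (7)) is the geometric drift $\A f\le-\epsilon f$ outside a finite set: case (b) uses $f=x^{p}$ with $0<p<1-\limsup J$ and $v\sim x^{2}$; case (c) uses $f=e^{px}$ with $p$ small, together with the reformulation of $\limsup J(x)/x<0$ as $m(x)\le-\delta v(x)$ with $v(x)\gtrsim1$, plugged into Proposition~\ref{prop: generators}(2); case (a) reuses Corollary~\ref{cor: for the sup assumption}(1) with $f=(\log x)^{q}$, for which $\A f/f\lesssim-x^{c-2}/(\log x)^{2}\to-\infty$, a fortiori a geometric drift.

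The substantive work is in the two ``converse'' statements and in explosivity. For null recurrence (part (6)), recurrence is assumed, so it suffices to rule out positive recurrence; I would do so by a converse-Foster / optional-stopping argument establishing $\mathbb E_{x}[\tau_{x}]=\infty$, after building a norm-like $f$ with $\A f(x)\ge0$ past a finite set: $f=x^{p}$ (whence $\A f\propto J(x)+p-1$) for case (a), or $f=x^{2-c}(\log x)^{q}$ with $q$ large enough that the leading term $(2-c)H_{c-1}(x)+q(J(x)+3-2c)+o(1)$ of $\A f$ is eventually positive, for case (b). The lower bounds $\liminf J>c-1$ and $\liminf H_{c-1}>-1$ are exactly what let such an $f$ exist, while $v(x)\lesssim x^{c}$, $\limsup J<\infty$, and $\limsup H_{c-1}<\infty$ keep the associated (sub)martingale controlled, and Assumption~\ref{basic assumption} bounds the return overshoot. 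For non-exponential ergodicity (part (8)), given positive recurrence I would show the return time to a finite set has no exponential moment: in case (b), $c<0$ forces $v(x)\to0$, hence the total jump rate $R(x)\asymp v(x)\to0$, so a holding time at any state with $R(x)<\epsilon$ has infinite $\epsilon$-exponential moment and irreducibility yields $\mathbb E_{x_{0}}[e^{\epsilon\tau_{x_{0}}}]=\infty$ for all $\epsilon>0$; in case (a), $v(x)\lesssim x^{c}$ with $c<2$ and $J$ bounded on both sides force the return time from level $x$ to be $\gtrsim x^{2-c}$ while the probability of reaching level $x$ from $x_{0}$ decays only polynomially, and summing $x^{-K}e^{\epsilon x^{2-c}}$ gives the infinite moment. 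Finally, for explosivity (part (1)) I would combine transience (from (b)) with Corollary~\ref{cor: for the sup assumption}(2): condition (c) is a non-oscillation safeguard on $v$ that permits the dichotomy ``$v(x)\gtrsim x^{2}(\log x)^{2+\epsilon/2}$, so that the Corollary bound is bounded away from $0$'' — whence transience plus $\A f\le-\delta$ forces a finite explosion time via a Dubins--Schwarz argument on the local martingale $f(\bar X_{t})-\int_{0}^{t}\A f\,ds$ — versus ``$v$ genuinely small'', where (a) (which in any case forces $v(x)\gtrsim x^{c-1}$ with $c-1>1$) supplies enough diffusive drift to run the same argument with a bounded $f$ satisfying $\A f\ge\alpha>0$.

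I expect the converse and explosivity parts to be the main obstacle: they demand genuine lower bounds (on hitting probabilities, return times, and the quadratic variation of the martingale above), the two-sided hypotheses cannot be dropped, and the exact interaction of (c) with transience and the growth of $v$ in the explosivity proof is delicate. By comparison, matching the exponents $p,q$ to the hypotheses in the boundary cases ($c=2$; $\liminf J$ possibly negative; $\limsup J$ equal to $1$), checking non-emptiness of the relevant parameter intervals, and verifying that each Lyapunov function lies in $\mathrm{Dom}_{+}(\A)$ with $\A f$ finite (automatic under Assumption~\ref{basic assumption}) are routine.
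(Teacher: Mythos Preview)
Your treatment of parts (2)--(5), (7), and (8)(b) matches the paper's: same Lyapunov families, same generator expansions via Proposition~\ref{prop: generators}, only cosmetic differences in how the exponents are parameterised.

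The genuine gaps are exactly in the three ``converse/hard'' parts you flag, and they are structural, not bookkeeping.

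\textbf{Null recurrence (part 6).} A single norm-like $f$ with $\A f\ge0$ outside a finite set does \emph{not} by itself yield $E_x[\tau_x]=\infty$: the resulting submartingale inequality controls $f(X_{\tau_A})$, not $\tau_A$. The paper instead invokes a two-function criterion (Proposition~\ref{prop: null} and Corollary~\ref{cor: null}, taken from \cite{XuHansenWiuf2022}) which shows the stationary measure is infinite: one needs $f_1\to\infty$ with $\A f_1\ge0$ for large $x$, \emph{together with} a faster-growing $f_2$ (so $f_2/f_1\to\infty$) satisfying $\A f_2\lesssim1$. The upper bounds $\limsup J<\infty$ and $\limsup H_{c-1}<\infty$ enter precisely to secure $\A f_2\lesssim1$, not merely to ``keep the submartingale controlled.'' The paper also splits into $c=2$ (using $f_i=(\log x)^{q_i}$) versus $c<2$ (using $f_i=x^{p_i}$ under (a), or $f_i=x^{2-c}(\log x)^{q_i}$ under (b)); your single-function sketch does not recover any of this.

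\textbf{Non-exponential ergodicity (part 8(a)).} Your heuristic ``return time from level $x$ is $\gtrsim x^{2-c}$ while the hitting probability decays only polynomially'' would itself need a Lyapunov argument to justify. The paper instead applies a moment criterion (Proposition~\ref{prop:non-exponential}, from \cite{menshikov2014explosion}): with $f=x^{2-c}$ one checks $\A f\gtrsim-1$, $\A f^r\lesssim f^{r-1}$ for arbitrary $r>1$, and $\A f^k\ge0$ for some large $k$ (this is where $\liminf J>-\infty$ is used), and the criterion delivers $E_x[\tau_A^\ell]=\infty$ directly.

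\textbf{Explosivity (part 1).} No Dubins--Schwarz or quadratic-variation argument is needed. The paper simply applies the cited criterion (Proposition~\ref{thm:explosion_ref}): a positive $f$ with $\lim_{x\to\infty}f(x)=0$ and $\A f\le-\delta$ outside a finite set forces $E[T]<\infty$. With $f=(\log x)^{-\epsilon/2}$ throughout, Corollary~\ref{cor: for the sup assumption}(2) handles the sub-case $\liminf v/(x^2(\log x)^{2+\epsilon/2})>0$; in the complementary sub-case, condition (c) upgrades $\liminf=0$ to $\lim=0$, whereupon condition (a) makes the dominant term $(m(x)x-v(x))/(x^2(\log x)^{1+\epsilon/2})\to+\infty$, hence $\A f\to-\infty$. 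Your second sub-case, invoking ``a bounded $f$ satisfying $\A f\ge\alpha>0$,'' has the sign reversed relative to the criterion actually in play, and the same $f$ works in both sub-cases.
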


In the following sections, we provide detailed proofs of each case.
For every dynamical property, we begin with the corresponding Foster–Lyapunov criterion, which serves as the main analytical tool.
Note that these Foster–Lyapunov criteria are formulated for general continuous-time Markov chains $\bar X$ on general countable state spaces $\bar{\mathbb S}$, whereas Theorem~\ref{thm:main} concerns Markov chains $X$ on $\mathbb S$.

\subsection{Explosivity}
\begin{prop}[\cite{menshikov2014explosion}, Theorem 1.12, Remark 1.13]\label{thm:explosion_ref}
    For an irreducible Markov chain $\bar X$ on a countable state space $\bar{\mathbb S}$, Suppose that there exist $\delta>0$, a finite subset $ A \subseteq \bar{\mathbb S}$, and $f \in \text{Dom}_+(\A)$ such that
    \begin{enumerate}[(i)]
      \item there exists $x_0 \in A^c$ such that $f(x_0) < \dmin_{x \in A}f(x)$,
        \item $\A f(x) \le -\delta$ for each $x\in A^c$. 
    \end{enumerate}
    Then $E(T)<\infty$, and in particular, $X$ is explosive.
\end{prop}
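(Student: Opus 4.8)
The plan is to prove this Foster–Lyapunov explosion criterion by the classical supermartingale (Dynkin) method. The intuition is that, by (ii), outside the finite set $A$ the Lyapunov function $f$ has mean drift at most $-\delta$, so $f(\bar X(t))$ decreases at a definite rate while the chain remains in $A^c$; since $f\ge 0$ it cannot decrease forever, and the only way to reconcile a persistent negative drift with nonnegativity is for the intrinsic clock to run out in finite time, i.e.\ for the chain to explode. Condition (i) is then what forces the chain to exploit this before being rescued by a visit to $A$.

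First I would fix $\bar X(0)=x_0\in A^c$ as in (i) and introduce the hitting time $\tau_A:=\inf\{t\ge 0:\bar X(t)\in A\}$, the explosion time $T=\lim_k T_k$, and $\sigma:=T\wedge\tau_A$; since the generator is only controlled on $A^c$, I work with the stopped process $\bar X(\cdot\wedge\sigma)$. To apply Dynkin's formula legitimately up to the possibly explosive time $\sigma$, I localize at the jump times: on each bounded, jump-finite interval $[0,\,t\wedge\sigma\wedge T_k]$ the identity
\[
  \mathbb E_{x_0}\!\big[f(\bar X(t\wedge\sigma\wedge T_k))\big]
  = f(x_0) + \mathbb E_{x_0}\!\int_0^{t\wedge\sigma\wedge T_k}\!\A f(\bar X(s))\,ds
\]
is justified because $f\in\mathrm{Dom}_+(\A)$ makes $\A f$ well defined and the compensated process a genuine local martingale. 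As $\bar X(s)\in A^c$ throughout the integration, (ii) gives $\A f(\bar X(s))\le-\delta$, so the right-hand side is at most $f(x_0)-\delta\,\mathbb E_{x_0}[t\wedge\sigma\wedge T_k]$.

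Next I would pass to the limit. Using $f\ge 0$ on the left and letting $k\to\infty$ (so $T_k\uparrow T\ge\sigma$) and then $t\to\infty$ by monotone convergence yields the a priori bound $\delta\,\mathbb E_{x_0}[\sigma]\le f(x_0)$, hence $\mathbb E_{x_0}[\sigma]\le f(x_0)/\delta<\infty$ and $\sigma<\infty$ almost surely. This bound is exactly the finite-expectation content of the claim: $\sigma=T\wedge\tau_A$ is the time at which the chain either explodes or is killed upon entering $A$, and on the explosion event $\{T<\tau_A\}$ it reads $\mathbb E_{x_0}[T;\,T<\tau_A]<\infty$. It remains to show this event has positive probability, and here I invoke (i). Applying Fatou's lemma to the displayed inequality and using that on $\{\tau_A\le T\}$ one has $\bar X(t\wedge\sigma)=\bar X(\tau_A)\in A$ for all large $t$, I obtain
\[
  \big(\min_{x\in A}f(x)\big)\,\mathbb P_{x_0}(\tau_A\le T)
  \le \mathbb E_{x_0}\!\big[f(\bar X(\tau_A));\,\tau_A\le T\big]
  \le f(x_0)-\delta\,\mathbb E_{x_0}[\sigma]\le f(x_0).
\]
By (i) this forces $\mathbb P_{x_0}(\tau_A\le T)\le f(x_0)/\min_{x\in A}f(x)<1$, so $\mathbb P_{x_0}(T<\tau_A)>0$: with positive probability $\bar X$ explodes before reaching $A$, whence $\mathbb P_{x_0}(T<\infty)>0$ and $\bar X$ is explosive.

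The main obstacle I anticipate is the rigorous use of Dynkin's formula up to the explosion time, where $\bar X$ may leave every finite set and $\A f$ is only controlled on $A^c$; the localization by $T_k$ together with the nonnegativity of $f$ is precisely what lets me take the limit $T_k\uparrow T$ without assuming non-explosion in advance. A secondary subtlety is that the contradiction step only needs the convergence $\bar X(t\wedge\sigma)\to\bar X(\tau_A)$ on $\{\tau_A\le T\}$, so the one-sided Fatou estimate never requires assigning a value to $f(\bar X(\sigma))$ on the explosion event $\{T<\tau_A\}$, where $\bar X(\sigma)$ need not lie in $\bar{\mathbb S}$.
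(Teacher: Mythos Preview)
The paper does not prove this proposition; it is quoted from \cite{menshikov2014explosion} (Theorem~1.12 and Remark~1.13) and invoked as a black box in the proof of Theorem~\ref{thm: expl}. Your Dynkin/supermartingale argument is precisely the semimartingale method of that reference, and the two key steps --- the bound $\mathbb E_{x_0}[\,T\wedge\tau_A\,]\le f(x_0)/\delta$ from condition~(ii), and the Fatou estimate $(\min_{A} f)\,\mathbb P_{x_0}(\tau_A\le T)\le f(x_0)$ from condition~(i) giving $\mathbb P_{x_0}(T<\tau_A)>0$ --- are carried out correctly. This yields $\mathbb P_{x_0}(T<\infty)>0$, which is all the paper actually uses downstream.

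There is, however, a gap relative to the statement as written: you have not established the stronger claim $E(T)<\infty$. Your own summary concedes this --- you control only $\sigma=T\wedge\tau_A$, and on the event that the chain enters $A$ before exploding (whose probability you have shown is strictly less than one, not zero) condition~(ii) says nothing about the subsequent evolution. Upgrading $\mathbb E_{x_0}[T\wedge\tau_A]<\infty$ to $\mathbb E_{x_0}[T]<\infty$ requires a further renewal step: bound the expected sojourn in the finite set $A$, restart the excursion estimate uniformly over the exit states of $A$, and control the (geometric) number of returns to $A$ before explosion. Without that, your sentence ``this bound is exactly the finite-expectation content of the claim'' overstates what has been shown.
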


Using $f(x)\sim (\log x)^p$ with $p<0$, we derive the following sufficient condition for explosion.
\begin{thm}[Explosivity]\label{thm: expl}
$X$ is explosive if
        \begin{enumerate}
            \item[(a)] $m(x)x-v(x) \gtrsim x^c$ for some $c>2$, 
            \item[(b)]   $\dliminf_{x \to \infty}H_1(x) >1$, and
            \item [(c)] $\dliminf_{x \to \infty}\frac{v(x)}{x^2 (\log x)^q} = 0$ implies $\dlim_{x \to \infty}\frac{v(x)}{x^2 (\log x)^q} = 0$ for any $q>2$.
        \end{enumerate}
\end{thm}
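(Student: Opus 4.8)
The plan is to apply Proposition~\ref{thm:explosion_ref} with a Lyapunov function of the form $f(x) = (\log x)^{p}$ for a suitable negative exponent $p<0$, extended to be a finite constant on the low states so that $f \in \text{Dom}_+(\A)$ (which is automatic here under Assumption~\ref{basic assumption}). Since $p<0$, $f$ is positive and strictly decreasing in $x$ for large $x$, so condition (i) of Proposition~\ref{thm:explosion_ref} — the existence of a far-away state $x_0$ with $f(x_0) < \min_{x\in A} f(x)$ — is satisfied for any sufficiently large finite window $A = \{x : x \le M\}$. The real content is condition (ii): we must show $\A f(x) \le -\delta$ for all $x \in A^c$ and some $\delta>0$, i.e. that $\A f$ stays bounded away from $0$ from below at infinity.

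First I would compute $\A f(x)$ for $f(x) = (\log x)^p$ using the expansion in Proposition~\ref{prop: generators}(1), specialized to $p \mapsto 0$, $q \mapsto p$ (in the notation of that proposition the polynomial exponent is $0$). This yields
\begin{align*}
    \A f(x) = p\, x^{-2}(\log x)^{p-1}\left\{ m(x)x - v(x) + (p-1)\frac{v(x)}{\log x} + O\!\left(\frac{v(x)}{x}\right)\right\}.
\end{align*}
Because $p<0$, the leading prefactor $p\,x^{-2}(\log x)^{p-1}$ is negative, so $\A f(x) < 0$ as soon as the braced factor is eventually positive. Hypothesis (a), $m(x)x - v(x) \gtrsim x^c$ with $c>2$, guarantees the dominant term $m(x)x - v(x)$ is positive and of order at least $x^c$; I would then argue that the correction terms $(p-1)v(x)/\log x$ and $O(v(x)/x)$ are negligible relative to $x^c$. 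Hypothesis (b), $\liminf_{x\to\infty} H_1(x) > 1$, is what controls the sign and size of $(p-1)v(x)/\log x$ versus $m(x)x - v(x)$: rewriting, $H_1(x) = (\log x)(m(x)x - v(x))/v(x)$, so $\liminf H_1 > 1$ says $m(x)x - v(x) \ge (1+\epsilon)\,v(x)/\log x$ for large $x$ and some $\epsilon>0$, which lets me choose $p$ close to $1$ (say $p = 1 - \epsilon/2 \in (0,1)$... — careful: I actually need $p<0$). The resolution here is the standard birth-and-death trick: one does not take $f = (\log x)^p$ directly with $p$ near $1$, but rather exploits that the combination appearing in the bracket, after dividing through, behaves like $(p-1) + H_1(x)/(\text{something})$; choosing $p<0$ makes $p-1 < -1$, and combined with $\liminf H_1 > 1$ one gets a definite-sign lower bound on $-\A f$. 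I would make this precise by writing $-\A f(x) = |p|\,x^{-2}(\log x)^{p-1}\,\big( m(x)x - v(x)\big)\big(1 + o(1)\big)$ and then using (a) to conclude $-\A f(x) \ge |p|\,x^{c-2}(\log x)^{p-1}\big(1+o(1)\big)$, which since $c>2$ tends to $+\infty$, hence is eventually $\ge \delta$ for any fixed $\delta$.

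The one subtlety — and where hypothesis (c) enters — is that the $o(1)$ and the comparison of correction terms implicitly require that $v(x)/(x^2(\log x)^q)$ not oscillate between $0$ and a positive $\liminf$: if along some subsequence $v(x)$ is much smaller than $x^2(\log x)^q$ while along another it is comparable, the error term $O(v(x)/x)$ relative to the main term $m(x)x-v(x)\gtrsim x^c$ could fail to be uniformly small in the way needed, or the extraction of a uniform $\delta$ could break. So the main obstacle I anticipate is not the algebra of the generator expansion — that is routine given Proposition~\ref{prop: generators} — but rather verifying the \emph{uniform} lower bound $\A f(x)\le -\delta$ over all large $x$ simultaneously, as opposed to merely $\limsup_{x\to\infty}\A f(x)<0$; this is exactly what condition (c) is designed to rule out, by forcing $v(x)/(x^2(\log x)^q)$ to have a genuine limit whenever its $\liminf$ vanishes. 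I would therefore structure the proof so that (a) and (b) give $\A f(x)<0$ for large $x$ with the right order of decay, and then invoke (c) to upgrade this to the uniform bound required by Proposition~\ref{thm:explosion_ref}(ii), finally concluding $E(T)<\infty$ and hence explosivity.
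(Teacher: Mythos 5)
Your overall strategy is the same as the paper's: apply Proposition~\ref{thm:explosion_ref} with $f(x)=(\log x)^{p}$, $p<0$, and use the expansion in Proposition~\ref{prop: generators}. However, the execution has concrete gaps. First, you never pin down the admissible range of $p$, and the claim you settle on --- that ``choosing $p<0$ makes $p-1<-1$, and combined with $\liminf H_1>1$ one gets a definite-sign lower bound on $-\A f$'' --- is false for general $p<0$. Writing $D=m(x)x-v(x)$ and $V=v(x)/\log x$, hypothesis (b) gives only $D\ge(1+\epsilon)V$ for some $\epsilon>0$, so the bracket $D+(p-1)V+O(v(x)/x)$ is bounded below by roughly $\bigl(1-\tfrac{1-p}{1+\epsilon}\bigr)D$, which is positive only when $p>-\epsilon$. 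If, say, $H_1(x)\to 1.5$ and $p=-10$, the bracket is negative and $\A f(x)>0$, destroying the argument. The paper avoids this by taking $p=-\epsilon/2$ with $\epsilon$ calibrated to $\liminf H_1\ge 1+\epsilon$. Second, your key display $-\A f(x)=|p|x^{-2}(\log x)^{p-1}\bigl(m(x)x-v(x)\bigr)\bigl(1+o(1)\bigr)$ is not implied by the hypotheses: it requires $V=o(D)$, i.e.\ $H_1(x)\to\infty$, whereas (b) only gives $\liminf H_1>1$. The correct statement is a one-sided bound $-\A f(x)\ge c_p\,|p|\,x^{-2}(\log x)^{p-1}D$ with a constant $c_p>0$ depending on the (suitably small) choice of $|p|$; that bound, combined with (a), does give $-\A f(x)\gtrsim x^{c-2}(\log x)^{p-1}\to\infty$.

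Third, your treatment of hypothesis (c) is a placeholder rather than an argument: ``invoke (c) to upgrade to the uniform bound'' does not correspond to any identifiable step, and the uniformity of the $-\delta$ bound is not in fact the issue (once $\A f(x)\to-\infty$, or even $\limsup\A f<0$, uniformity on $A^c$ is automatic for a suitable finite $A$). In the paper, (c) enters through an explicit dichotomy: either $\liminf_{x\to\infty}v(x)/\bigl(x^{2}(\log x)^{2+\epsilon/2}\bigr)>0$, in which case Corollary~\ref{cor: for the sup assumption}(2) already yields $\limsup_{x\to\infty}\A f(x)<0$ using (b) alone; or this $\liminf$ is $0$, in which case (c) upgrades it to a genuine limit $0$, the variance term in \eqref{eq: expl 1} vanishes in the limit, and (a) forces $\A f(x)\to-\infty$ through the drift term. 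If you intend to keep your single-case estimate, you should either carry out the corrected bound above (in which case (c) plays no role in your argument at all) or reproduce the paper's case split; as written, the proof neither establishes the sign of $\A f$ for your unconstrained $p$ nor uses (c) in any verifiable way.
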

\begin{proof}
By Proposition \ref{thm:explosion_ref}, it suffices to find a positive function $f$ on $\S$ such that $\dlim_{x \to \infty} f(x) = 0$ and $\dlimsup_{x \to \infty}\A f(x)<0$. We take $f(x) = (\log x)^{-\epsilon/2}\mathbbm{1}_{\{x>1\}}+\mathbbm{1}_{\{x\le 1\}}$ for some $\epsilon>0$, as in the proof of part (2) of Corollary~\ref{cor: for the sup assumption}.
If
$\dliminf_{x\to \infty} \frac{v(x)}{x^2(\log x)^{2+(\epsilon/2)}} > 0$, the desired result follows from Corollary~\ref{cor: for the sup assumption}.

Thus, it remains to consider the case $\dliminf_{x\to \infty} \frac{v(x)}{x^2(\log x)^{2+(\epsilon/2)}} = 0$, which implies $\dlim_{x\to \infty} \frac{v(x)}{x^2(\log x)^{2+(\epsilon/2)}} = 0$.
By Proposition~\ref{prop: generators} with $p=0$ and $q=\epsilon/2$, we have that, as $x \to \infty$,
\begin{align}
    \A f(x) &= \frac{-\epsilon/2}{x^2(\log x)^{(\epsilon/2)+1}}\left\{m(x)x-v(x) - (1+ \epsilon/2)\frac{v(x)}{\log x}+O\left(\frac{v(x)}{x}\right)\right\} \\ 
    &= -\frac{\epsilon}{2}\left\{\frac{m(x)x-v(x)}{x^2(\log x)^{(\epsilon/2)+1}}-(1+\epsilon/2) \frac{v(x)}{x^2(\log x)^{2+(\epsilon/2)}}\right\}\label{eq: expl 1} 
\end{align}

Then $\dlim_{x\to \infty} \frac{v(x)}{x^2(\log x)^{2+(\epsilon/2)}} = 0$ implies
\begin{align*}
    \dlimsup_{x \to \infty} \A f(x) = -\frac{\epsilon}{2}\liminf_{x \to \infty}\frac{m(x)x-v(x)}{x^2(\log x)^{(\epsilon/2)+1}} = -\infty,
\end{align*}
where the last equality follows from assumption $(a)$.
\end{proof}

For non-explosivity, we use the following result.
\begin{prop}[\cite{menshikov2014explosion}, Theorem 1.14]\label{thm: non-expl}
For an irreducible Markov chain $\bar X$ on a countable state space $\bar{\mathbb S}$, suppose there exists  $f \in \text{Dom}_+(\A)$ such that 
    \begin{enumerate}[(i)]
        \item $\dlim_{x \to \infty}f(x) = \infty$,
        \item there exists an increasing (not necessarily strict) function $g: \R_{> 0} \to \R_{> 0}$ such that $G(z): = \int_0^z \frac{dy}{g(y)}<\infty$ for each $z\in \bar \S$ and $\dlim_{z \to \infty}G(z) = \infty$, and
        \item $\A f(x) \le g(f(x))$ for all $x \in \bar \S$. 
    \end{enumerate}
    Then $P_x(T = \infty) = 1$ for all $x \in \bar \S$.
\end{prop}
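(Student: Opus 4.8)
The plan is to reconstruct the proof of Proposition~\ref{thm: non-expl} (quoted from \cite{menshikov2014explosion}) via a concave change of variables. Concretely, I would set $h:=G\circ f$, prove that $\A h\le 1$ everywhere while $h$ still tends to infinity, and then deduce non-explosivity from a direct exit-time estimate; the role of the transformation is to turn the comparison hypothesis $\A f\le g(f)$ into a genuine Foster--Lyapunov drift bound for a related function.

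First, $G$ is concave and increasing on $(0,\infty)$, since $g>0$ is increasing forces $G'=1/g$ to be positive and non-increasing. Because $f$ is positive with $f(x)\to\infty$, we have $f_\ast:=\inf f>0$; combining the sublinear bound $G(z)\le G(f_\ast)+z/g(f_\ast)$ with $f\in\text{Dom}_+(\A)$ and the finiteness of the total rates $q_x:=\sum_{y\ne x}q_{x,y}$ shows that $h$ is again a positive function in $\text{Dom}_+(\A)$, so $\A h$ is well defined and all the sums below converge absolutely. The crux is the tangent-line inequality for the concave $G$, namely $G(b)-G(a)\le (b-a)/g(a)$ for all $a,b>0$; taking $a=f(x)$, $b=f(y)$ and summing against the rates gives, for every $x\in\bar{\mathbb S}$,
\[
  \A h(x)=\sum_{y\ne x}q_{x,y}\big(G(f(y))-G(f(x))\big)\ \le\ \frac{1}{g(f(x))}\sum_{y\ne x}q_{x,y}\big(f(y)-f(x)\big)\ =\ \frac{\A f(x)}{g(f(x))}\ \le\ 1 ,
\]
the last step being hypothesis (iii). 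By (i) and (ii), $h(x)=G(f(x))\to\infty$ as $x$ exhausts $\bar{\mathbb S}$, so the sublevel sets $D_N:=\{x:h(x)\le N\}$ are finite.

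To finish, set $\sigma_N:=\inf\{t\ge0:\bar X(t)\notin D_N\}$. Since $D_N$ is finite with bounded total rates, the chain cannot perform infinitely many jumps before leaving $D_N$, so $\sigma_N\le T$. Dynkin's formula for $h$ at the bounded stopping time $t\wedge\sigma_N$ --- legitimate because $\A h$ is bounded on the finite set $D_N$ --- gives
\[
  \mathbb E_x\big[h(\bar X(t\wedge\sigma_N))\big]=h(x)+\mathbb E_x\!\left[\int_0^{t\wedge\sigma_N}\A h(\bar X(s))\,ds\right]\le h(x)+t .
\]
On $\{\sigma_N\le t\}$ the chain has already exited $D_N$, so $h(\bar X(t\wedge\sigma_N))=h(\bar X(\sigma_N))>N$; since $h\ge0$ this yields $N\,P_x(\sigma_N\le t)\le h(x)+t$. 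Because $\{T\le t\}\subseteq\{\sigma_N\le t\}$, letting $N\to\infty$ gives $P_x(T\le t)=0$ for every $t>0$, i.e.\ $P_x(T=\infty)=1$.

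The step I expect to be the real obstacle is the change of variables itself: recognizing that the concave primitive $G$ of $1/g$ is the function to compose $f$ with, and that this produces $\A h\le\A f/g(f)$. This is the discrete-generator analogue of the elementary ODE fact that $\phi'\le g(\phi)$ together with $\int^\infty dy/g(y)=\infty$ rules out finite-time blow-up; once it is in hand, the remaining items --- the membership $h\in\text{Dom}_+(\A)$, the behaviour of $G$ near the origin, the validity of Dynkin's identity before $\sigma_N$, and the bound $\sigma_N\le T$ --- are routine bookkeeping.
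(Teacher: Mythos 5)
The paper does not prove this proposition---it is imported verbatim from \cite{menshikov2014explosion} (Theorem 1.14)---so there is no in-paper argument to compare against. Your reconstruction is correct and is essentially the standard argument behind that result: the concave substitution $h=G\circ f$ turns $\A f\le g(f)$ into $\A h\le 1$ via the tangent-line inequality $G(b)-G(a)\le (b-a)/g(a)$, and the localized Dynkin estimate $N\,P_x(\sigma_N\le t)\le h(x)+t$ together with $\sigma_N\le T$ and the finiteness of the sublevel sets of $h$ (which uses $G(z)\to\infty$) rules out explosion.
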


Considering $g(x)$ as a constant or linear function, we have the following theorem for non-explosivity with $f(x) \sim (\log x)^p$.
\begin{thm}[Non-explosivity]\label{thm: non-expl 2}
    $X$ is non-explosive if either
           \begin{enumerate}[(a)]
               \item  $m(x)x-v(x)\lesssim x^2\log x$ and $v(x) \lesssim x^2(\log x)^2$, or \item  $\dlimsup_{x \to \infty}H_1(x)<1$.
           \end{enumerate} 
\end{thm}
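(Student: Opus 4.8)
The plan is to verify the hypotheses of Proposition~\ref{thm: non-expl} with the Lyapunov function $f(x) = (\log x)^{\epsilon}\mathbbm{1}_{\{x>1\}}+\mathbbm{1}_{\{x\le 1\}}$ for a suitably small $\epsilon>0$, separately in the two cases. In both cases condition (i) is immediate since $f(x)\to\infty$. The work is to produce the increasing comparison function $g$ in conditions (ii)--(iii); note that $g(y)=c$ (constant) gives $G(z)\sim z/c\to\infty$, and $g(y)=cy$ gives $G(z)\sim(\log z)/c\to\infty$, so either choice satisfies (ii). Hence the task reduces to bounding $\A f(x)$ from above by a constant (case (a)) or by a constant multiple of $f(x)=(\log x)^{\epsilon}$ (case (b)), for all $x$; finiteness on the finite exceptional set $\{x\le M\}$ is automatic under Assumption~\ref{basic assumption}, so it suffices to get the bound for large $x$.

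For case (a), apply Proposition~\ref{prop: generators}(1) with $p=0$, $q=\epsilon$. This yields, as $x\to\infty$,
\begin{align*}
    \A f(x) = \frac{\epsilon}{x^2(\log x)^{1-\epsilon}}\left\{m(x)x - v(x) + (\epsilon-1)\frac{v(x)}{\log x} + O\!\left(\frac{v(x)}{x}\right)\right\}.
\end{align*}
The hypothesis $m(x)x-v(x)\lesssim x^2\log x$ controls the leading bracket term: $\frac{\epsilon(m(x)x-v(x))}{x^2(\log x)^{1-\epsilon}}\lesssim (\log x)^{\epsilon}$, which is not yet bounded. The hypothesis $v(x)\lesssim x^2(\log x)^2$ handles the remaining terms: $\frac{v(x)}{x^2(\log x)^{2-\epsilon}}\lesssim(\log x)^{\epsilon}$ and the $O(v(x)/x)$ term is even smaller. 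So in fact $\A f(x)\lesssim(\log x)^{\epsilon}=f(x)$, and we take $g(y)=Cy$ for a large constant $C$ (adjusting so that $\A f(x)\le g(f(x))$ holds on all of $\mathbb S$). Thus case (a) gives a linear-$g$ argument rather than a constant-$g$ one; this is fine for Proposition~\ref{thm: non-expl}. Here one must be a little careful that the inequality $m(x)x-v(x)\lesssim x^2\log x$ is a one-sided bound in the sense of the paper's $\lesssim$ notation, so the bracket could also be very negative, only making $\A f(x)$ smaller, hence the upper bound is unaffected.

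For case (b), the condition $\dlimsup_{x\to\infty}H_1(x)<1$ is exactly the hypothesis of Corollary~\ref{cor: for the sup assumption}(1), which already furnishes a positive $f$ with $f(x)\to\infty$ and $\A f(x)<0$ for all large $x$; taking $g\equiv\delta$ for any $\delta>0$ then verifies (ii)--(iii) for large $x$, and one absorbs the finitely many remaining states into the constant. The main obstacle is bookkeeping rather than conceptual: one must confirm that the $O(\cdot)$ error terms in Proposition~\ref{prop: generators}(1) are genuinely dominated by the stated bounds uniformly in $x$ (not just along subsequences), and that the two hypotheses in (a) combine to control \emph{both} the $m(x)x-v(x)$ term and the lower-order $v(x)/x$ and $v(x)/\log x$ terms simultaneously --- in particular checking that choosing $\epsilon$ small does not break either bound. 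Since $\epsilon$ enters only through the harmless exponents $(\log x)^{\pm\epsilon}$, any fixed small $\epsilon$ (e.g. $\epsilon=1/2$) works, so no delicate optimization is needed.
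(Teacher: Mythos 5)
Your argument is essentially the paper's proof: case (b) is dispatched via Corollary~\ref{cor: for the sup assumption} with a constant comparison function, and case (a) via a logarithmic Lyapunov function (the paper takes $q=1$, you take $q=\epsilon$; either exponent works) with a linear-growth comparison function, and you correctly note that the one-sided $\lesssim$ bounds only need to control $\A f$ from above. One small correction: a purely linear $g(y)=cy$ does \emph{not} satisfy condition (ii) of Proposition~\ref{thm: non-expl}, because $G(z)=\int_0^z \frac{dy}{cy}=\infty$ for every $z>0$ owing to the singularity at the origin; you must take $g(y)=cy+b$ with $b>0$ (as the paper does), which your bound $\A f(x)\le Cf(x)+b$ already supplies, so the fix is immediate.
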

\begin{proof}

If assumption~$(b)$ holds, then we can take $f$ as in the proof of Corollary~\ref{cor: for the sup assumption} and set $g \equiv c>0$, so that the conditions in Proposition~\ref{thm: non-expl} follow from Corollary~\ref{cor: for the sup assumption}.

Now assume $(a)$.
Let $f=(\log x)\mathbbm{1}_{\{x > 1\}}+\mathbbm{1}_{\{x\le 1\}}$, which satisfies condition (i) of Proposition~\ref{thm: non-expl}. By Proposition \ref{prop: generators}, we have
\begin{align}
    \A f(x) &= \frac{1}{x^2}\left\{m(x)x -v(x) + O\left(\frac{v(x)}{x}\right)\right\}\quad \text{as $x \to \infty$}\label{eq: non-expl 1}.
\end{align}
From assumption $(a)$ and \eqref{eq: non-expl 1}, it follows that $\A f(x) \le af(x)+b$ for all $x \in \mathbb S$ with some positive constants $a$ and $b$. Then conditions $(ii)$ and $(iii)$ of Proposition \ref{thm: non-expl} hold with $g(x) = ax+b$, completing the proof.
\end{proof}

\subsection{Recurrence}

For transience and recurrence, we provide a version of Foster–Lyapunov criteria obtained by combining \cite[Proposition 1.3]{hairer2010convergence} and \cite[Theorem 3.4.1]{NorrisMC97}:

\begin{prop}\label{thm: hairer}
    Let $\bar X$ be an irreducible continuous-time Markov chain on a countable state space $\bar{\mathbb S}$. Then 
    \begin{enumerate}[(i)]
        \item $\bar X$ is transient if and only if there exists a function $f\in \text{Dom}_+(\A)$ and a non-empty set $A \subset \bar \S$ such that $\A f(x) \le 0$ for all $x \not \in A$ and there exists $x_0 \not \in A$ such that $f(x_0) < \inf_{y \in A}f(y)$. 
        \item $\bar X$ is recurrent if and only if there exists a function $f\in \text{Dom}_+(\A)$, which satisfies $|\{x: f(x) \le N \}| < \infty$ for every $N>0$ and $\A f(x)\le 0$ for all but finitely many $x$. 
    \end{enumerate}
\end{prop}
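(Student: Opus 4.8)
The plan is to establish both equivalences in Proposition~\ref{thm: hairer} by passing to the embedded jump chain $\hat X$ and then invoking the discrete-time Foster--Lyapunov criteria of \cite{hairer2010convergence} and \cite{NorrisMC97}. Since $\bar X$ is irreducible, the case $|\bar{\mathbb S}|=1$ is trivial; otherwise no state is absorbing, so $q(x):=\sum_{y\ne x}q_{x,y}\in(0,\infty)$ and $\hat X$ has transition probabilities $p_{x,y}=q_{x,y}/q(x)$ for $y\ne x$. For any $f\in\text{Dom}_+(\A)$ one has the pointwise identity $\A f(x)=q(x)\big((Pf)(x)-f(x)\big)$, where $(Pf)(x)=\sum_{y}p_{x,y}f(y)$ is finite precisely because $f\in\text{Dom}_+(\A)$; hence $\A f(x)\le0\iff(Pf)(x)\le f(x)$. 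As recurrence and transience of $\bar X$ depend only on the sequence of states visited, $\bar X$ is recurrent (resp.\ transient) if and only if $\hat X$ is \cite[Ch.~3]{NorrisMC97}. Thus it suffices to prove (i) and (ii) for $\hat X$ with $\A$ replaced by $P-I$.

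For (i): for ``if'', I would run the stopped-supermartingale argument. With $\tau_A=\inf\{n\ge0:\hat X_n\in A\}$, the hypotheses make $f(\hat X_{n\wedge\tau_A})$ a nonnegative $P_{x_0}$-supermartingale, so Fatou gives $\mathbb E_{x_0}\!\left[f(\hat X_{\tau_A})\mathbbm{1}_{\{\tau_A<\infty\}}\right]\le f(x_0)<\inf_{y\in A}f(y)$. A recurrent irreducible chain hits the nonempty set $A$ almost surely, which would force $\inf_{y\in A}f(y)\le f(x_0)$; hence $\hat X$ must be transient. For ``only if'', assuming $\hat X$ transient, I would fix a state $a$, set $A=\{a\}$, and take $f(x)=P_x(\hat X\text{ ever reaches }a)$: this is bounded, strictly positive by irreducibility (so $f\in\text{Dom}_+(\A)$), harmonic off $a$ by first-step analysis, and since transience forces $P_a(\hat X\text{ returns to }a)=\sum_{y\ne a}p_{a,y}f(y)<1$, some $x_0\ne a$ has $f(x_0)<1=\inf_{y\in A}f(y)$. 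This is \cite[Prop.~1.3]{hairer2010convergence} for $\hat X$.

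For (ii): if $\bar{\mathbb S}$ is finite the claim is trivial, so assume it is infinite and let $F$ be the finite set off which $Pf\le f$. For ``if'', $f(\hat X_{n\wedge\tau_F})$ is a nonnegative supermartingale; if $P_x(\tau_F=\infty)>0$ for some $x$ then on $\{\tau_F=\infty\}$ it converges, and finiteness of the sublevel sets $\{f\le N\}$ confines $\hat X$ to a finite subset of $F^c$, so some state is visited infinitely often with positive probability and hence is recurrent; if instead $P_x(\tau_F<\infty)=1$ for every $x$, then $\hat X$ returns to the finite set $F$ infinitely often almost surely, so again some state of $F$ is recurrent --- either way $\hat X$ is recurrent. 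The ``only if'' direction is the substantive one: from recurrence one must produce a function with finite sublevel sets that is superharmonic off a finite set. I would build $f$ from the ``escape before returning to $a$'' probabilities $g_n(x)=P_x(\hat X\text{ leaves }F_n\text{ before hitting }a)$ along an exhausting sequence of finite sets $F_1\subset F_2\subset\cdots$ with $a\in F_1$ and $\bigcup_nF_n=\bar{\mathbb S}$ --- each $g_n$ is superharmonic off $\{a\}$, and $g_n\downarrow0$ by recurrence --- choosing the sets and nonnegative weights $c_n$ so that $f=\sum_nc_ng_n$ is finite everywhere but $f(x)\to\infty$ as $x$ leaves every finite set; this is the construction behind \cite[Thm.~3.4.1]{NorrisMC97}. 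Throughout, the only bookkeeping point is that every $f$ used lies in $\text{Dom}_+(\A)$, which is automatic since each is bounded and, by irreducibility, strictly positive. The main obstacle is precisely this ``only if'' construction for recurrence; everything else reduces to the supermartingale estimate and the jump-chain translation.
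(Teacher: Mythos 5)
Your proposal is correct and follows essentially the same route as the paper, which gives no proof of its own but obtains the statement by combining the jump-chain correspondence of \cite[Theorem~3.4.1]{NorrisMC97} with the discrete-time Foster--Lyapunov characterizations of \cite[Proposition~1.3]{hairer2010convergence} --- exactly the reduction $\A f(x)=q(x)\bigl((Pf)(x)-f(x)\bigr)$ and the supermartingale arguments you spell out. The only bookkeeping slips are in the recurrence ``only if'' construction: the function $f=\sum_k g_{n_k}$ vanishes at $a$ and is unbounded, so positivity requires adding a constant, and membership in $\text{Dom}_+(\A)$ at $x=a$ requires folding $E_a[g_{n_k}(\hat X_1)]\le 2^{-k}$ into the diagonal choice rather than appealing to boundedness.
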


Applying these criteria to $X$, the additional conditions on $f$ can be simplified, leading to the following corollary.
    \begin{cor}\label{cor: rec}
        $X$ is recurrent (transient, respectively) if there exists a positive function $f$ on $\S$ such that $\dlim_{x \to \infty}f(x) = \infty \ (0, \text{respectively})$ and $\mathcal Af(x) \le 0$ for all large $x$. 
    \end{cor}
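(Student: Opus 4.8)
The plan is to obtain both implications directly from the Foster--Lyapunov characterizations in Proposition~\ref{thm: hairer}, exploiting that under Assumption~\ref{basic assumption} every positive function on $\mathbb S$ already belongs to $\text{Dom}_+(\mathcal A)$; consequently the only hypotheses left to check are the ones concerning $f$ itself and the choice of an exceptional set. Throughout I will use that, since $\mathbb S\subseteq\mathbb Z_{\ge 0}$, a statement that holds ``for all large $x$'' fails only on a set contained in $\{x\in\mathbb S : x<M\}$ for some $M$, which is finite.

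For recurrence, suppose $f>0$ on $\mathbb S$ with $\lim_{x\to\infty}f(x)=\infty$ and $\mathcal A f(x)\le 0$ for all large $x$. First, the set of $x\in\mathbb S$ on which $\mathcal A f(x)\le 0$ fails is finite by the observation above, which is exactly the condition ``$\mathcal A f(x)\le 0$ for all but finitely many $x$'' in Proposition~\ref{thm: hairer}(ii). Second, for the sublevel condition, fix $N>0$; since $f(x)\to\infty$ there is $M_N$ with $f(x)>N$ whenever $x\ge M_N$, so $\{x\in\mathbb S: f(x)\le N\}\subseteq\{x\in\mathbb S : x<M_N\}$ is finite. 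As $f\in\text{Dom}_+(\mathcal A)$ automatically, all hypotheses of Proposition~\ref{thm: hairer}(ii) are met and $X$ is recurrent.

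For transience, suppose $f>0$ on $\mathbb S$ with $\lim_{x\to\infty}f(x)=0$ and $\mathcal A f(x)\le 0$ for $x\ge M$. Take a non-empty finite set $A\subset\mathbb S$ containing every $x\in\mathbb S$ with $x<M$ (adjoining one arbitrary point of $\mathbb S$ if that set is empty); then $\mathcal A f(x)\le 0$ for all $x\notin A$. Since $A$ is finite and $f>0$, we have $\inf_{y\in A}f(y)=\min_{y\in A}f(y)>0$, and since $f(x)\to 0$ while $\mathbb S\setminus A$ is infinite, there is $x_0\in\mathbb S\setminus A$ with $f(x_0)<\min_{y\in A}f(y)$. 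This verifies the hypotheses of Proposition~\ref{thm: hairer}(i), so $X$ is transient.

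The argument is essentially bookkeeping and I do not anticipate a genuine obstacle; the only point needing a little care is the construction of the exceptional set $A$ in the transient case, ensuring it is non-empty, excludes every state where $\mathcal A f$ could be positive, and still leaves room in $\mathbb S\setminus A$ to place the low point $x_0$, together with recording that $f\in\text{Dom}_+(\mathcal A)$ comes for free from Assumption~\ref{basic assumption}.
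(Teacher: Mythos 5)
Your proposal is correct and is exactly the bookkeeping the paper intends: Corollary~\ref{cor: rec} is stated as a direct specialization of Proposition~\ref{thm: hairer} to $X$, using that every positive function on $\mathbb S\subseteq\mathbb Z_{\ge 0}$ lies in $\text{Dom}_+(\mathcal A)$ under Assumption~\ref{basic assumption}, that ``for all large $x$'' means failure only on a finite set, and that $f\to\infty$ (resp.\ $f\to 0$) yields the sublevel-set condition (resp.\ the existence of $x_0$ with $f(x_0)<\inf_{y\in A}f(y)$). The paper leaves these verifications implicit, and your write-up supplies them along the same route.
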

    Further applications yield more concrete results.
\begin{thm}[Recurrence]\label{thm: recu}
    $X$ is recurrent if 
        $\dlimsup_{x \to \infty}H_1(x) <1$.
\end{thm}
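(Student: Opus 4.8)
The plan is to apply the recurrence half of Corollary~\ref{cor: rec}: it suffices to exhibit a positive function $f$ on $\mathbb S$ with $f(x)\to\infty$ and $\mathcal A f(x)\le 0$ for all large $x$. The hypothesis $\dlimsup_{x\to\infty}H_1(x)<1$ is precisely the hypothesis of part~(1) of Corollary~\ref{cor: for the sup assumption}, so I would simply invoke that corollary. It produces, for some $\epsilon>0$, a positive function $f$ (of the form $f(x)=x^{0}(\log x)^{\epsilon/2}\mathbbm 1_{\{x>1\}}+\mathbbm 1_{\{x\le 1\}}$, i.e.\ $f\sim(\log x)^{\epsilon/2}$, in the notation of the preceding material) with $\lim_{x\to\infty}f(x)=\infty$ and $\mathcal A f(x)<0$ for all large $x$. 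These are exactly the two conditions required by Corollary~\ref{cor: rec}, so recurrence of $X$ follows at once.

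The only point that needs a word of care is the reduction of $H_1(x)$ itself: one must check that the condition $\dlimsup_{x\to\infty}H_1(x)<1$ really is the right threshold for the Lyapunov function $f\sim(\log x)^{\epsilon/2}$. Unwinding $H_1$, the bound $\dlimsup_{x\to\infty}H_1(x)<1$ says that for large $x$,
\[
    (\log x)\,\frac{m(x)x-v(x)}{v(x)} \le 1-\epsilon
\]
for some $\epsilon>0$, equivalently $m(x)x-v(x)\le (1-\epsilon)\dfrac{v(x)}{\log x}$. Plugging $f\sim(\log x)^{\epsilon/2}$ into the generator expansion of Proposition~\ref{prop: generators} (with $p=0$, $q=\epsilon/2$) gives, as $x\to\infty$,
\[
    \mathcal A f(x) = \frac{\epsilon/2}{x^2(\log x)^{1-\epsilon/2}}\left\{m(x)x-v(x)+(\epsilon/2-1)\frac{v(x)}{\log x}+O\!\left(\frac{v(x)}{x}\right)\right\},
\]
and the bracket is bounded above by $\bigl((1-\epsilon)-(1-\epsilon/2)\bigr)\dfrac{v(x)}{\log x}+o\!\left(\dfrac{v(x)}{\log x}\right)=-\dfrac{\epsilon}{2}\dfrac{v(x)}{\log x}+o\!\left(\dfrac{v(x)}{\log x}\right)<0$ for large $x$. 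This is the computation already carried out inside Corollary~\ref{cor: for the sup assumption}, which is why the cleanest write-up just cites that corollary rather than redoing the expansion.

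There is essentially no serious obstacle here: the theorem is a direct corollary of machinery established earlier in the paper. The one subtlety worth flagging in the proof is that $H_1$ may fail to have a limit, so one works with $\dlimsup$ throughout, and that the conclusion of Corollary~\ref{cor: for the sup assumption} only guarantees $\mathcal A f(x)<0$ eventually — but ``for all large $x$'' is exactly what Corollary~\ref{cor: rec} asks for, so nothing more is needed. Hence the proof is one line: combine Corollary~\ref{cor: for the sup assumption}(1) with Corollary~\ref{cor: rec}.
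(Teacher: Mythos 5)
Your proposal is correct and follows exactly the paper's own argument: the paper's proof is the one-line observation that the hypotheses of Corollary~\ref{cor: rec} follow from part~(1) of Corollary~\ref{cor: for the sup assumption}. Your additional unwinding of $H_1$ and the generator expansion for $f\sim(\log x)^{\epsilon/2}$ merely reproduces (correctly) the computation already contained in the proof of that corollary.
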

\begin{proof}
    The conditions of Corollary~\ref{cor: rec} follow from (1) of Corollary~\ref{cor: for the sup assumption}.
\end{proof}
\begin{thm}[Transience]\label{thm: tran}
         $X$ is transient if 
        $\dliminf_{x \to \infty}H_1(x) > 1$.
\end{thm}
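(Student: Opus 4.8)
The plan is to mirror the proof of Theorem~\ref{thm: recu}, simply replacing the unbounded Lyapunov function used there by one that decays to zero. By the transience half of Corollary~\ref{cor: rec}, it is enough to exhibit a positive function $f$ on $\mathbb S$ with $\dlim_{x\to\infty} f(x) = 0$ and $\A f(x) \le 0$ for all large $x$. Such an $f$ is precisely what part~(2) of Corollary~\ref{cor: for the sup assumption} produces under the hypothesis $\dliminf_{x\to\infty} H_1(x) > 1$: one obtains a positive $f$ and some $\epsilon > 0$ with $f(x)\to 0$ and $\A f(x) < 0$ for all large $x$. So the whole proof collapses to the single observation that the hypotheses of Corollary~\ref{cor: rec} are supplied by part~(2) of Corollary~\ref{cor: for the sup assumption}.

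To spell out the construction underlying that corollary: I would fix $\epsilon > 0$ small enough that $\dliminf_{x\to\infty} H_1(x) > 1 + \epsilon/2$, and take $f(x) = (\log x)^{-\epsilon/2}\mathbbm{1}_{\{x>1\}} + \mathbbm{1}_{\{x\le 1\}}$, which is positive and decreases to $0$. Proposition~\ref{prop: generators} with $p = 0$ and $q = -\epsilon/2$ then gives, as $x\to\infty$,
\begin{align*}
    \A f(x) = -\frac{\epsilon/2}{x^2 (\log x)^{(\epsilon/2)+1}}\left\{ m(x)x - v(x) - (1+\epsilon/2)\frac{v(x)}{\log x} + O\!\left(\frac{v(x)}{x}\right)\right\},
\end{align*}
and dividing the braced quantity by $v(x)/\log x > 0$ turns it into $H_1(x) - (1+\epsilon/2) + o(1)$, which is eventually positive by the choice of $\epsilon$. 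Hence $\A f(x) < 0$ for all large $x$, as needed. This is exactly the computation already carried out in the proof of Theorem~\ref{thm: expl}, which is why the cleanest write-up really is the one-line appeal to Corollary~\ref{cor: for the sup assumption}.

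The only technical point is that the $O(v(x)/x)$ term in the generator expansion must be negligible compared with $v(x)/\log x$, and that $v(x)$ is bounded away from $0$ for large $x$; both are already incorporated into Proposition~\ref{prop: generators} and Corollary~\ref{cor: for the sup assumption} for irreducible chains satisfying Assumption~\ref{basic assumption}, so there is no genuine obstacle here. The one place where the hypothesis is actually used is in opening a gap of size $\epsilon/2$: it is the strict inequality $\dliminf_{x\to\infty} H_1(x) > 1$, rather than the borderline case $H_1(x)\to 1$, that keeps the braced quantity positive and thereby forces transience.
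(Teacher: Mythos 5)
Your proposal is correct and follows exactly the paper's route: the paper's proof of this theorem is precisely the one-line appeal to Corollary~\ref{cor: rec} via part~(2) of Corollary~\ref{cor: for the sup assumption}, and your expanded computation with $f(x)=(\log x)^{-\epsilon/2}$ matches the construction carried out in the appendix. (The only cosmetic remark is that one needs $v(x)>0$, not bounded away from $0$, to read off the sign — but this does not affect the argument.)
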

\begin{proof}
 The conditions of Corollary~\ref{cor: rec} follow from (2) of Corollary~\ref{cor: for the sup assumption}.
\end{proof}

\subsection{Positive recurrence and null recurrence}

\cite{MT-LyaFosterIII} established Foster–Lya- punov criteria for positive recurrence of Markov processes on general topological spaces. Here, we introduce a restricted version applicable to continuous-time Markov chains on countable state spaces, which is our main focus.

\begin{prop}[\text{\cite[Proposition 11]{XuHansenWiuf2022}}]\label{thm: mt}
Let $\bar X$ be an irreducible, recurrent continuous-time Markov chain on countable state space $\bar \S$. Then the following are equivalent: 
\begin{enumerate}
   \item  $\bar X$ is positive recurrent. 
    \item There exist $\delta>0$, a finite subset $A \subseteq \bar \S$, and $f \in \text{Dom}_+(\A)$ satisfying $\A f(x) \le -\delta$ for all $ x\in \bar \S\setminus A$. 
\end{enumerate}
\end{prop}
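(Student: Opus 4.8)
The plan is to prove the two implications separately, in each case via a Dynkin-type comparison argument. First I would record the observation that an irreducible \emph{recurrent} continuous-time Markov chain is non-explosive (the excursions between successive visits to a fixed state are i.i.d.\ with almost surely finite length and there are infinitely many of them, so $T=\infty$ a.s.), since this is what legitimizes the optional-stopping steps below.

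For $(2)\Rightarrow(1)$: take the data $\delta>0$, the finite set $A$ (which I may assume nonempty), and $f\in\text{Dom}_+(\A)$ from the hypothesis, and set $\sigma_A:=\inf\{t\ge0:\bar X(t)\in A\}$. Applying Dynkin's formula to $f$ along the bounded stopping times $\sigma_A\wedge T_n\wedge n$ and using $\A f\le-\delta$ on $A^c$ together with $f\ge0$, I get $\delta\,\mathbb E_x[\sigma_A\wedge T_n\wedge n]\le f(x)$ for $x\in A^c$; letting $n\to\infty$ (so $T_n\uparrow\infty$ by non-explosivity and $\sigma_A<\infty$ a.s.\ by recurrence) and invoking monotone convergence gives $\mathbb E_x[\sigma_A]\le f(x)/\delta<\infty$. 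A first-jump decomposition then bounds the first return time $\tau_A:=\inf\{t\ge T_1:\bar X(t)\in A\}$: for $x\in A$, $\mathbb E_x[\tau_A]\le q_x^{-1}+(\delta q_x)^{-1}\sum_{y\ne x}q_{x,y}f(y)<\infty$, where $q_x:=\sum_{y\ne x}q_{x,y}$, the finiteness coming from $f\in\text{Dom}_+(\A)$; since $A$ is finite, $C:=\max_{x\in A}\mathbb E_x[\tau_A]<\infty$. Finally I would sample $\bar X$ at its successive arrival times $0=\sigma_0<\sigma_1<\sigma_2<\cdots$ in $A$; by the strong Markov property $Y_k:=\bar X(\sigma_k)$ is a discrete-time Markov chain on the finite set $A$, irreducible because $\bar X$ is, hence positive recurrent, so for a fixed $x^\ast\in A$ the number $N$ of steps for $Y$ to return to $x^\ast$ satisfies $\mathbb E[N]<\infty$. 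Since $\tau_{x^\ast}=\sigma_N$ and $\mathbb E[\sigma_{k+1}-\sigma_k\mid\mathcal F_{\sigma_k}]\le C$, a Wald-type estimate yields $\mathbb E_{x^\ast}[\tau_{x^\ast}]\le C\,\mathbb E[N]<\infty$, i.e.\ $\bar X$ is positive recurrent.

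For $(1)\Rightarrow(2)$: I would fix any $x^\ast\in\bar{\mathbb S}$, take $A:=\{x^\ast\}$, $\delta:=1$, and $f(x):=\mathbb E_x[\sigma_{x^\ast}]$ with $\sigma_{x^\ast}:=\inf\{t\ge0:\bar X(t)=x^\ast\}$, so that $f(x^\ast)=0$; finiteness $f(x)<\infty$ for every $x$ is the standard fact that in a positive recurrent irreducible countable chain all single-state hitting times have finite mean (each excursion from $x^\ast$ reaches $x$ with some probability $p>0$ by irreducibility plus recurrence, whence $p\,\mathbb E_x[\sigma_{x^\ast}]\le\mathbb E_{x^\ast}[\tau_{x^\ast}]<\infty$). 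A first-step analysis at $x\ne x^\ast$ gives $q_xf(x)=1+\sum_{y\ne x}q_{x,y}f(y)$, i.e.\ $\A f(x)=-1\le-\delta$ on $A^c$; the same identity gives $\sum_{y\ne x}q_{x,y}f(y)=q_xf(x)-1<\infty$, and the decomposition $\mathbb E_{x^\ast}[\tau_{x^\ast}]=q_{x^\ast}^{-1}+q_{x^\ast}^{-1}\sum_{y\ne x^\ast}q_{x^\ast,y}f(y)$ settles the case $x=x^\ast$, so $f\in\text{Dom}_+(\A)$. This produces the data required in $(2)$.

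The main obstacle I expect is the passage in $(2)\Rightarrow(1)$ from ``finite expected hitting time of the finite set $A$'' to ``finite expected return time to a single state,'' since the Lyapunov bound only controls excursions relative to $A$ as a whole; the induced-chain device above (which genuinely needs $A$ finite and $\bar X$ irreducible) is what bridges this gap. The only other points demanding care are justifying Dynkin's formula and the monotone-convergence passage for a possibly unbounded $f$, where the hypothesis $f\in\text{Dom}_+(\A)$ and non-explosivity (from recurrence) are exactly what is needed.
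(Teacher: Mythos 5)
The paper does not prove this proposition at all: it is imported verbatim from \cite[Proposition~11]{XuHansenWiuf2022} (itself a countable-state specialization of the Meyn--Tweedie continuous-time Foster--Lyapunov criterion), so there is no in-paper argument to compare against. Your self-contained proof is correct and follows the standard route, and the two places you flag as delicate are exactly the right ones. A few points to tighten. (i) In $(2)\Rightarrow(1)$ the Dynkin step is most safely run on the embedded jump chain: with $\hat X_k=\bar X(T_k)$ one has $\mathbb E[f(\hat X_{k+1})\mid \hat X_k=x]-f(x)=\A f(x)/q_x=\A f(x)\,\mathbb E[T_{k+1}-T_k\mid \hat X_k=x]$, finite precisely because $f\in\text{Dom}_+(\A)$; stopping at the jump index $\hat\sigma_A$ of the first entrance to $A$ makes $f(\hat X_{k\wedge\hat\sigma_A})$ a nonnegative supermartingale, whence $\delta\,\mathbb E_x[T_{n\wedge\hat\sigma_A}]\le f(x)$ with no residual integrability issue, and monotone convergence together with non-explosivity (which, as you note, follows from recurrence) gives $\mathbb E_x[\sigma_A]\le f(x)/\delta$. (ii) You may always take $A\neq\emptyset$ simply by enlarging $A$, which only weakens the drift hypothesis. (iii) The induced chain $Y$ on the finite set $A$ is a genuine irreducible Markov chain by the strong Markov property (every visit of $\bar X$ to a state of $A$ occurs at some $\sigma_k$), hence positive recurrent, and your Wald bound is a legitimate Tonelli computation since the increments $\sigma_{k+1}-\sigma_k$ are nonnegative and $\{N>k\}\in\mathcal F_{\sigma_k}$. (iv) In $(1)\Rightarrow(2)$ you have $f(x^\ast)=0$, so if $\text{Dom}_+(\A)$ is read as strictly positive functions, replace $f$ by $f+1$, which leaves $\A f$ unchanged. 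None of these affects the substance; the argument is sound.
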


By applying this proposition to $X$ and using Corollary~\ref{cor: rec}, we obtain the following simplified sufficient condition.

\begin{cor}
    $X$ is positive recurrent if there exists a positive function $f$ on $\S$ such that $\dlim_{x \to \infty}f(x) = \infty$ and $\dlimsup_{x \to \infty}\mathcal Af(x) < 0$. 
\end{cor}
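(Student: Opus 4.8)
The plan is to combine Proposition~\ref{thm: mt} with Corollary~\ref{cor: rec} in a manner fully parallel to the transience/recurrence case already treated. First I would invoke Corollary~\ref{cor: rec}: the hypothesis $\lim_{x\to\infty}f(x)=\infty$ together with $\limsup_{x\to\infty}\A f(x)<0$ (which in particular gives $\A f(x)\le 0$ for all large $x$) guarantees that $X$ is recurrent. This is the preliminary step, and it is needed because Proposition~\ref{thm: mt} only applies to Markov chains that are already known to be recurrent.

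Having established recurrence, I would then apply the equivalence in Proposition~\ref{thm: mt} (direction $(2)\Rightarrow(1)$). The condition $\limsup_{x\to\infty}\A f(x)<0$ means there exist $\delta>0$ and a constant $M>0$ such that $\A f(x)\le -\delta$ for all $x\in\mathbb S$ with $x\ge M$. Setting $A:=\{x\in\mathbb S: x<M\}$, which is finite since $\mathbb S\subseteq\mathbb Z_{\ge 0}$, we obtain exactly the hypothesis of part~(2) of Proposition~\ref{thm: mt}: $\A f(x)\le -\delta$ for all $x\in\mathbb S\setminus A$. One small point to check is that $f\in\mathrm{Dom}_+(\A)$; but by Assumption~\ref{basic assumption} and the remark following Proposition~\ref{prop: generators}, every positive real-valued function on $\mathbb S$ lies in $\mathrm{Dom}_+(\A)$, so the given $f$ qualifies automatically. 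Proposition~\ref{thm: mt} then yields that $X$ is positive recurrent, completing the proof.

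I do not anticipate a serious obstacle here; the statement is essentially a packaging of the two cited results, and the only things to be careful about are (i) confirming that the finite exceptional set required by Proposition~\ref{thm: mt} can be taken to be $\{x<M\}$, which uses that $\mathbb S$ consists of non-negative integers so that $\limsup_{x\to\infty}$ correctly captures ``all but finitely many states,'' and (ii) observing that the strict negativity of the $\limsup$ is exactly what converts into a uniform bound $\A f(x)\le-\delta$ on the complement of that finite set. The mild subtlety, if any, is purely notational: the Foster--Lyapunov propositions are stated for general $\bar X$ on $\bar{\mathbb S}$, and one should note explicitly that $X$ on $\mathbb S$ satisfying Assumption~\ref{basic assumption} is a special case to which they apply verbatim.
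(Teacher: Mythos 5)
Your proposal is correct and follows exactly the route the paper intends: establish recurrence via Corollary~\ref{cor: rec}, then convert the strictly negative $\limsup$ into a uniform bound $\A f(x)\le-\delta$ outside a finite set of small states and invoke Proposition~\ref{thm: mt}. The paper leaves this argument implicit, and your write-up supplies precisely the missing details (including the $\mathrm{Dom}_+(\A)$ check) without deviating from its approach.
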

The following theorem provides sufficient conditions on $m(x)$ and $v(x)$ for this criterion to hold.
\begin{thm}[positive recurrence]\label{thm: positive}
    $X$ is positive recurrent if either
\begin{enumerate}[(a)]
                \item $v(x) \gtrsim x^c$ for some $c \le 2$ and  $
        \limsup_{x \to \infty}J(x) <(c-1)$, or
    \item $v(x) \gtrsim x^c$ for some $c>2$ and $
        \limsup_{x\to \infty} H_1(x)<1$.
            \end{enumerate}
\end{thm}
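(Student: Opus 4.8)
The plan is to apply the corollary stated immediately before the theorem: it suffices, in each of the two cases, to produce a positive function $f$ on $\mathbb S$ with $\lim_{x\to\infty}f(x)=\infty$ and $\limsup_{x\to\infty}\mathcal Af(x)<0$ (this already forces $\mathcal Af(x)\le 0$ for large $x$, hence recurrence by Corollary~\ref{cor: rec}, and then positive recurrence by Proposition~\ref{thm: mt}).

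Case (b) is almost immediate. Since $\limsup_{x\to\infty}H_1(x)<1$, part~(1) of Corollary~\ref{cor: for the sup assumption} produces a positive $f$ with $f(x)\to\infty$, $\mathcal Af(x)<0$ for large $x$, and
\[
\limsup_{x\to\infty}\mathcal Af(x)\;\le\;-\Big(\tfrac{\epsilon}{2}\Big)^{2}\liminf_{x\to\infty}\frac{v(x)}{x^{2}(\log x)^{2-\epsilon/2}}.
\]
Because $v(x)\gtrsim x^{c}$ with $c>2$, the quotient is $\gtrsim x^{c-2}(\log x)^{-(2-\epsilon/2)}\to\infty$, so the $\liminf$ is $+\infty$ and $\limsup_{x\to\infty}\mathcal Af(x)=-\infty<0$, as required.

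For case (a) the plan is to take the pure power $f(x)=x^{p}\mathbbm{1}_{\{x>1\}}+\mathbbm{1}_{\{x\le1\}}$ and choose the exponent $p$ in a window determined by the hypothesis. Setting $q=0$ in Proposition~\ref{prop: generators}(1) gives, as $x\to\infty$,
\[
\mathcal Af(x)=p\,x^{p-2}\bigl\{m(x)x+(p-1)v(x)+O(v(x)/x)\bigr\}=p\,x^{p-2}v(x)\bigl\{J(x)+p-1+O(1/x)\bigr\},
\]
where $v(x)>0$ for large $x$ since $v(x)\gtrsim x^{c}$. The bracket $\{J(x)+p-1+O(1/x)\}$ has $\limsup$ equal to $\limsup_{x\to\infty}J(x)+p-1$, which is strictly negative exactly when $p<1-\limsup_{x\to\infty}J(x)$; meanwhile $x^{p-2}v(x)\gtrsim x^{\,p-2+c}\to\infty$ whenever $p>2-c$. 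The hypothesis $\limsup_{x\to\infty}J(x)<c-1$ gives $1-\limsup_{x\to\infty}J(x)>2-c$, and since $c\le2$ we also have $1-\limsup_{x\to\infty}J(x)>0$; hence we may pick $p$ with $\max\{0,\,2-c\}<p<1-\limsup_{x\to\infty}J(x)$. With such a $p$ we get $f(x)=x^{p}\to\infty$ and, for large $x$, $\mathcal Af(x)\le -\delta_{0}\,p\,x^{p-2}v(x)$ for some $\delta_{0}>0$, so $\limsup_{x\to\infty}\mathcal Af(x)=-\infty<0$. The preceding corollary then gives positive recurrence.

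The only delicate point is this choice of $p$ in case (a): it must be small enough that the drift factor $J(x)+p-1$ becomes asymptotically negative, yet at least $2-c$ so that the prefactor $x^{p-2}v(x)$ does not vanish — and it is precisely the gap $\limsup_{x\to\infty}J(x)<c-1$ that keeps this window nonempty. The remaining steps are routine: the generator expansions of Proposition~\ref{prop: generators} and Corollary~\ref{cor: for the sup assumption}, together with the observation that an $O(1/x)$ term cannot destroy a strictly negative $\limsup$.
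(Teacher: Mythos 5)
Your proof is correct and follows essentially the same route as the paper: case (b) via Corollary~\ref{cor: for the sup assumption} with the $\liminf$ forced to $+\infty$ by $v(x)\gtrsim x^c$, $c>2$, and case (a) via a power Lyapunov function $x^p$ whose exponent lies in the window opened by the gap $\limsup_{x\to\infty}J(x)<c-1$. The paper's specific choice $p=2-c+\epsilon/2$ (with $\limsup J\le c-1-\epsilon$) is just one point of the window $\max\{0,2-c\}<p<1-\limsup_{x\to\infty}J(x)$ that you identify.
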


\begin{proof}
When (b) holds, Corollary~\ref{cor: for the sup assumption} implies the existence of a positive function $f$ on $\S$ such that $\dlim_{x \to \infty} f(x) = \infty$ and $\dlimsup_{x \to \infty} \A f(x) = -\infty$. 

Now assume (a). Then there exists $\epsilon > 0$ such that
\begin{align}\label{eq: ergo}
        \limsup_{x \to \infty}J(x) \le c-1-\epsilon. 
    \end{align}
For this $\epsilon$, define a positive function $f$ on $\S$ by $f(x) = x^{2 - c + \epsilon/2}$, which increases to infinity as $x \to \infty$. 
By Proposition~\ref{prop: generators}, as $x \to \infty$,
\begin{align*}
            \mathcal A f(x) &=(2-c+\epsilon/2)\frac{x^{\epsilon/2}}{x^c}\left\{m(x)x+(1-c+\epsilon/2)v(x)+O\left(\frac{v(x)}{x}\right)\right\} \\ &=(2-c+\epsilon/2)\frac{x^{\epsilon/2}v(x)}{x^c}\left\{J(x)-(c-1-\epsilon) - \epsilon/2 +O\left(\frac{1}{x}\right)\right\}.
    \end{align*}
Hence, by \eqref{eq: ergo} and $v(x) \gtrsim x^c$, we have $\dlimsup_{x \to \infty} \A f(x) = -\infty$.
\end{proof}

Now we turn to null recurrence. We begin with a result on the tail behavior of stationary measures for discrete-time Markov chains $\hat X$ on countable state spaces $\hat \S$. A measure $\nu$ on $\hat \S$ is a \textit{stationary measure} of $\hat X$ if $P_\nu(\hat X_n = x) := \sum_y P_y(\hat X_n = x)\nu(y)$ for each $x \in \hat \S$ and each $n$. 
The existence and uniqueness of such $\nu$ (up to a multiplicative constant) are guaranteed when $\hat X$ is irreducible and recurrent. 
\begin{prop}[\cite{XuHansenWiuf2022}, Proposition 16]\label{prop: null}
 Let $\hat X$ be an irreducible, recurrent discrete-time Markov chain on $\hat \S$
 with a unique (up to a multiplicative constant) stationary measure \(\nu\).
Let  $g$  be a nonnegative function defined on $\hat \S$.
Then
\begin{align}\label{eq:null recurrence, infinity}
    \sum_{x \in \hat \S} g(x) \nu(x) = \infty
\end{align}
if there exist a finite set \(A \subseteq \hat \S\), a state \(x_0 \in A\) 
, and non-negative functions \(f_1\) and \(f_2\) such that

\begin{enumerate}[(i)]
    \item \( \dlim_{x \to \infty} f_1(x) = \infty \) and \( \dlim_{x \to \infty} \dfrac{f_2(x)}{f_1(x)} = \infty \),
    \item  $E_{x}[f_1(\hat X_{1})]\ge f_1(x)$ for each $x \in \hat \S \setminus A$,
    \item \( \mathbb{E}_{x_0} [ f_2(\hat X_n) \mathbf{1}_{\hat \tau_A > n} ] \) is finite for all \( n \) where $\hat \tau_A=\inf\{n\ge 1: \hat X_n\in A\}$, and
    \item  \( \mathbb{E}_{x}[f_2(\hat X_{1})] \leq g(x) + f_2(x) \) for each $x \in \hat\S \setminus A$.
\end{enumerate}

\end{prop}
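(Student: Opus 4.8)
The plan is to reduce the claim to showing that the $g$-potential $w(y):=\mathbb{E}_y\!\big[\sum_{n=0}^{\hat \tau_A-1}g(\hat X_n)\big]$ is infinite at a single state just outside $A$, and to get that by an excursion argument driven by the two Lyapunov functions. First I would enlarge $A$ to the finite set $\{x:f_1(x)\le\max_{A}f_1\}$ (finite since $f_1\to\infty$, still containing $x_0$; hypotheses (ii)--(iv) only weaken as $A$ grows), so that one may assume $f_1(x)>\max_{A}f_1=:K_1$ for all $x\notin A$ and, since $f_2\to\infty$ as well, that all sublevel sets of $f_1$ and $f_2$ are finite. As $\hat X$ is irreducible and recurrent and $A$ is finite, the chain watched on $A$ is positive recurrent with a stationary law $\pi_A$ charging all of $A$, and a standard cycle argument shows that $x\mapsto\sum_{a\in A}\pi_A(a)\,\mathbb{E}_a[\sum_{n=0}^{\hat \tau_A-1}\mathbf{1}\{\hat X_n=x\}]$ is $\hat X$-stationary, hence is $\nu$ up to a constant. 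Expanding the $A$-excursion at its first step gives $\sum_x g(x)\nu(x)=\sum_{a\in A}\pi_A(a)\big(g(a)+\sum_{y\notin A}\mathbb{P}_a(\hat X_1=y)\,w(y)\big)$, so, fixing $a^*\in A$ and $y^*\notin A$ with $\mathbb{P}_{a^*}(\hat X_1=y^*)>0$ (such a pair exists because the chain leaves $A$), it suffices to prove $w(y^*)=\infty$; I will in fact prove $w(y)=\infty$ for every $y\notin A$.

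Fix $y\notin A$ and a large level $H$; let $\hat \tau_A$ be the hitting time of $A$ and $\rho_R:=\inf\{n:f_1(\hat X_n)\ge R\}$. Hypothesis (iv) says precisely that $N_n:=f_2(\hat X_n)-\sum_{k<n}g(\hat X_k)$ is a supermartingale while the chain is outside $A$; stopping it at $\rho_R\wedge\hat \tau_A$ — where for each fixed time the stopped process is integrable, which is where the finiteness of sublevel sets, the bounded jump sizes and condition (iii) enter — and sending the time index to infinity via Fatou's lemma gives
\[
w(y)+f_2(y)\ \ge\ H\,\mathbb{P}_y(\rho_R<\hat \tau_A)\qquad\text{whenever }\ R>R(H):=\sup_{\{f_2<H\}}f_1 ,
\]
since then $\{f_1\ge R\}\subseteq\{f_2\ge H\}$ and (as $\{f_2<H\}$ is finite) $R(H)<\infty$. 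Dually, hypothesis (ii) makes $f_1(\hat X_n)$ a submartingale outside $A$, and optional stopping at $\rho_R\wedge\hat \tau_A$ gives $\mathbb{P}_y(\rho_R<\hat \tau_A)\ge (f_1(y)-K_1)/R'(R)$, where $R'(R)$ is an almost-sure bound for $f_1(\hat X_{\rho_R})$; since $\hat X$ has bounded jump sizes and $f_1$ is regularly varying (of the form $x^p(\log x)^q$ in the applications), the overshoot is tame, $R'(R)=O(R)$.

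What lets the two estimates interlock is the separation hypothesis $f_2/f_1\to\infty$, through $R(H)=o(H)$: for each $\epsilon>0$ the set $\{f_1>\epsilon f_2\}$ is finite, so $f_1\le\epsilon f_2+C_\epsilon$ pointwise, hence $\sup_{\{f_2<H\}}f_1\le\epsilon H+C_\epsilon$; letting $H\to\infty$ then $\epsilon\downarrow0$ gives $R(H)/H\to0$. With $R=R(H)+1$, chaining the displays yields
\[
w(y)+f_2(y)\ \ge\ H\cdot\frac{f_1(y)-K_1}{R'(R(H)+1)}\ \ge\ \frac{f_1(y)-K_1}{C}\cdot\frac{H}{R(H)+1},
\]
whose right-hand side tends to $\infty$ as $H\to\infty$ because $R(H)=o(H)$ and $f_1(y)>K_1$. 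As $f_2(y)<\infty$, this forces $w(y)=\infty$ for every $y\notin A$, in particular $w(y^*)=\infty$, and therefore $\sum_x g(x)\nu(x)\ge\pi_A(a^*)\,\mathbb{P}_{a^*}(\hat X_1=y^*)\,w(y^*)=\infty$.

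The main obstacle I anticipate is exactly this balancing: one must ensure that the (decaying) probability that an excursion from $y$ ever reaches the level set $\{f_2\ge H\}$, multiplied by the (growing) reward $H$ that reaching it contributes to $w$, still diverges — and this is the single place where $f_2/f_1\to\infty$ is used, through $R(H)=o(H)$. The rest is bookkeeping: the optional-stopping steps require uniform integrability of the stopped processes, furnished by the finiteness of the sublevel sets of $f_1,f_2$ together with condition (iii), and by the bounded jump sizes of $\hat X$, which also keep the overshoot $R'(R)$ comparable to $R$.
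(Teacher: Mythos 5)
The paper itself gives no proof of this statement --- it is imported verbatim from \cite{XuHansenWiuf2022} (Proposition~16) --- so your attempt can only be judged on its own terms. Your overall architecture is sound and matches the standard route: represent $\nu$ through excursions from the finite set $A$, reduce to showing $w(y)=\mathbb E_y[\sum_{n<\hat\tau_A}g(\hat X_n)]=\infty$ for some $y\notin A$ reachable from $A$, and use (iv) to turn $f_2(\hat X_n)-\sum_{k<n}g(\hat X_k)$ into a supermartingale so that $w(y)+f_2(y)\ge \mathbb E_y[f_2(\hat X_{\sigma})]$ for suitable stopping times $\sigma\le\hat\tau_A$. The enlargement of $A$, the $\pi_A$-weighted cycle formula, and the Fatou step giving $w(y)+f_2(y)\ge H\,\mathbb P_y(\rho_R<\hat\tau_A)$ are all fine.

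The gap is in the companion lower bound $\mathbb P_y(\rho_R<\hat\tau_A)\ge (f_1(y)-K_1)/R'(R)$ with $R'(R)=O(R)$. The proposition is stated for an \emph{arbitrary} irreducible recurrent discrete-time chain and \emph{arbitrary} nonnegative $f_1,f_2$: there is no bounded-jump hypothesis on $\hat X$ and no regularity hypothesis on $f_1$, yet your overshoot control invokes both. Optional stopping of the submartingale $f_1(\hat X_{n\wedge\rho_R\wedge\hat\tau_A})$ only yields $\mathbb E_y[f_1(\hat X_{\rho_R})\mathbbm 1_{\{\rho_R<\hat\tau_A\}}]\ge f_1(y)-K_1$; if the overshoot $f_1(\hat X_{\rho_R})$ can be of order, say, $e^R$ rather than $O(R)$, then $\mathbb P_y(\rho_R<\hat\tau_A)$ may decay much faster than $1/R$, and the product $H\,\mathbb P_y(\rho_{R(H)+1}<\hat\tau_A)$ need not diverge. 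So as a proof of the stated proposition the argument breaks at exactly the step you flagged as the crux. The fix is to bypass level-crossing probabilities altogether: from (ii), $\mathbb E_y[f_1(\hat X_n)\mathbbm 1_{\{\hat\tau_A>n\}}]\ge f_1(y)-K_1>0$ for all $n$; since $f_2/f_1\to\infty$, for every $M$ one has $f_2\ge M f_1$ off a finite set $F_M$, and recurrence gives $\mathbb P_y(\hat X_n\in F_M,\ \hat\tau_A>n)\to 0$, whence $\limsup_n \mathbb E_y[f_2(\hat X_n)\mathbbm 1_{\{\hat\tau_A>n\}}]\ge M(f_1(y)-K_1)$ for every $M$, i.e.\ this $\limsup$ is infinite. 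Combining with the telescoped form of (iv), $\mathbb E_y[f_2(\hat X_n)\mathbbm 1_{\{\hat\tau_A>n\}}]\le f_2(y)+w(y)$ (whose finiteness at each fixed $n$ is what condition (iii) guarantees), forces $w(y)=\infty$ with no assumption on jumps or on the shape of $f_1$. Within the scope of this paper's applications (embedded chains with jumps in a finite set $\Gamma$ and $f_1(x)\sim x^p(\log x)^q$) your version does go through, but it does not prove the proposition as stated.
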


When $X$ is recurrent, its embedded chain $\hat X$ is also recurrent and admits a unique stationary measure $\nu$ up to a multiplicative constant. Then $\pi(x) := g(x)\nu(x)$ is the stationary measure of $X$, since $P_\pi(X(t) = x) = \pi(x)$ for each $x$ and $t$, where $g(x) = \dfrac{1}{\sum_\eta \lambda_\eta(x)}$. 
Hence, to establish null recurrence of $X$, it suffices to verify \eqref{eq:null recurrence, infinity}.  

Condition $(iii)$ of Proposition~\ref{prop: null} follows from the boundedness of the jump set $\Gamma$. Moreover, for any $h \in \text{Dom}_+(\A)$,
 \begin{align*}
     E_x[h(\hat X_1)]-h(x) = g(x)\mathcal A h(x) \quad \text{for all $x \in \mathbb S$.}  
 \end{align*}
  Thus, conditions $(ii)$ and $(iv)$ of Proposition~\ref{prop: null} can be rewritten in terms of the infinitesimal generator of $X$. This yields the following corollary.
\begin{cor}\label{cor: null}
    When $X$ is recurrent, $X$ is null recurrent if there exists non-negative functions $f_1,f_2$ such that
    \begin{enumerate}[(a)]
        \item [(i)] $\dlim_{x \to \infty} f_1(x) = \infty$ and $\dlim_{x \to \infty} \dfrac{f_2(x)}{f_1(x)} = \infty$,
        \item [(ii)] $\A f_1(x) \ge 0$ for large $x$, and
        \item [(iii)] $\A f_2(x) \lesssim 1$.
    \end{enumerate}
\end{cor}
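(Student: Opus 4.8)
The plan is to verify the three hypotheses $(i)$--$(iii)$ of Proposition~\ref{prop: null} for an appropriate choice of $f_1$ and $f_2$, and then read off null recurrence from \eqref{eq:null recurrence, infinity} via the identity $\pi(x) = g(x)\nu(x)$ recorded just above the statement. Since $X$ is assumed recurrent, its embedded chain $\hat X$ is recurrent and carries a unique stationary measure $\nu$ (up to scaling), and $\pi(x) = g(x)\nu(x)$ with $g(x) = 1/\sum_\eta \lambda_\eta(x)$ is the (necessarily non-summable, once we are done) stationary measure of $X$. So it suffices to show $\sum_{x} g(x)\nu(x) = \infty$, which Proposition~\ref{prop: null} delivers once conditions $(i)$--$(iv)$ there hold with this $g$.

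First I would translate conditions $(ii)$ and $(iv)$ of Proposition~\ref{prop: null} into generator inequalities. Using the stated identity $E_x[h(\hat X_1)] - h(x) = g(x)\mathcal A h(x)$, valid for every $h \in \text{Dom}_+(\A)$ (and under Assumption~\ref{basic assumption} every positive function lies in $\text{Dom}_+(\A)$), condition $(ii)$ becomes $\mathcal A f_1(x) \ge 0$ for $x \notin A$ and condition $(iv)$ becomes $g(x)\mathcal A f_2(x) \le g(x)$, i.e. $\mathcal A f_2(x) \le 1$, for $x \notin A$; the latter is exactly the hypothesis $\A f_2(x) \lesssim 1$ after absorbing the constant into a larger finite exceptional set $A$. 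Condition $(iii)$ of Proposition~\ref{prop: null}, the finiteness of $\mathbb E_{x_0}[f_2(\hat X_n)\mathbf 1_{\hat\tau_A > n}]$, follows because the jump set $\Gamma$ is finite (Assumption~\ref{basic assumption}(a)): in $n$ steps the embedded chain can move at most a bounded distance, so $f_2(\hat X_n)\mathbf 1_{\hat\tau_A > n}$ is a bounded function of a single state and has finite expectation; this is precisely the remark made in the paragraph preceding the corollary. Condition $(i)$ is identical in the two statements. Thus conditions $(i)$--$(iii)$ of the corollary imply all of $(i)$--$(iv)$ of Proposition~\ref{prop: null} with $A$ taken large enough to contain the finitely many bad points, and with $x_0$ any fixed state of $A$.

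The remaining point is the bookkeeping that guarantees the exceptional sets can be merged. Hypotheses $(ii)$ and $(iii)$ of the corollary only assert the inequalities ``for large $x$'' and ``$\lesssim 1$'', i.e. outside some finite set; since $X$ is irreducible on a countable state space, one may enlarge $A$ to a finite set containing all exceptional points for $f_1$, all exceptional points for $f_2$, a constant $x_0$, and enough of the state space that the constant hidden in $\lesssim$ is dominated — note that on the complement of a finite set $g$ is bounded below away from $0$ only if needed, but here we merely need $\mathcal A f_2 \le 1$, which follows by rescaling $f_2$ by a constant, an operation that preserves $(i)$. Once $A$ is fixed this way, Proposition~\ref{prop: null} yields $\sum_x g(x)\nu(x) = \infty$, hence $\pi$ is not normalizable, hence $X$ is not positive recurrent; combined with the standing assumption that $X$ is recurrent, $X$ is null recurrent by definition.

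I do not expect a serious obstacle here: the corollary is essentially a restatement of Proposition~\ref{prop: null} through the generator dictionary, and the only mild care needed is the finite-set merging and the constant rescaling of $f_2$. If anything, the one step to state carefully is condition $(iii)$ of Proposition~\ref{prop: null}, where the finiteness of $\Gamma$ must be invoked explicitly to bound the range of $\hat X_n$ before the first return to $A$.
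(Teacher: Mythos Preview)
Your proposal is correct and follows essentially the same approach as the paper: the corollary is obtained from Proposition~\ref{prop: null} by translating conditions $(ii)$ and $(iv)$ there via the identity $E_x[h(\hat X_1)]-h(x)=g(x)\mathcal A h(x)$, observing that condition $(iii)$ is automatic from the boundedness of $\Gamma$, and concluding that $\sum_x g(x)\nu(x)=\infty$ forces the stationary measure $\pi$ to be non-summable. Your additional remarks on merging the finite exceptional sets and rescaling $f_2$ to absorb the constant in $\lesssim$ are exactly the bookkeeping the paper leaves implicit.
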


We now derive simple conditions on $m(x)$ and $v(x)$ for null recurrence of $X$.
\begin{thm}[Null recurrence]\label{thm: null}
    When $X$ is recurrent, $X$ is null recurrent if there exists a constant $0<c\le 2$ such that $v(x) \lesssim x^{c}$ and either 
    \begin{enumerate}[(a)]
        \item
         $c-1 < \dliminf_{x \to \infty} J(x) \le \dlimsup_{x \to \infty} J(x)< \infty$, or
    \item 
   $-1 < \dliminf_{x\to \infty} H_{c-1}(x)\le \dlimsup_{x\to \infty} H_{c-1}(x)< \infty$.
    \end{enumerate}
\end{thm}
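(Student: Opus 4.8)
The plan is to apply Corollary~\ref{cor: null}. Since $X$ is assumed recurrent, it is enough to produce non-negative functions $f_1,f_2$ on $\S$ with (i) $\dlim_{x\to\infty}f_1(x)=\infty$ and $\dlim_{x\to\infty}f_2(x)/f_1(x)=\infty$, (ii) $\A f_1(x)\ge 0$ for all large $x$, and (iii) $\A f_2(x)\lesssim 1$. I would draw both functions from the Foster--Lyapunov family $f(x)=x^{p}(\log x)^{q}\mathbbm{1}_{\{x>1\}}+\mathbbm{1}_{\{x\le 1\}}$, always with the critical power $p=2-c$ and with two log-exponents $q_1<q_2$ still to be chosen. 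The only analytic ingredient is Proposition~\ref{prop: generators}: substituting the identities $m(x)x=J(x)v(x)$ and $J(x)-(c-1)=H_{c-1}(x)/\log x$ into its expansion, the leading part of $\A f(x)$ becomes a positive constant times $x^{-c}(\log x)^{q-1}v(x)$ times a bracket of the form $H_{c-1}(x)+q+o(1)$ when $c<2$, and it becomes $q\,x^{-2}(\log x)^{q-2}v(x)$ times $H_{1}(x)+q-1+o(1)$ when $c=2$ (the first line of Proposition~\ref{prop: generators} vanishing because $p=0$).

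With this expansion, conditions (ii) and (iii) reduce to arithmetic on $q_1,q_2$. For (ii) I need the bracket with $q=q_1$ to be eventually nonnegative: in case~(b) this holds once $q_1>-\dliminf_{x\to\infty}H_{c-1}(x)$ (and, when $c=2$, also $q_1>\max\{0,\,1-\dliminf_{x\to\infty}H_{1}(x)\}$, the additional requirement $q_1>0$ being forced by $f_1\to\infty$), whereas in case~(a) the stronger bound $\dliminf_{x\to\infty}J(x)>c-1$ makes $J(x)-(c-1)$ itself bounded below by a positive constant, so the bracket stays positive for essentially any $q_1$. For (iii) I use that $J(x)-(c-1)$ is bounded above in case~(a) (equivalently, $H_{c-1}(x)$ is bounded in case~(b), which also forces $J(x)\to c-1$): combined with $v(x)\lesssim x^{c}$, Proposition~\ref{prop: generators} yields $\A f_2(x)\lesssim(\log x)^{r}$ with $r=q_2$ (case~(a), $c<2$), with $r=q_2-1$ (case~(b) and $c<2$, or case~(a) and $c=2$), or with $r=q_2-2$ (case~(b) and $c=2$), so (iii) holds as soon as $q_2$ lies below the corresponding explicit threshold ($0$, $1$, or $2$). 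In each of the four sub-cases the admissible range of $q_1$ coming from (ii) and the admissible range of $q_2$ coming from (iii) overlap in such a way that one may take $q_2$ strictly above $q_1$; in case~(b) the nonemptiness of this overlap is exactly the content of the strict inequality $\dliminf_{x\to\infty}H_{c-1}(x)>-1$, whereas in case~(a) the strict inequality $\dliminf_{x\to\infty}J(x)>c-1$ is what lets $\A f_1\ge 0$ survive the lower-order remainders. Having fixed such a pair $q_1<q_2$, the function $f_1(x)=x^{2-c}(\log x)^{q_1}$ increases to infinity and $f_2(x)/f_1(x)=(\log x)^{q_2-q_1}\to\infty$, which is (i).

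I expect the main difficulty to be bookkeeping rather than anything conceptual. One must push the expansion of $\A f$ in Proposition~\ref{prop: generators} one term past the leading order---down to the $(\log x)^{q-1}$, respectively $(\log x)^{q-2}$, scale---and carry the $O(v(x)/x)$ remainders through, so that the $o(1)$ error term in the bracket is genuinely negligible against the positive margins supplied by the strict hypotheses. One must also treat $c=2$ as a genuinely separate case: there $x^{2-c}$ collapses to a constant, the first line of the generator formula disappears, $f_1$ must be a positive power of $\log x$ rather than of $x$, and all the log-exponent thresholds shift by one. Once the four sub-cases---(a)/(b) combined with $c<2$ or $c=2$---are organized this way, verifying (i)--(iii) is a direct substitution into Proposition~\ref{prop: generators} using only $v(x)\lesssim x^{c}$ and the assumed one-sided bounds on $J$ or $H_{c-1}$; no probabilistic input beyond Corollary~\ref{cor: null} is needed.
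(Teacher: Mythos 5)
Your proposal is correct and takes essentially the same approach as the paper: it applies Corollary~\ref{cor: null} with two Lyapunov functions from the family $x^{p}(\log x)^{q}$, splits into the cases $c=2$ versus $c<2$ and (a) versus (b), and reads off conditions (ii)--(iii) from the generator expansion in Proposition~\ref{prop: generators}, with the strict inequalities $\dliminf J>c-1$ and $\dliminf H_{c-1}>-1$ providing exactly the room needed to fit $q_1<q_2$ between the two thresholds. The only cosmetic difference is in case~(a) with $c<2$, where the paper separates the two functions by distinct powers $p_1=2-c-\epsilon/2<p_2=2-c$ with no logarithms, while you keep the common power $2-c$ and separate them by log-exponents $q_1<q_2\le 0$; both choices verify (i)--(iii) equally well.
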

\begin{proof}
    We consider two cases, $c = 2$ and $c < 2$, and under each case construct positive functions $f_1, f_2$ on $\S$ satisfying conditions $(i)$–$(iii)$ of Corollary~\ref{cor: null}.

       \vspace{0.5cm}
    \noindent \textbf{Case 1: $c = 2$.} Define $f_i(x) = (\log x)^{q_i}\mathbbm{1}_{\{x > 1\}} + \mathbbm{1}_{\{x \le 1\}}$ for $i=1,2$, where $0 < q_1 < q_2$. Then condition $(i)$ holds.  
By Proposition~\ref{prop: generators}, for each $i$ as $x \to \infty$,
\begin{align}
        \mathcal A f_i(x) &=\frac{q_i(\log x)^{q_i-1}}{x^2}\left\{m(x)x -v(x)+(q_i-1)\frac{v(x)}{\log x}+O\left(\frac{v(x)}{x}\right)\right\} \notag\\ &= \frac{q_i(\log x)^{q_i-1}v(x)}{x^2}\left\{J(x) -1 +\frac{q_i-1}{\log x}+O\left(\frac{1}{x}\right)\right\} \label{eq: null 1} \\ &=\frac{q_i(\log x)^{q_i-2}v(x)}{x^2}\left\{H_1(x)+(q_i-1)+O\left(\frac{\log x}{x}\right)\right\} \label{eq: null 2}
        \end{align}
If $(a)$ holds, since $v(x) \lesssim x^2$, we have $\A f_i(x) \ge 0$ for large $x$ and $\A f_i(x) \lesssim (\log x)^{q_i - 1}$. Thus, taking $0 < q_1 < q_2 \le 1$, conditions $(ii)$ and $(iii)$ follow.  

If $(b)$ holds, choose $0 < q_1 < q_2 \le 2$ such that
\begin{align}\label{eq: null 3}
    1-q_2<1-q_1<\liminf_{x\to \infty} H_{1}(x)\le\limsup_{x \to \infty}H_{1}(x)<\infty.
\end{align}
Then $v(x) \lesssim x^2$ together with \eqref{eq: null 2} and \eqref{eq: null 3} implies $\A f_i(x) \ge 0$ for large $x$ and $\A f_i(x) \lesssim (\log x)^{q_i - 2} \lesssim 1$. Hence $(ii)$ and $(iii)$ follow.

\vspace{0.5cm}
\noindent \textbf{Case 2: $0<c < 2$.} Assume $(a)$. Then there exists $0 < \epsilon < 2 - c$ such that $c - 1 + \epsilon \le \dliminf_{x \to \infty} J(x)$. For $i=1,2$, define $f_i(x) = x^{p_i}\mathbbm{1}_{\{x > 1\}} + \mathbbm{1}_{\{x \le 1\}}$, where $p_1 = 2 - c - \epsilon/2$ and $p_2 = 2 - c$. Then $(i)$ holds, and by Proposition~\ref{prop: generators},
\begin{align}\label{eq: x f 2}
    \A f_i(x)=p_i\frac{x^{p_i}v(x)}{x^2}\left\{J(x)+p_i-1+O\left(\frac{1}{x}\right)\right\}.
\end{align}
Since $v(x) \lesssim x^c$ and by the construction of $p_i$, taking limits yields conditions $(ii)$ and $(iii)$.  

Now assume $(b)$. For $i=1,2$, let $f_i(x) = x^{2 - c} (\log x)^{q_i}\mathbbm{1}_{\{x > 1\}} + \mathbbm{1}_{\{x \le 1\}}$, with $q_2 > q_1 > 0$. Then $(i)$ holds, and by Proposition~\ref{prop: generators},
        \begin{align}
            \mathcal Af_i(x)  &=(2-c)x^{-c}v(x)(\log x)^{q_i-1}\left\{H_{c-1}(x)+O\left(\frac{\log x}{x}\right)\right\}\notag\\&+q_ix^{-c}v(x)(\log x)^{q_i-2}\left\{H_{c-1}(x)+q_i-1+(2-c){\log x}+O\left(\frac{\log x}{x}\right)\right\}\notag \\ =&\frac{v(x)}{x^c}(\log x)^{q_i-1}[(2-c)\{H_{c-1}(x)+q_i\}\notag\\
            &~~~~~~~~~~~~~~~~~~~~~+\frac{q_i}{\log x}\{H_{c-1}(x)+q_i-1\}+O\left(\frac{\log x}{x}\right)], \label{eq: null}
        \end{align}
        By assumption $(b)$, we can take $0 < q_1 < q_2 \le 1$ so that 
        \begin{align}\label{eq: null 6}
            -q_2<-q_1<\liminf_{x \to \infty}H_{c-1}(x)\le \limsup_{x \to \infty}H_{c-1}(x) < \infty.
        \end{align}
Then $v(x) \lesssim x^c$, together with \eqref{eq: null} and \eqref{eq: null 6}, implies $\A f_i(x) \ge 0$ for large $x$ and $\A f_i(x) \lesssim (\log x)^{q_i - 1} \lesssim 1$. Thus conditions $(ii)$ and $(iii)$ of Corollary~\ref{cor: null} follow.

\end{proof}

\subsection{Exponential ergodicity}\label{sec:expo}
We apply the following Foster-Lyapunov criterion for exponential ergodicity. 
\begin{prop}[\cite{XuHansenWiuf2022}, Proposition 12]\label{prop: expo}
    Let $\bar X$ be an irreducible continuous-time Markov chain on $\bar \S$. Then $\bar X$ is exponentially ergodic if there exist $C>0$, a finite subset $A \subseteq \bar \S$, and $f \in \text{Dom}_+(\A)$ that increases to infinity such that 
    \begin{align*}
        \A f(x) \le -C f(x) \quad \text{for} \quad x \in \bar \S \setminus A. 
    \end{align*}
\end{prop}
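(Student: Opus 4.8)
The plan is to recognize the hypothesis as the classical geometric (Foster--Lyapunov) drift condition and to extract from it, in order, non-explosivity, positive recurrence, and finally a geometric moment bound on excursions that yields exponential convergence in total variation. First I would put the drift into its standard inhomogeneous form. Since $A$ is finite and $f \in \text{Dom}_+(\A)$, the quantity $b := \max_{x \in A}\big(\A f(x) + C f(x)\big)$ is finite (I may assume $b>0$), so the hypothesis $\A f(x) \le -C f(x)$ on $\bar \S \setminus A$ is equivalent to the global bound
\[
    \A f(x) \le -C f(x) + b\,\mathbbm{1}_A(x) \quad \text{for all } x \in \bar \S .
\]
In particular $\A f(x) \le b$ everywhere, so Proposition~\ref{thm: non-expl} with the constant function $g \equiv b$ gives non-explosivity (which is needed even to make sense of exponential ergodicity). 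Since $f(x) \to \infty$, for every $\delta>0$ we have $\A f(x) \le -\delta$ outside a finite set; this yields recurrence through Corollary~\ref{cor: rec} and then positive recurrence through Proposition~\ref{thm: mt}. Hence $\bar X$ is ergodic with a unique stationary distribution $\pi$, and the only remaining task is to show the convergence is geometric.

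The heart of the argument is a supermartingale estimate on excursions away from $A$. After rescaling $f$ by the positive constant $\min_x f(x)$ so that $f \ge 1$, write $\tau_A := \inf\{t \ge 0 : \bar X(t) \in A\}$. Using the non-explosivity just established, Dynkin's formula applied to $t \mapsto e^{C t} f(\bar X(t))$ shows that this process is a nonnegative supermartingale up to $\tau_A$, because $\A\big(e^{C t} f\big) = e^{C t}(\A f + C f) \le 0$ on $A^c$. Optional stopping then gives
\[
    E_x\big[e^{C \tau_A}\big] \le E_x\big[e^{C \tau_A} f(\bar X(\tau_A))\big] \le f(x) < \infty \quad \text{for } x \in A^c ,
\]
so the hitting time of $A$ carries a finite exponential moment controlled by $f$. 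Because $A$ is finite and $\bar X$ is irreducible and non-explosive, this upgrades to a geometric moment of the return time to a single reference state $x_0 \in A$: each excursion from $A$ has an exponential moment bounded uniformly over the finitely many states of $A$, so $E_{x_0}\big[e^{\epsilon \tau_{x_0}}\big] < \infty$ for some $\epsilon \in (0,C]$, and more generally $E_x[e^{\epsilon \tau_{x_0}}] \lesssim f(x)$.

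The hard part will be the final passage from this geometric return-time moment at the atom $x_0$ to exponential ergodicity in total variation. I would treat $x_0$ as a regeneration point: the successive return times to $x_0$ are i.i.d.\ with a finite exponential moment, so the associated renewal measure approaches its limit geometrically fast, and a coupling of the chain started from $\pi$ with the chain started from $x$ (both run until their first common visit to $x_0$) transfers this geometric rate to $\|P_x(\bar X(t) \in \cdot) - \pi\|_{\TV}$, with prefactor $C(x)$ governed by $E_x[e^{\epsilon \tau_{x_0}}] \lesssim f(x)$. The delicate points are verifying aperiodicity of the regeneration (automatic for an irreducible continuous-time chain, whose holding-time laws are nonlattice) and justifying the Dynkin and optional-stopping steps for the unbounded $f$, both handled by localizing at $t \wedge \tau_A \wedge T_n$ and invoking non-explosion. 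Alternatively, and more economically, once the geometric drift $\A f \le -Cf + b\,\mathbbm{1}_A$ is in hand together with the petiteness of the finite set $A$ (automatic for irreducible countable chains), I would directly invoke the continuous-time $V$-uniform ergodicity theorem of Down--Meyn--Tweedie, which asserts precisely that this drift condition is equivalent to $\|P_x(\bar X(t)\in\cdot)-\pi\|_f \le C f(x)\, e^{-\beta t}$ for some $\beta>0$; since $f \ge 1$ dominates the total-variation norm, exponential ergodicity follows at once.
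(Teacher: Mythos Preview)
The paper does not prove this proposition at all: it is quoted verbatim as Proposition~12 of \cite{XuHansenWiuf2022} and used as a black box in the proof of Theorem~\ref{thm: expo}. So there is no ``paper's own proof'' to compare your attempt against.

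That said, your argument is essentially the standard route to this result and is sound in outline. A couple of minor points: you invoke Corollary~\ref{cor: rec} for recurrence, but that corollary is stated only for the specific chain $X$ on $\S\subseteq\mathbb Z_{\ge 0}$; for the general $\bar X$ on $\bar\S$ you should instead cite Proposition~\ref{thm: hairer}(ii) directly, which applies verbatim. The step where you ``upgrade'' the exponential moment of $\tau_A$ to one for $\tau_{x_0}$ deserves a sentence more of care (you need to control the holding times and finitely many transitions inside $A$, which is routine since $A$ is finite and the chain is irreducible). Your closing remark that one can bypass the regeneration argument by invoking Down--Meyn--Tweedie once the drift is in the form $\A f \le -Cf + b\,\mathbbm 1_A$ with $A$ petite is exactly how this is typically packaged, and is indeed the cleanest justification.
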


Using this proposition, we derive criteria for exponential and non-exponential ergodicity. 
\begin{thm}[Exponential ergodicity]\label{thm: expo}
        Assume that $X$ is positive recurrent. Then $X$ is exponentially ergodic if either
    \begin{enumerate}[(a)]
        \item there exists a constant $c>2$ such that $v(x) \gtrsim x^{c}$ and 
       $ \dlimsup_{x\to \infty} H_1(x)<1$,
        \item  $v(x) \sim x^2$ and $\dlimsup_{x \to \infty} J(x)<1$, or
        \item $1 \lesssim v(x)$ and $\dlimsup_{x \to \infty} J(x)/x<0$.
    \end{enumerate} 
\end{thm}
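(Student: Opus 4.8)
The plan is to invoke the Foster–Lyapunov criterion for exponential ergodicity, Proposition~\ref{prop: expo}: it suffices, in each of the three cases, to exhibit a positive function $f$ on $\S$ that increases to infinity together with a constant $C>0$ such that $\A f(x)\le -Cf(x)$ for all large $x$, so that the finitely many remaining states constitute the excluded finite set $A$. Because $X$ satisfies Assumption~\ref{basic assumption}, every positive real-valued function on $\S$ lies in $\text{Dom}_+(\A)$, so no domain check is needed and the work reduces to choosing $f$ and estimating $\A f$ through the generator expansions of Proposition~\ref{prop: generators}. I will use $f(x)=(\log x)^{q}\mathbbm{1}_{\{x>1\}}+\mathbbm{1}_{\{x\le1\}}$ for case (a), $f(x)=x^{p}\mathbbm{1}_{\{x>1\}}+\mathbbm{1}_{\{x\le1\}}$ for case (b), and $f(x)=e^{px}$ for case (c), each with a small positive exponent chosen according to the gap in the corresponding hypothesis.

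For case (a), Proposition~\ref{prop: generators}(1) with $p=0$ gives $\A f(x)=q\,x^{-2}(\log x)^{q-2}v(x)\{H_1(x)+(q-1)+O((\log x)/x)\}$, exactly as in \eqref{eq: null 2}. Since $\dlimsup_{x\to\infty}H_1(x)<1$, I fix $0<q<1-\dlimsup_{x\to\infty}H_1(x)$ so that $H_1(x)+q-1$ is bounded above by a negative constant for large $x$; hence $\A f(x)\lesssim -v(x)/(x^{2}(\log x)^{2-q})$. Using $v(x)\gtrsim x^{c}$ with $c>2$, the quantity $v(x)/(x^{2}(\log x)^{2})\gtrsim x^{c-2}/(\log x)^{2}$ tends to infinity, so it eventually exceeds any constant multiple of $(\log x)^{q}=f(x)$, which yields $\A f(x)\le-Cf(x)$ for large $x$. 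Case (b) is handled the same way with $q=0$ in Proposition~\ref{prop: generators}(1), which gives $\A f(x)=p\,x^{p-2}v(x)\{J(x)+(p-1)+O(1/x)\}$ as in \eqref{eq: x f 2}; choosing $0<p<1-\dlimsup_{x\to\infty}J(x)$ makes the brace negative and bounded away from $0$ for large $x$, while $v(x)\sim x^{2}$ forces $x^{p-2}v(x)\sim x^{p}=f(x)$, so $\A f(x)\le-Cf(x)$ again.

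For case (c), the hypothesis $\dlimsup_{x\to\infty}J(x)/x<0$ together with the identity $J(x)/x=m(x)/v(x)$ gives $m(x)\le-\delta v(x)$ for large $x$ and some $\delta>0$. With $f(x)=e^{px}$, Proposition~\ref{prop: generators}(2) gives $\A f(x)=e^{px}\{pm(x)+p^{2}v(x)+O(p^{3})O(v(x))\}\le e^{px}\,p\,v(x)\{-\delta+p(1+O(p))\}$, and fixing $p$ small enough that $-\delta+p(1+O(p))\le-\delta/2$ yields $\A f(x)\le-\tfrac{p\delta}{2}\,v(x)e^{px}$. Since $1\lesssim v(x)$ and $f(x)=e^{px}\to\infty$, this is $\le-Cf(x)$ for large $x$, as needed.

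The step I expect to be the main obstacle is upgrading the drift bound from $\A f\le-\delta$ to the geometric form $\A f\le-Cf$, most delicately in case (b), where $f$ and $\A f$ have the same polynomial order $x^{p}$: this forces one to keep track of the precise coefficient $J(x)+p-1$, to pick $p$ strictly below $1-\dlimsup_{x\to\infty}J(x)$, and to absorb the $O(1/x)$ remainder from Proposition~\ref{prop: generators} into that negative coefficient. In case (c) the analogous subtlety is that the expansion in Proposition~\ref{prop: generators}(2) is asymptotic as $p\to0$, so one must first fix $p$ (depending on $\delta$ and the implied constants) and only afterwards let $x\to\infty$. In cases (a) and (b) one must also confirm that the auxiliary $O$-terms produced by the generator expansion are genuinely of lower order than the retained main term — which is precisely where the growth hypotheses $v(x)\gtrsim x^{c}$ with $c>2$ and $v(x)\sim x^{2}$ enter.
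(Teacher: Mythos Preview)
Your proposal is correct and follows essentially the same approach as the paper: in each case you use the same Lyapunov function (a power of $\log x$, a power of $x$, and $e^{px}$, respectively), apply the generator expansions of Proposition~\ref{prop: generators}, and choose the small exponent from the gap in the hypothesis to force $\A f(x)\lesssim -f(x)$ for large $x$, exactly as the paper does with $\epsilon/2$ in place of your $q$ and $p$. Your write-up is in fact somewhat more explicit than the paper's in justifying the passage from $\A f\lesssim -\,(\text{something})$ to $\A f\le -Cf$, but the underlying argument is identical.
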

\begin{proof}
In this proof, under each of the assumptions $(a)$, $(b)$, and $(c)$, we construct a positive function $f$ satisfying the conditions of Proposition \ref{prop: expo}, respectively.

For $(a)$, let $f(x) = (\log x)^{\epsilon/2}\mathbbm{1}_{\{x>1\}}+ \mathbbm{1}_{\{x\le 1\}}$, where $\epsilon>0$ is chosen such that $\limsup_{x \to \infty} H_1(x) \le 1-\epsilon$. Applying \eqref{eq: log x f 2} and using $v(x) \gtrsim x^c$ with $c>2$, we obtain $\A f(x) \lesssim -f(x)$, which ensures exponential ergodicity by Proposition \ref{prop: expo}.

For $(b)$, let $f(x)=x^{\epsilon/2} \mathbbm 1_{\{x>1\}} + \mathbbm 1_{\{x\le 1\}}$with $\epsilon>0$ such that $\dlimsup_{x \to \infty} J(x) \le 1-\epsilon$. By \eqref{eq: x f 2} with $p_i$ replaced by $\epsilon/2$ and $v(x) \sim x^2$, we have $\A f(x) \lesssim -f(x)$, which again implies exponential ergodicity.

For $(c)$, define $f(x) = e^{(\epsilon/2)x}$with sufficiently small $\epsilon>0$. By 2 of Proposition \ref{prop: generators}, as $x \to \infty$,
        \begin{align*}
            \mathcal A f(x) &= (\epsilon/2) e^{(\epsilon/2) x} \{m(x)  + (\epsilon/2) v(x) + O(\epsilon^2)O(v(x))\} \notag\\ &= (\epsilon/2) e^{(\epsilon/2) x}v(x)\left\{\frac{J(x)}{x}+\epsilon - (\epsilon/2) + O\left(\epsilon^2\right)\right\}. 
        \end{align*}
       Since $1 \lesssim v(x)$ and $\dlimsup_{x \to \infty} J(x)/x <0$, we can choose $\epsilon$ so that $\A f(x) \lesssim -f(x)$, which completes the proof.
\end{proof}

Next, we provide sufficient conditions for non-exponential ergodicity. Recall that \cite[Proposition 12]{kingman1964} and \cite[Theorem 2]{tweedie1981criteria} give a necessary and sufficient condition: $\bar X$ is exponentially ergodic if and only if there exists $\rho>0$ such that $\rho<q_x:=\sum_{y \neq x} q_{x,y}$ for all $x \in \S$ and $E_x[e^{\rho \tau_x}]<\infty$ for all $x \in \bar \S$, where $\tau_x$ is the first return time to $x$. Therefore, showing that $E_x[\tau_x^\ell] = \infty$ for some $\ell >0$ and $x \in \bar \S$ implies non-exponential ergodicity.

An existing study provided criteria for an irreducible Markov chain $\bar X$ on $\bar \S$ to have an infinite $\ell$th moment for some $\ell>0$. 
\begin{prop}[\cite{menshikov2014explosion}, Theorem 1.5, Corollary 2.13] \label{prop:non-exponential} Let $\bar X$ be an irreducible continuous-time Markov chain on $\bar \S$.
    Let $f\in \text{Dom}_+(\A)$ such that $|\{x: f(x) \le N \}| < \infty$ for every $N>0$. Let $S_a(f)=\{x\in \bar \S : f(x)\le a\}$. Suppose that there exist positive constants $a, c_1,c_2,k$ and $r >1$ such that 
    \begin{enumerate}[(i)]
        \item $\A f(x) \ge -c_1$ for $x \in \bar \S \setminus S_a(f)$,
        \item $f^r \in \text{Dom}_+(\A)$ and $\A f^r(x) \le c_2 f^{r-1}(x)$ for $x \in \bar \S \setminus S_a(f)$, and 
        \item $f^k \in \text{Dom}_+(\A)$ and $\A^k f(x) \ge 0$ for $x \in \bar \S \setminus S_a(f)$.
    \end{enumerate}
    Then we have that $E_x[\tau^\ell_{\bar \S_a(f)}] = \infty$ for all $x \in \bar \S \setminus S_a(f)$ if $\ell\ge k$, where $\tau_{S_a(f)}$ is the first return time to $S_a(f)$. 
\end{prop}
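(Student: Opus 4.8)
The statement is quoted from \cite{menshikov2014explosion} (Theorem 1.5 and Corollary 2.13), so one option is simply to invoke that reference; for completeness I sketch the semimartingale argument that underlies it. Fix a starting state $x_0 \in \bar{\mathbb S}\setminus S_a(f)$, abbreviate $\sigma := \tau_{S_a(f)}$ and $v_0 := f(x_0) > a$, and work under $P_{x_0}$. The plan is to (i) use hypothesis (iii) to show that a fixed amount of probability mass is retained at large values of $f$ up to any time $t$, (ii) use hypothesis (ii) to show that $f(\bar X)$ grows at most linearly in the $r$-th mean, and then (iii) combine the two via H\"older's inequality to get a polynomial lower bound on $P_{x_0}(\sigma > t)$, from which $E_{x_0}[\sigma^\ell] = \infty$ follows by integrating the tail.

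For the first step, since $f^k \in \text{Dom}_+(\A)$ and $\A f^k \ge 0$ on $\bar{\mathbb S}\setminus S_a(f)$, Dynkin's formula exhibits $f^k(\bar X_{t\wedge\sigma})$ as a nonnegative submartingale, so $E_{x_0}[f^k(\bar X_{t\wedge\sigma})] \ge v_0^k$; splitting on $\{\sigma \le t\}$ and using $f(\bar X_\sigma) \le a$ yields $E_{x_0}[f^k(\bar X_t)\mathbf{1}_{\{\sigma > t\}}] \ge v_0^k - a^k =: \delta > 0$. For the second step, put $\phi(t) := E_{x_0}[f^r(\bar X_{t\wedge\sigma})]$; Dynkin's formula together with hypothesis (ii) gives $\phi(t) \le v_0^r + c_2\int_0^t E_{x_0}[f^{r-1}(\bar X_{s\wedge\sigma})]\,ds \le v_0^r + c_2\int_0^t \phi(s)^{(r-1)/r}\,ds$, the last step by Jensen, and a Gr\"onwall-type comparison yields $\phi(t)^{1/r} \le v_0 + (c_2/r)t$, i.e. $E_{x_0}[f^r(\bar X_{t\wedge\sigma})] \lesssim t^r$. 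For the third step, H\"older's inequality with conjugate exponents $r/k$ and $r/(r-k)$ (for $r > k$) combined with the previous two estimates gives $\delta \le \big(Ct^r\big)^{k/r} P_{x_0}(\sigma > t)^{(r-k)/r}$, hence $P_{x_0}(\sigma > t) \gtrsim t^{-kr/(r-k)}$; therefore $E_{x_0}[\sigma^\ell] = \ell\int_0^\infty t^{\ell-1}P_{x_0}(\sigma > t)\,dt = \infty$ as soon as $\ell \ge kr/(r-k)$.

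Two points require more care and are where the remaining hypotheses and the actual work of \cite{menshikov2014explosion} enter. First, the Dynkin-formula manipulations above must be justified for a general, possibly explosive chain $\bar X$; this is done by localizing along the exit times $\rho_N := \inf\{t : f(\bar X_t) > N\}$ of the finite sublevel sets $\{f \le N\}$ and then passing to the limit, which is exactly why $f$ is assumed to have finite sublevel sets and why $f, f^r, f^k \in \text{Dom}_+(\A)$ is imposed. Second, the crude H\"older step only delivers the threshold $kr/(r-k)$, which is strictly larger than $k$ for the fixed $r$ supplied by the hypotheses; upgrading the conclusion to the sharp range $\ell \ge k$ requires driving $r$ effectively to infinity, and it is here that hypothesis (i) --- the one-sided bound $\A f \ge -c_1$ on the mean drift of $f$, preventing $f$ from decreasing too fast per unit time --- is used to bootstrap the moment estimate of the second step to higher powers of $f$. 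I expect this sharpening of the exponent, rather than any individual computation, to be the main obstacle; note, however, that for the downstream application (non-exponential ergodicity, which only needs $E_x[\tau_x^\ell] = \infty$ for \emph{some} $\ell > 0$) the coarse threshold already obtained from the three steps is enough.
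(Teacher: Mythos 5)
The paper offers no proof of this proposition: it is imported verbatim from Menshikov and Petritis, so your first option --- simply invoking Theorem~1.5 and Corollary~2.13 of \cite{menshikov2014explosion} --- is exactly what the paper does, and that settles the matter. Your additional sketch is a legitimate reconstruction of the semimartingale argument behind the cited result, and the three steps you give (the submartingale lower bound $E_{x_0}[f^k(\bar X_t)\mathbf 1_{\{\sigma>t\}}]\ge v_0^k-a^k$ from $\A f^k\ge 0$, the Jensen--Gr\"onwall upper bound $E_{x_0}[f^r(\bar X_{t\wedge\sigma})]\le (v_0+(c_2/r)t)^r$ from $\A f^r\le c_2 f^{r-1}$, and the H\"older interpolation giving $P_{x_0}(\sigma>t)\gtrsim t^{-kr/(r-k)}$) are each correct as stated, modulo the localization along finite sublevel sets that you rightly flag. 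You are also explicit about the two places where the sketch falls short of the proposition as quoted: the H\"older step needs $r>k$, which the hypotheses do not guarantee, and it only yields $E_{x_0}[\sigma^\ell]=\infty$ for $\ell\ge kr/(r-k)$, which is strictly larger than the claimed threshold $k$; closing that gap (in effect driving $r$ to infinity, with hypothesis (i) controlling the downward drift of $f$) is the content of the reference and is not reproduced in the paper either. Since the paper's application only needs $E_x[\tau_x^\ell]=\infty$ for \emph{some} finite $\ell$ to rule out exponential ergodicity, the weaker threshold your sketch delivers would in fact suffice for everything downstream, provided one also notes that $r$ may be chosen larger than $k$ in that application.
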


Note that $E_x[\tau^\ell_{\bar \S_a(f)}] = \infty$ implies $E_x[\tau_x^{\ell}] = \infty$ for each $\ell > 0$ and for some $x \in S_a(f)$. This is because there exists $x\in S_a(f)$ such that $P_x(\bar X_{T_1} = y)>0$ for some $y\in \bar \S\setminus S_a(f)$. Then we have $E_x[\tau^{\ell}_x] \ge E_y[\tau^\ell_x]P_x[\bar X_{J_1}=y] = \infty$. Using this fact together with Proposition~\ref{prop:non-exponential}, we now obtain criteria for non-exponential ergodicity.

\begin{thm}[Non-exponential ergodicity]
    Assume that $X$ is positive recurrent.
$X$ is non-exponentially ergodic if either 
        \begin{enumerate}
            \item [(a)] $v(x) \lesssim x^c$ for some $c<2$ and $-\infty  < \liminf_{x \to \infty}J(x) \le \limsup_{x \to \infty}J(x) < \infty$, or
            \item [(b)] $v(x) \lesssim x^c$ for some $c <0$.
        \end{enumerate}    
\end{thm}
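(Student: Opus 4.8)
The plan is to treat the two cases separately and in both reduce to the Kingman--Tweedie characterization of exponential ergodicity recalled just before the theorem. Since $X$ is positive recurrent and irreducible it is already ergodic, so it remains only to rule out exponential ergodicity; throughout I assume $\mathbb S$ is infinite, the case of interest. Part (b) is essentially immediate: as $|\eta|\ge 1$ for every $\eta\in\Gamma\setminus\{0\}$, the total jump rate satisfies $q_x=\sum_{\eta\neq 0}\lambda_\eta(x)\le\sum_\eta\eta^2\lambda_\eta(x)=2v(x)\lesssim x^{c}$, and $c<0$ forces $q_x\to 0$ as $x\to\infty$; hence no $\rho>0$ obeys $\rho<q_x$ for all $x\in\mathbb S$, so \cite{kingman1964,tweedie1981criteria} preclude exponential ergodicity.

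For part (a) I will instead produce, via Proposition~\ref{prop:non-exponential}, a state whose return time has an infinite $\ell$th moment. Fix $p\in(0,2-c)$ and take $f(x)=x^{p}\mathbbm 1_{\{x>1\}}+\mathbbm 1_{\{x\le 1\}}$ (the truncation near $0$ only keeps $f,f^r,f^k$ positive); then $f$ increases to infinity, lies in $\text{Dom}_+(\A)$ under Assumption~\ref{basic assumption}, and has finite sublevel sets $S_a(f)$. Since $f^r$ and $f^k$ are again of the form $x^{s}\mathbbm 1_{\{x>1\}}+\mathbbm 1_{\{x\le 1\}}$, part (1) of Proposition~\ref{prop: generators} with $q=0$ gives, for each exponent $s>0$,
\begin{align*}
    \A(x^{s})(x)=s\,x^{s-2}\bigl\{m(x)x+(s-1)v(x)+O(v(x)/x)\bigr\}=s\,x^{s-2}v(x)\bigl\{J(x)+s-1+O(1/x)\bigr\}.
\end{align*}
Here the second equality uses $v(x)>0$, which holds by irreducibility. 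Take $r=2$ and pick $k>0$ with $pk>1-\dliminf_{x\to\infty}J(x)$, which is possible with $k$ finite precisely because the hypothesis of (a) bounds $\dliminf J$ from below. I then check the three conditions of Proposition~\ref{prop:non-exponential} on $\mathbb S\setminus S_a(f)$ for large $a$: (i) since $J$ is bounded and $v(x)\lesssim x^{c}$, $\A f(x)=O(x^{p+c-2})\to 0$ as $p<2-c$, so $\A f(x)\ge -c_1$; (ii) the same estimate gives $\A f^r(x)=O(x^{pr-2+c})$, and $pr-2+c<p(r-1)$ yields $\A f^r(x)\le c_2 x^{p(r-1)}=c_2 f^{r-1}(x)$; (iii) by the choice of $k$ the factor $J(x)+pk-1+O(1/x)$ is positive for large $x$, so $\A f^k(x)\ge 0$ (I read the condition ``$\A^k f\ge 0$'' in Proposition~\ref{prop:non-exponential} as $\A f^k\ge 0$). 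Proposition~\ref{prop:non-exponential} then yields $E_x[\tau_{S_a(f)}^{\ell}]=\infty$ for every $x\in\mathbb S\setminus S_a(f)$ and every $\ell\ge k$.

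To finish part (a) I invoke the observation recorded just before the theorem: choosing $x'\in S_a(f)$ that jumps with positive probability to some $y\in\mathbb S\setminus S_a(f)$, the strong Markov property together with $\tau_{x'}\ge\tau_{S_a(f)}$ under $P_y$ (since $x'\in S_a(f)$) gives $E_{x'}[\tau_{x'}^{\ell}]\ge P_{x'}(X_{T_1}=y)\,E_y[\tau_{x'}^{\ell}]=\infty$ for $\ell\ge k$; since $e^{\rho t}\ge(\rho t)^{\lceil k\rceil}/\lceil k\rceil!$, this forces $E_{x'}[e^{\rho\tau_{x'}}]=\infty$ for every $\rho>0$, so once more no admissible $\rho$ exists and $X$ is non-exponentially ergodic.

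The main obstacle is the bookkeeping in part (a): one must carry the $O(v(x)/x)$ error terms simultaneously through $\A f$, $\A f^r$, and $\A f^k$ and confirm that a single choice of $p\in(0,2-c)$, together with $r=2$ and a finite but large enough $k$ (finite because $\dliminf J>-\infty$), makes conditions (i)--(iii) hold on one common region $\mathbb S\setminus S_a(f)$. One also needs that irreducibility guarantees $v(x)>0$ so that the sign of $\A f^k$ is governed by $J(x)+pk-1$; that the truncation keeps $f,f^r,f^k$ positive, hence in $\text{Dom}_+(\A)$; and, a minor point, a confirmation of how condition (iii) of Proposition~\ref{prop:non-exponential} should be read.
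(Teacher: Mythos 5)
Your proposal is correct and follows essentially the same route as the paper: part (b) via the vanishing total jump rate $q_x\lesssim v(x)\to 0$ combined with the Kingman--Tweedie characterization, and part (a) via Proposition~\ref{prop:non-exponential} applied to a power Lyapunov function (the paper takes the exponent exactly $2-c$ where you take $p\in(0,2-c)$, but the verification of conditions (i)--(iii) is the same computation through \eqref{eq: x f 2}). Your reading of condition (iii) as $\A f^k\ge 0$ matches the paper's own usage, and your explicit passage from $E_x[\tau_{S_a(f)}^\ell]=\infty$ to $E_{x'}[e^{\rho\tau_{x'}}]=\infty$ is the argument the paper records in the paragraph preceding the theorem.
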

\begin{proof}
In this proof, under each of the assumptions $(a)$ and $(b)$, we construct a positive function $f$ satisfying the conditions of Proposition~\ref{prop:non-exponential}.

Assume first that $(a)$ holds.  
By Proposition~\ref{prop:non-exponential}, it suffices to find $f$ and constants $r > 1$ and $k > 0$ such that $\dlim_{x \to \infty} f(x) = \infty$, $\A f(x) \gtrsim -1$, $\A f^r(x) \lesssim f^{r-1}(x)$, and $\A f^k(x) \ge 0$ for large $x$, since $\bar \S_a(f)$ is finite for any constant $a$.  
Define $f(x) = x^{2-c}\mathbbm{1}_{\{x > 1\}} + \mathbbm{1}_{\{x \le 1\}}$ on $\S$. Then $\lim_{x \to \infty} f(x) = \infty$.  
Substituting $p_i = 2-c$ in \eqref{eq: x f 2} yields $\A f(x) \gtrsim -1$.  
Fix an arbitrary constant $r > 1$.  
Using the assumption $v(x) \lesssim x^c$ and replacing $p_i$ with $(2-c)r$ in \eqref{eq: x f 2}, we obtain
        \begin{align*}
            \A f^r(x) \lesssim \frac{x^{(2-c)r}v(x)}{x^2} \lesssim x^{(2-c)(r-1)}\sim f(x)^{r-1}.
        \end{align*}
        Finally, take $k > 0$ so that $\liminf_{x \to \infty} \{J(x) + (2-c)k - 1\} > 0$.  
Then, by substituting $(2-c)k$ for $p_i$ in \eqref{eq: x f 2}, we have $\A f^k(x) \ge 0$ for large $x$.

Next, assume that $(b)$ holds.  
Since $v(x) = \frac{1}{2}\sum_{\eta \in \Gamma} \eta^2 \lambda_\eta(x) \lesssim x^c$ for some $c < 0$, the total transition rate $q_x = \sum_{\eta \in \Gamma}\lambda_\eta(x)$ converges to $0$ as $x \to \infty$.  
Then there exists no $\rho > 0$ such that $\rho < q_x$ for all $x \in \S$.

\end{proof}

\section{Criteria for transition rates with Laurent-type asymptotics}\label{sec : main-ctmc}

Under Assumption~\ref{basic assumption}, we further restrict our attention to continuous-time Markov chains $X$ whose transition rates admit a \textit{Laurent-type asymptotic expansion}, which allows us to simplify the criteria for their dynamical properties. 

\begin{defn}
We say that the collection of transition rates $\{\lambda_\eta(x)\}_{\eta \in \Gamma}$ of $X$ is defined with a \emph{Laurent-type asymptotic} if there exists an integer $R \in \Z$ such that 
\begin{align}
\begin{split}\label{eq:laurent}
\lambda_\eta(x) = a_\eta x^R + b_\eta x^{R-1} + O(x^{R-2}) \quad \text{for each \(\eta \in \Gamma\)}, 
\end{split}
\end{align}
where \(a_\eta,b_\eta \in \R\) and \(a_{\eta} > 0\) for at least one \(\eta \in \Gamma\).
Here, $R$ denotes the maximal degree among all transition rates. Note that $a_\eta$ or $b_\eta$ may be zero for some $\eta \in \Gamma$.
\end{defn}

Under this assumption, both $m(x)$ and $v(x)$ admit Laurent-type asymptotic forms whose degrees and coefficients are directly determined as
\begin{align}\label{eq:m and v under laurent}
    m(x) = \alpha x^{R} + \gamma x^{R-1} + O(x^{R-2}), 
    \qquad
    v(x) = \vartheta x^R + O(x^{R-1}),
\end{align}
where
\begin{align}\label{eq : def-ctmc}
  \alpha = \sum_{\eta \in \Gamma} \eta a_\eta, 
  \quad 
  \gamma = \sum_{\eta \in \Gamma} \eta b_\eta,
  \quad \text{and} \quad 
  \vartheta = \frac{1}{2}\sum_{\eta \in \Gamma} \eta^2 a_\eta.
\end{align}

\begin{rem}
If all transition rates are rational functions of $x$, then they satisfy the Laurent-type asymptotic condition.
\end{rem}

In the previous section, we derived sufficient conditions for various dynamical properties in terms of $m(x)$ and $v(x)$. These conditions, however, are not if-and-only-if, and hence do not yield a complete classification. In particular, there exist critical regimes where neither recurrence nor transience can be determined directly. For instance, if
\[
\limsup_{x \to \infty} H_1(x) \ge 1 \ge \liminf_{x \to \infty} H_1(x),
\]
then neither Theorem~\ref{thm: recu} nor Theorem~\ref{thm: tran} applies.  
Under the Laurent-type asymptotic assumption~\eqref{eq:laurent}, however, such ambiguity does not occur. Indeed, we have as $x \to \infty$,
\begin{align*}
    H_1(x) 
    &= \left(1 + O\left(\frac{1}{x}\right)\right)
    \frac{\log x \, (\alpha x^{R+1} + (\gamma-\vartheta)x^R)}{\vartheta x^R}
    + O\left(\frac{\log x}{x}\right).
\end{align*}
Thus,
\begin{align}\label{eq: classification}
    \lim_{x \to \infty} H_1(x)
    = \frac{\log x (\alpha x + \gamma - \vartheta)}{\vartheta}
    =
    \begin{cases}
       -\infty, & \text{if }\alpha \le 0 \text{ and }\min(\alpha, \gamma-\vartheta)<0, \\[3pt]
       0, & \text{if }\alpha = \gamma-\vartheta = 0, \\[3pt]
       -\infty, & \text{if }\alpha \ge 0 \text{ and }\max(\alpha, \gamma-\vartheta)>0,
    \end{cases}
\end{align}
which shows that recurrence and transience can be completely characterized in terms of the parameters $\alpha$, $\gamma$, and $\vartheta$.

In fact, under~\eqref{eq:laurent}, all dynamical properties of $X$ can be classified solely by the parameters $(\alpha,\gamma,\vartheta,R)$. More precisely, for each $(\alpha,\gamma,\vartheta,R) \in \R \times \R \times \R_{>0} \times \Z$, one can determine whether $X$ is explosive or non-explosive, whether it is transient, null recurrent, or positive recurrent, and whether it is exponentially ergodic.

In this section, we focus on the cases $R \le 0$. The case $R \ge 1$ was largely analyzed in~\cite{XuHansenWiuf2022}. While some parameter regimes—particularly regarding exponential ergodicity—were not fully classified there, we address those in Section~\ref{sec: every single}.

\begin{thm}\label{thm: main for laurante}
Let $R \le 0$. Then $X$ is non-explosive and
\begin{enumerate}
    \item[(i)] transient if $\alpha>0$, or if $\alpha=0$ and $\gamma>\vartheta$,
    \item[(ii)] null recurrent if $\alpha=0$ and $(R-1)\vartheta\le\gamma\le \vartheta$, 
    \item[(iii)] non-exponentially ergodic if $\alpha=0$ and $\gamma<(R-1)\vartheta$, or if $\alpha <0$ and $R<0$, and 
    \item[(iv)] exponentially ergodic if $\alpha<0$ and $R=0$.
\end{enumerate}
\end{thm}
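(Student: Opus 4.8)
The plan is to specialize the general criteria from Theorem~\ref{thm:main} (equivalently, Theorems~\ref{thm: expl}--\ref{thm:main} collected there) to the Laurent-type case, using the explicit asymptotics~\eqref{eq:m and v under laurent}: $m(x)=\alpha x^R+\gamma x^{R-1}+O(x^{R-2})$ and $v(x)=\vartheta x^R+O(x^{R-1})$ with $\vartheta>0$ and $R\le 0$. The first order of business is non-explosivity: since $R\le 0$, we have $v(x)=O(1)\lesssim x^2(\log x)^2$ and $m(x)x-v(x)=O(x^{R+1})=O(x)\lesssim x^2\log x$, so part (a) of Theorem~\ref{thm: non-expl 2} applies directly and $X$ is non-explosive in every regime. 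This also means recurrence questions are not complicated by explosion.

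Next I would compute the two key quantities $J(x)=m(x)x/v(x)$ and $H_p(x)$ under~\eqref{eq:m and v under laurent}. We get
\begin{align*}
    J(x)=\frac{\alpha x^{R+1}+\gamma x^R+O(x^{R-1})}{\vartheta x^R+O(x^{R-1})}=\frac{\alpha}{\vartheta}x+\frac{\gamma}{\vartheta}+O\!\left(\frac1x\right),
\end{align*}
and from the displayed computation of $\lim H_1(x)$ in~\eqref{eq: classification} (and analogously $H_{c-1}(x)$ with $c=R$, i.e. $H_{R-1}(x)\to\frac{\log x(\alpha x+\gamma-(R-1)\vartheta)}{\vartheta}$). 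With these in hand each case falls out by matching to the appropriate part of Theorem~\ref{thm:main}:
\begin{itemize}
\item[(i)] If $\alpha>0$, then $\liminf H_1(x)=+\infty>1$, so Theorem~\ref{thm: tran} gives transience; if $\alpha=0$ and $\gamma>\vartheta$, then $\gamma-\vartheta>0$ gives again $\lim H_1(x)=+\infty$ by~\eqref{eq: classification}, hence transient.
\item[(ii)] If $\alpha=0$, then $v(x)=O(1)\lesssim x^c$ for any $c>0$; take $c=R\le 0$? — here the subtlety is that Theorem~\ref{thm: null} requires $0<c\le 2$, so one must instead choose a suitable $c\in(0,2]$ with $v(x)\lesssim x^c$ (any such $c$ works since $v$ is bounded) and then check whether the $J$-condition (a) or the $H_{c-1}$-condition (b) of Theorem~\ref{thm: null} holds. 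With $\alpha=0$, $J(x)\to\gamma/\vartheta$, so if $(R-1)\vartheta\le\gamma\le\vartheta$ we want to land in a window $c-1<\gamma/\vartheta$ or in the $H_{c-1}$ window; I expect one needs to also invoke recurrence first (via Theorem~\ref{thm: recu}: $\alpha=0,\gamma\le\vartheta$ gives $\limsup H_1(x)\le 0<1$, so recurrent) and then pick $c$ cleverly so that $\gamma/\vartheta$ sits strictly inside $(c-1,\infty)$ or so that $\liminf H_{c-1}(x)>-1$. This bookkeeping with the choice of $c$ is the main obstacle.
\item[(iii)] If $\alpha=0$ and $\gamma<(R-1)\vartheta$: first recurrent (as above), then positive recurrent by Theorem~\ref{thm: positive}(a) if $\limsup J(x)=\gamma/\vartheta<c-1$ for some $c\le 2$ with $v\gtrsim x^c$ — but $v(x)\sim\vartheta x^R$ with $R\le 0$, so $v(x)\gtrsim x^R$, giving $c=R$ and the condition $\gamma/\vartheta<R-1$, exactly the hypothesis; then non-exponential ergodicity by the non-exponential ergodicity theorem, part (a) (if $R=0$: $v(x)\lesssim x^c$ for $c$ slightly below $0$? no — $R=0$ needs care) or part (b) if $R<0$ (since then $v(x)\lesssim x^R$ with $R<0$). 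If $\alpha<0$ and $R<0$: $v(x)\lesssim x^R\to 0$, so part (b) of the non-exponential ergodicity theorem applies once positive recurrence is established — and $\alpha<0$ gives $\limsup J(x)/x=\alpha/\vartheta<0$, not directly positive recurrence, so one routes through Theorem~\ref{thm: positive}(a) with $c=R$: need $\limsup J(x)<c-1$, but $J(x)\to-\infty$ since $\alpha<0$, so indeed $\limsup J(x)=-\infty<R-1$, giving positive recurrence.
\item[(iv)] If $\alpha<0$ and $R=0$: $v(x)\sim\vartheta$ so $1\lesssim v(x)$, and $J(x)/x\to\alpha/\vartheta<0$, so Theorem~\ref{thm: expo}(c) gives exponential ergodicity, after first checking positive recurrence via Theorem~\ref{thm: positive}(a) with $c=0$ (need $\limsup J(x)<-1$, which holds since $J(x)\to-\infty$).
\end{itemize}

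The hard part will be (ii), the null-recurrent regime, and the edge cases of (iii) when $R=0$ versus $R<0$: one must carefully verify that a valid constant $c\in(0,2]$ can be chosen so that $v(x)\lesssim x^c$ and simultaneously the $J(x)$ or $H_{c-1}(x)$ limit lands strictly inside the required interval, since $v$ is merely bounded (not decaying) when $R=0$ but decays when $R<0$. I would handle $R=0$ and $R<0$ separately where the general theorems' exponent constraints ($0<c\le 2$, or $c<2$, or $c<0$) bite differently, and in each subcase exhibit the explicit $c$ and confirm the strict inequalities, also double-checking the boundary values $\gamma=\vartheta$ and $\gamma=(R-1)\vartheta$ fall on the correct side (recurrent/null-recurrent vs. positive recurrent) — these boundary verifications, together with confirming recurrence before invoking the positive/null recurrence theorems, complete the argument.
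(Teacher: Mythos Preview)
Your overall strategy---specializing the general criteria of Theorem~\ref{thm:main} via the Laurent asymptotics~\eqref{eq:m and v under laurent}---is exactly the paper's, and your treatment of non-explosivity, (i), (iii), and (iv) is correct and matches the paper's proof (it too takes $c=R$ for positive recurrence via 5(a), and routes through 8(a) for the first clause of (iii), 8(b) for the second, and 7(c) for (iv)).

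The genuine gap is in (ii). You consider $c=R$, dismiss it because Theorem~\ref{thm: null} as stated requires $0<c\le 2$, and then commit to finding some $c\in(0,2]$. That plan cannot cover the full range. With $\alpha=0$ one has $J(x)\to\gamma/\vartheta$, so condition 6(a) requires $c<1+\gamma/\vartheta$; but the hypothesis $(R-1)\vartheta\le\gamma$ permits $\gamma/\vartheta$ as small as $R-1$, so for $R\le 0$ the quantity $1+\gamma/\vartheta$ can be $\le 0$ and no positive $c$ works. Condition 6(b) is no better: $H_{c-1}(x)\to \operatorname{sgn}(\gamma-(c-1)\vartheta)\cdot\infty$ unless $\gamma=(c-1)\vartheta$, which again forces $c=1+\gamma/\vartheta$. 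The paper instead simply sets $c=R$ throughout: then $v(x)\sim\vartheta x^R$ gives $v(x)\lesssim x^R$, condition 6(a) becomes exactly $\gamma/\vartheta>R-1$ (covering the interior), and at the boundary $\gamma=(R-1)\vartheta$ one gets $\lim_{x\to\infty} H_{R-1}(x)=0\in(-1,\infty)$ so 6(b) applies. The point you are missing is that the lower bound $c>0$ in the statement of Theorem~\ref{thm: null} is not actually used in its proof: the exponents $p_i=2-c$ and $2-c-\epsilon/2$ only grow as $c$ decreases, and every estimate in Case~2 goes through verbatim for $c\le 0$. Your first instinct to take $c=R$ was the right one.
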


\begin{proof}
Note that
\[
m(x) = x^R(\alpha + \gamma x^{-1} + O(x^{-2})), 
\qquad 
v(x) = x^R(\vartheta + O(x^{-1})),
\]
and hence
\[
m(x)x - v(x) = x^R(\alpha x + \gamma - \vartheta + O(x^{-1})).
\]
Since $R \le 0$, non-explosivity of $X$ follows from part~$2(a)$ of Theorem~\ref{thm:main}.  
Throughout this proof, we set $c = R$.

From~\eqref{eq: classification}, case~$(i)$ and the recurrence in $(ii)$--$(iv)$ follow directly from Theorem~\ref{thm:main}.  

Now assume $(ii)$.  
If $(R-1)\vartheta < \gamma$, then $\dlim_{x \to \infty} J(x) = \gamma/\vartheta > c-1$.  
If $(R-1)\vartheta = \gamma$, then
\[
\dlim_{x \to \infty} H_{c-1}(x) 
= \dlim_{x \to \infty} \frac{\log x (\gamma-(R-1)\vartheta)}{\vartheta} 
= 0.
\]
In both cases, null recurrence follows from parts~$6(a)$ and~$6(b)$ of Theorem~\ref{thm:main}, respectively.

Next, observe that $\dlim_{x \to \infty} J(x) = (\alpha x + \gamma)/\vartheta < c-1$ under the assumptions of either $(iii)$ or $(iv)$, implying positive recurrence by part~$5(a)$ of Theorem~\ref{thm:main}.  
In case $(iii)$, the first condition yields $\dlim_{x \to \infty} J(x) = \gamma/\vartheta$, which ensures non-exponential ergodicity by part~$8(a)$ of Theorem~\ref{thm:main}.  
The second condition similarly implies non-exponential ergodicity by part~$8(b)$ of Theorem~\ref{thm:main}.

Finally, for case $(iv)$, since $R=0$ implies $1 \lesssim v(x)$ and $\alpha<0$ gives $\lim_{x \to \infty} J(x)/x$ $= \alpha < 0$, exponential ergodicity follows from part~$7(c)$ of Theorem~\ref{thm:main}.
\end{proof}

\section{Applications}\label{sec : applications}

The main application of Theorem~\ref{thm: main for laurante} lies in \emph{stochastic reaction networks}. Reaction networks provide a graphical representation of biochemical interaction systems, such as
\begin{align*}
    A+B \xrightarrow{\kappa_1} 2A \xrightarrow{\kappa_2} \emptyset, 
    \qquad 
    B\xrightarrow{\kappa_3} A.
\end{align*}

In general, a reaction network consists of reactions of the form
\begin{align*}
    \nu_{i1}S_1 + \nu_{i2}S_2 + \dots + \nu_{id}S_d 
    \xrightarrow{\kappa_i} 
    \nu'_{i1}S_1 + \nu'_{i2}S_2 + \dots + \nu'_{id}S_d, 
    \qquad i = 1,\dots,k.
\end{align*}
We denote such a network by $(\Sp,\C,\Re)$, where  
$\Sp=\{S_1,\dots,S_d\}$ is the set of \emph{species},  
$\C=\{\sum_j \nu_{ij}S_j, \sum_j \nu'_{ij}S_j : i=1,\dots,k\}$ is the set of \emph{complexes}, and  
$\Re=\{\sum_j \nu_{ij}S_j \to \sum_j \nu'_{ij}S_j : i=1,\dots,k\}$ is the set of \emph{reactions}.  
The associated collection of \emph{rate constants} is $\K=\{\kappa_i : i=1,\dots,k\}$.  
Then $(\Sp,\C,\Re,\K)$ specifies a \emph{reaction system}, which determines the corresponding stochastic dynamics.

The generator of the associated continuous-time Markov chain $\bar X$ on $\mathbb Z_{\ge 0}^d$ is given by
\begin{align*}
    \mathcal A f(x)
    = \sum_{i=1}^R \lambda_i(x)\big(f(x+\eta_i) - f(x)\big),
    \qquad 
    \text{for all } f \in \text{Dom}_+(\mathcal A) \text{ and } x \in \mathbb Z_{\ge 0}^d,
\end{align*}
where $\eta_i  \in \mathbb Z^d$ is the \emph{reaction vector} with components $\eta_{ij} = \nu'_{ij} - \nu_{ij}$, representing the net change in the $i$th reaction.  
As indicated by the generator, the transition of $\bar X$ is given by $\eta_i$, and its corresponding rate is $\lambda_i$.

Under the \emph{mass-action kinetics}, the transition rate $\lambda_i(x)$ is defined by
\begin{align}\label{eq:mass}
    \lambda_i(x) 
    = \kappa_i \prod_{j=1}^d \frac{x_j!}{(x_j - \nu_{ij})!} 
    \mathbbm 1_{\{x_j \ge \nu_{ij}\}}.
\end{align}
For example, for the reaction $2S_1 + S_2 \xrightarrow{\kappa} S_3$, the corresponding transition is $\eta=(-2,-1,1)$, and the mass-action transition rate is $\lambda(x)=\kappa x_1(x_1-1)x_2$.  
However, as previously noted, when a multi-species reaction network under mass-action is reduced to an effective one-dimensional model, the resulting transition rates can often be approximated by more general forms such as rational functions (see Section~\ref{sec : main-ctmc}).

\subsection{Every ergodic single-species reaction network under mass-action is exponentially ergodic}\label{sec: every single}

In~\cite{kim2025path}, a special class of continuous-time Markov chains associated with reaction networks was introduced, which admits non-exponential ergodicity.  
In particular, when the chain reaches a certain part of the boundary—correspon- ding to the extinction of some species—it may become trapped in a long-lasting loop, leading to slow mixing.  
For example, consider the continuous-time Markov chain $X$ associated with the following mass-action reaction network:
\begin{align*}
A+B \rightleftharpoons \emptyset, 
\qquad 
B \rightleftharpoons 2B.
\end{align*}
When the associated Markov chain $\bar X$ reaches a state of the form $(n,0)$ for large $n$, it is likely to oscillate between $(n,0)$ and $(n+1,1)$ for an extended period with high probability (see Example~2.1 in~\cite{kim2025path}).  
Since such boundary behavior arises from the extinction of one species, one can expect that this slow-mixing phenomenon does not occur in single-species reaction networks.

For a single-species reaction network under mass-action, one might still wonder whether non-exponential ergodicity could appear without such boundary effects, namely under either condition $8(a)$ or $8(b)$ of Theorem~\ref{thm:main}.  
We will show that under mass-action kinetics, these conditions can never be satisfied.

Exponential ergodicity of ergodic single-species reaction networks  under mass-action was proved in~\cite{XuHansenWiuf2022} except for the case of $R=2$ and $\gamma<\vartheta$. For this remaining case, we prove exponential ergodicity.
\begin{prop}\label{prop:R2}
Let $X$ be a one-dimensional continuous-time Markov chain on $\mathbb Z_{\ge 0}$.  
If $R=2$, $\alpha=0$, and $\gamma<\vartheta$, then $X$ is exponentially ergodic.
\end{prop}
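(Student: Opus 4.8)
The plan is to read off the asymptotics of the drift mean and variance from the Laurent-type expansion and then invoke Theorem~\ref{thm:main} twice, without constructing a new Lyapunov function. Since $R=2$ and $\alpha=0$, the expansion~\eqref{eq:m and v under laurent} specializes to $m(x)=\gamma x+O(1)$ and $v(x)=\vartheta x^{2}+O(x)$ with $\vartheta>0$. Hence $v(x)\sim x^{2}$, and
\[
J(x)=\frac{m(x)x}{v(x)}=\frac{\gamma x^{2}+O(x)}{\vartheta x^{2}+O(x)}\to\frac{\gamma}{\vartheta}\quad\text{as }x\to\infty.
\]
Because $\gamma<\vartheta$ and $\vartheta>0$, this limit is strictly less than $1$, so $\limsup_{x\to\infty}J(x)<1$.

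The first step is to establish positive recurrence, since parts~$7$ and~$8$ of Theorem~\ref{thm:main} take it as a standing hypothesis. I would apply part~$5(a)$ with $c=2$: the conditions $v(x)\gtrsim x^{2}$ (immediate from $v(x)=\vartheta x^{2}+O(x)$ with $\vartheta>0$) and $\limsup_{x\to\infty}J(x)<c-1=1$ both hold, so $X$ is positive recurrent.

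The second step is to invoke part~$7(b)$ of Theorem~\ref{thm:main}: its two hypotheses are exactly $v(x)\sim x^{2}$ and $\limsup_{x\to\infty}J(x)<1$, both already verified, so $X$ is exponentially ergodic.

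I do not expect a genuine obstacle: the content of the statement is that the critical regime $R=2$, $\gamma<\vartheta$ left unresolved in~\cite{XuHansenWiuf2022} lies inside the scope of part~$7(b)$ of Theorem~\ref{thm:main}, and the only substantive check is the strict inequality $\gamma/\vartheta<1$, which is immediate from $\gamma<\vartheta$ together with $\vartheta>0$. If a self-contained argument were preferred over citing Theorem~\ref{thm:main}, one could instead take the Lyapunov function $f(x)=x^{\varepsilon/2}$ for sufficiently small $\varepsilon>0$, use the generator expansion~\eqref{eq: x f 2} with $p_i=\varepsilon/2$ and $v(x)\sim x^{2}$ to obtain $\A f(x)\lesssim -f(x)$ for all large $x$, and then apply Proposition~\ref{prop: expo}; this merely reproduces the computation underlying part~$7(b)$.
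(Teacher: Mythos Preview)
Your proposal is correct and follows essentially the same approach as the paper: both compute $v(x)\sim x^{2}$ and $\lim_{x\to\infty}J(x)=\gamma/\vartheta<1$ from the Laurent expansion and then invoke part~$7(b)$ of Theorem~\ref{thm:main}. You are in fact slightly more careful than the paper, which applies part~$7(b)$ without explicitly checking its standing hypothesis of positive recurrence; your intermediate appeal to part~$5(a)$ with $c=2$ fills that gap cleanly.
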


\begin{proof}
We will show that when $R=2$, the conditions of part~$7(b)$ in Theorem~\ref{thm:main} hold under mass-action kinetics.  
Since $R=2$, we have $v(x) \sim x^2$.  
Furthermore, by~\eqref{eq:m and v under laurent}, $\alpha=0$ and $\gamma<\vartheta$ imply
\[
\lim_{x\to \infty} J(x)
= \lim_{x\to \infty} \frac{\gamma x^2}{\vartheta x^2}
= \frac{\gamma}{\vartheta}
< 1,
\]
which establishes the claim.
\end{proof}

Based on Proposition~\ref{prop:R2} and the preceding discussion, we obtain the following complete classification for single-species reaction systems under mass-action.

\begin{cor}
Let $X$ be the continuous-time Markov chain associated with a reaction system $(\Sp,\C,\Re,\K)$ under mass-action kinetics~\eqref{eq:mass}.  
If $X$ is ergodic, then it is exponentially ergodic.
\end{cor}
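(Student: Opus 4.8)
The plan is to reduce the statement to the complete classification for Laurent-type transition rates and then exploit the mass-action structure to push every ergodic regime into the exponentially ergodic part of that classification. The first step is to note that, under mass-action kinetics~\eqref{eq:mass}, a single-species network has $\lambda_\eta(x)=\sum_i\kappa_i\,x(x-1)\cdots(x-n_i+1)$ for each net change $\eta$, where $i$ ranges over the reactions with net change $\eta_i=\eta$ and $n_i$ is the number of molecules of the species in the $i$th source complex; hence every $\lambda_\eta$ is a polynomial, so $\{\lambda_\eta\}$ is of Laurent type with maximal degree $R\ge 0$, and $m(x),v(x)$ have the form~\eqref{eq:m and v under laurent} with $\vartheta>0$ for any nontrivial network. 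Thus $X$ is governed by Theorem~\ref{thm: main for laurante} when $R\le 0$ and by the polynomial classification of~\cite{XuHansenWiuf2022} together with Proposition~\ref{prop:R2} when $R\ge 1$; since an ergodic $X$ is in particular positive recurrent, it only remains to verify that the non-exponential-ergodicity alternatives, namely conditions $8(a)$ and $8(b)$ of Theorem~\ref{thm:main}, can never be met by such an $X$.

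Condition $8(b)$ requires $v(x)\lesssim x^c$ for some $c<0$, which is impossible because $v$ is a polynomial of degree $R\ge 0$. Condition $8(a)$ requires $v(x)\lesssim x^c$ for some $c<2$, hence $R\le 1$, together with $\liminf_{x\to\infty}J(x)$ and $\limsup_{x\to\infty}J(x)$ finite. If $R=0$ then every nontrivial reaction has source complex $\emptyset$, so all reaction vectors are positive and $X$ is a pure-birth chain, which is transient and not ergodic. If $R=1$ then $m(x)=\alpha x+\gamma+O(1/x)$ and $v(x)\sim\vartheta x$, so $J(x)=(\alpha x+\gamma)/\vartheta+o(1)$ is bounded precisely when $\alpha=0$; but for single-species mass-action the only degree-$\le 1$ reaction with a negative jump is $S\to\emptyset$, whose rate $\kappa x$ contributes nothing to the constant term, while all constant terms of the $\lambda_\eta$ come from $\emptyset$-sourced reactions and carry positive jumps, whence $\gamma\ge 0$. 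Evaluating the limits of $H_1$ and $J$ from~\eqref{eq:m and v under laurent} then shows, via parts 3, 4, and 6 of Theorem~\ref{thm:main}, that $R=1$ with $\alpha=0$ (and $\gamma\ge 0$) is transient or null recurrent, hence not ergodic. Therefore $8(a)$ is also vacuous for an ergodic $X$.

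With $8(a)$ and $8(b)$ excluded, an ergodic $X$ must fall into one of the exponentially ergodic regimes. For $R=1$ we are left with $\alpha<0$, where $J(x)/x\to\alpha/\vartheta<0$ and $1\lesssim v(x)$, so part~$7(c)$ of Theorem~\ref{thm:main} applies. For $R\ge 2$ the classification of~\cite{XuHansenWiuf2022} yields exponential ergodicity in every ergodic regime except $R=2$ with $\alpha=0$ and $\gamma<\vartheta$, which is exactly the case resolved by Proposition~\ref{prop:R2} through part~$7(b)$. The step I expect to demand the most care is precisely this bookkeeping: one must confirm that each threshold parameter value---$\gamma=\vartheta$ at $R=2$, $\gamma=0$ at $R=1$, and the analogous borderline values at higher $R$---lies on the non-ergodic (transient or null-recurrent) side of the classification, so that no ergodic chain is left unaccounted for. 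This reduces to reading off the sharp limits of $H_1$ and $J$ supplied by~\eqref{eq:m and v under laurent} and matching them against the appropriate case of Theorem~\ref{thm:main}; the genuinely mass-action-specific ingredient, and the one that makes the exclusion of $8(a)$ go through, is the sign constraint $\gamma\ge 0$ in the degree-one regime.
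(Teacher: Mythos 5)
Your proposal is correct and follows essentially the same route as the paper: reduce to the polynomial (Laurent-type with $R\ge 0$) classification, invoke~\cite{XuHansenWiuf2022} for the regimes it already covers, and close the remaining gap ($R=2$, $\alpha=0$, $\gamma<\vartheta$) via Proposition~\ref{prop:R2} through part~$7(b)$ of Theorem~\ref{thm:main}. The only difference is that you spell out the low-degree bookkeeping (the sign constraint $\gamma\ge 0$ at $R=1$, the pure-birth observation at $R=0$, and the explicit exclusion of conditions $8(a)$ and $8(b)$) that the paper leaves as an informal "preceding discussion," which is a welcome elaboration rather than a departure.
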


\subsection{One-dimensional Michaelis-Menten kinetics}

In real biochemical systems, reaction rates do not necessarily follow mass-action kinetics. A well-known and biologically relevant example is the Michaelis-Menten kinetics, which describes the rate of enzymatic reactions where a substrate is converted into a product with the help of an enzyme~\cite{segel1975enzyme}. In the one-dimensional case, the Michaelis-Menten mechanism can be written as
\begin{align}\label{eq:MM}
E + nA \rightleftharpoons C \rightarrow E + P,
\end{align}
where $E$ is an enzyme, $n,m$ are positive integers, and $C$ is the enzyme–substrate complex.
If $E + nA \rightleftharpoons C$ represents the formation of the complex $C$ that actually occurs through a sequence of reversible binding reactions, then the rate of the reaction follows a rational function. 
Specifically, when we denote by $EA_i$ the intermediate complex formed by one enzyme $E$ and $i$ molecules of $A$, the elementary reversible steps are $EA_i + A \rightleftharpoons EA_{i+1}$, where $EA_n = C$~\cite{segel1975enzyme, marangoni2003enzyme}.
Assuming that the dissociation constants of these intermediate reactions are equal to $K$, the quasi steady-state approximation reduces \eqref{eq:MM} to $nA\to P$  whose the reaction intensity takes the form
\begin{align}\label{eq:mm-rate}
\lambda_{c}(x; n, V, K)
= \frac{V x^{n}}{1 + (x/K) + \cdots + (x/K)^n}
= V K^n - V K^{n+1} x^{-1} + O(x^{-2}),
\end{align}
 where $x$ is the copy number of $A$, and $V$ is the maximal rate ~\cite{segel1975enzyme, johnson2011original, briggs1925note}.

As a simple application of such special types of  Michaelis-Menten kinetics, we can consider a birth–and-death process where each reaction rate follows~\eqref{eq:mm-rate}:
\begin{align}\label{eq : birth-death-mm}
\begin{split}
n_1 A \xrightarrow{\lambda_{-c_1}(x; n_1, V_1, K_1)} (n_1 - c_1)A, \quad
n_2 A \xrightarrow{\lambda_{c_2}(x; n_2, V_2, K_2)} (n_2 + c_2)A.
\end{split}
\end{align}
Then, by~\eqref{eq : def-ctmc}, we have
$R = 0,~
\alpha = -c_1V_1K_1^{n_1} + c_2V_2 K_2^{n_2}, ~
\gamma = c_1 V_1 K_1^{n_1 + 1} - c_2 V_2 K_2^{n_2 + 1}, ~
\theta = \tfrac{1}{2}(c_1^2 V_1 K_1^{n_1}+ c_2^2 V_2 K_2^{n_2})$.
Thus, by Theorem~\ref{thm: main for laurante}, the stochastic reaction network~\eqref{eq : birth-death-mm} is
\begin{enumerate}[(a)]
\item exponentially ergodic if $c_1V_1K_1^{n_1} > c_2V_2 K_2^{n_2}$,
\item transient if $c_1V_1K_1^{n_1} <c_2V_2 K_2^{n_2}$, or if $c_1V_1K_1^{n_1} = c_2V_2 K_2^{n_2}$ and $K_1 - K_2 > \frac{c_1 + c_2}{2}$,
\item non-exponentially ergodic if $c_1V_1K_1^{n_1} = c_2V_2 K_2^{n_2}$ and $K_1 - K_2 < -\tfrac{c_1 + c_2}{2}$,
\item null recurrent if $c_1V_1K_1^{n_1} = c_2V_2 K_2^{n_2}$ and $-\frac{c_1 + c_2}{2} \leq K_1 - K_2 \leq \tfrac{c_1 + c_2}{2}$.
\end{enumerate}

Note that if $c_1 = c_2 = 1$, then~\eqref{eq : birth-death-mm} reduces to a simple birth–death process.
In this case, the same classification can also be derived using the classical results of~\cite{mao2004ergodic, karlin1957classification, norris1997jr}.
However, Theorem~\ref{thm: main for laurante} also provides the classification for birth–death processes with multiple jump sizes.

As examples, when $(V_1, K_1, V_2, K_2) = (1,1,1,1)$ and $(n_1, c_1, n_2, c_2) = (3,2,2,1)$ in~\eqref{eq : birth-death-mm}, the associated Markov chain is exponentially ergodic.
In contrast, when $(V_1, K_1, V_2, K_2) = (3,1,2,3)$ and $(n_1, c_1, n_2, c_2) = (4,2,1,1)$, the associated Markov chain is non-exponentially ergodic. We display $\log{( \Vert P(X(t)\in \cdot)-\pi(\cdot)\Vert_{tv} )}$ for these models in Figure \ref{fig:exp-erg-ex}, where $\pi$ is the stationary distribution.  

\begin{figure}[h!]

\centering
    \includegraphics[width=0.85\linewidth]{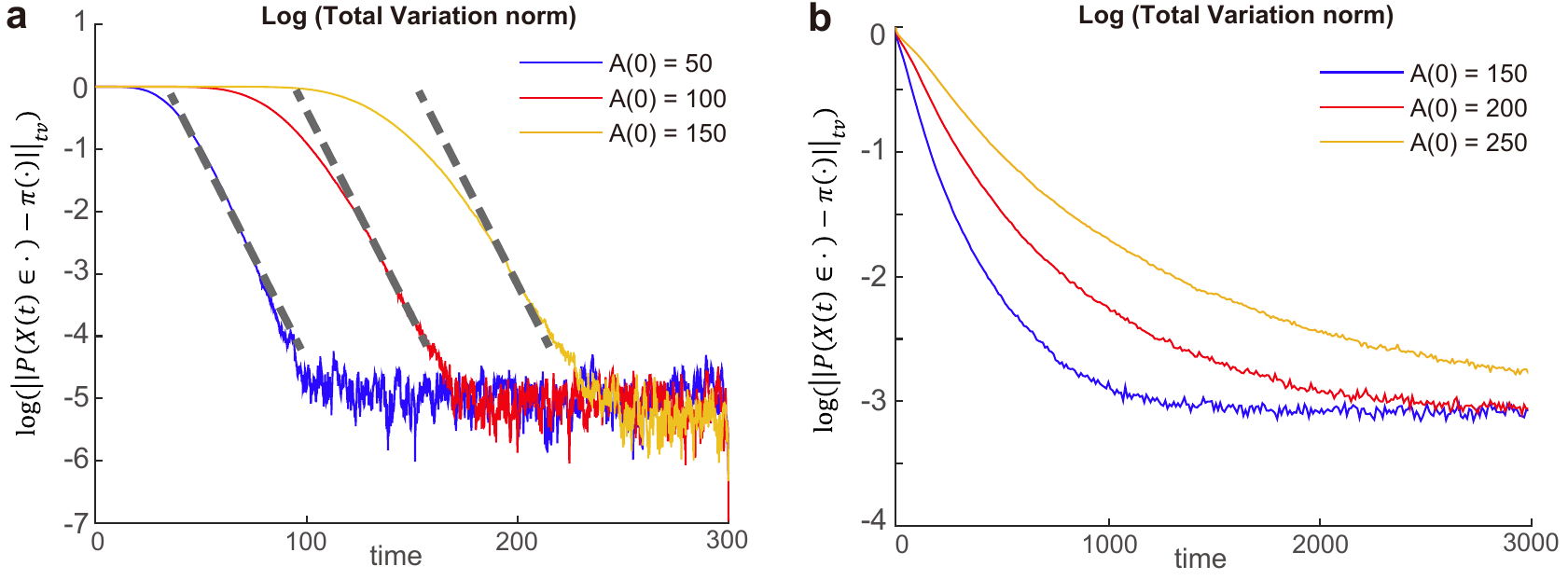}
    \caption{\textbf{a}, \textbf{b}: $\log(||P(X(t)\in \cdot) - \pi(\cdot)||_{tv})$ for the cases $(V_1,K_1,V_2,K_2)=(1,1,1,1)$ and $(n_1,c_1,n_2,c_2)=(3,2,2,1)$ (a), and $(V_1,K_1,V_2,K_2)=(3,1,2,3)$ and $(n_1,c_1,n_2,c_2)=(4,2,1,1)$ (b) of~\eqref{eq : birth-death-mm} with different initial values $A(0)$. While $\log(||P(X(t)\in \cdot) - \pi(\cdot)||_{tv})$ decays with almost identical slopes in the left plot across initial values, the decay rates differ in the right plot, indicating exponential and non-exponential ergodicity, respectively.
Each curve was obtained from 50,000 simulated trajectories using the Gillespie algorithm~\cite{Gillespie77}. The plateau in each plot emerges due to Monte-Carlo errors.}
\label{fig:exp-erg-ex}
\end{figure}

\subsection{Haldane equation}

Another example of non–mass-action kinetics is the Haldane equation, which describes substrate inhibition kinetics~\cite{sonnad2004solution, haldane1930enzymes}.
The Haldane mechanism introduces an additional inactive complex $EA_{n+1}=C+A$ to the Michaelis–Menten framework \eqref{eq:MM}.
 In this case, the quasi steady-state approximation, the trantion rate of the reduced model $nA \to P$ takes the form
\begin{align}\label{eq : haldane}
\lambda_c(x; n, V, K)
= \frac{V x^n}{1 + (x/K) + \cdots + (x/K)^{n+1}}
= V K^{n+1} x^{-1} - V K^{n+2} x^{-2} + O(x^{-3}),
\end{align}
where $x$ is the copy number of $A$.

As an analogy with \eqref{eq : birth-death-mm}, we consider a birth–and-death process where each reaction rate follows~\eqref{eq : haldane}:
\begin{align}\label{eq : birth-death-haldane}
n_1 A \xrightarrow{\lambda_{-c_1}(x; n_1, V_1, K_1)} (n_1 - c_1)A, \quad
n_2 A \xrightarrow{\lambda_{c_2}(x; n_2, V_2, K_2)} (n_2 + c_2)A.
\end{align}

Then, by~\eqref{eq : def-ctmc}, we obtain
$
R = -1, ~\alpha = -c_1 V_1 K_1^{n_1 + 1} + c_2 V_2 K_2^{n_2 + 1}, ~\gamma = c_1 V_1 K_1^{n_1 + 2} - c_2 V_2 K_2^{n_2 + 2}, ~
\theta = \tfrac{1}{2}(c_1^2 V_1 K_1^{n_1 + 1} + c_2^2 V_2 K_2^{n_2 + 1})$.
Hence, by Theorem~\ref{thm: main for laurante}, the stochastic reaction network~\eqref{eq : birth-death-haldane} satisfies
\begin{enumerate}[(a)]
\item transient if $c_1 V_1 K_1^{n_1 + 1} < c_2 V_2 K_2^{n_2 + 1}$, or if $c_1 V_1 K_1^{n_1 + 1} = c_2 V_2 K_2^{n_2 + 1}$ and $K_1 - K_2 > \tfrac{c_1 + c_2}{2}$
\item null recurrent if $c_1 V_1 K_1^{n_1 + 1} = c_2 V_2 K_2^{n_2 + 1}$ and $-(c_1 + c_2) \leq K_1 - K_2 \leq \tfrac{c_1 + c_2}{2}$,
\item non-exponentially ergodic if $c_1 V_1 K_1^{n_1 + 1} > c_2 V_2 K_2^{n_2 + 1}$, or if $c_1 V_1 K_1^{n_1 + 1} =$

$c_2 V_2 K_2^{n_2 + 1}$ and $K_1 - K_2 < -(c_1 + c_2)$.
\end{enumerate}

As examples, when $(V_1, K_1, V_2, K_2) = (16,1,1,4)$ and $(n_1, c_1, n_2, c_2) = (1,1,1,1)$ in \eqref{eq : birth-death-haldane}, the associated Markov chain is non-exponentially ergodic.
In contrast, when $(V_1, K_1, V_2, K_2) = (1,2,4,1)$ and $(n_1, c_1, n_2, c_2) = (1,1,1,1)$ in \eqref{eq : birth-death-haldane}, the associated Markov chain is null recurrent.

\subsection{General rational kinetics}\label{sec:reduction}

As described in the Introduction, a reaction network with multiple species under mass-action can be approximated by a single-species reaction network under rational kinetics.  

For example, consider a mass-action reaction system $(\Sp,\C,\Re,\K)$ given by
\begin{align}
&\emptyset \xrightarrow{1} 2A, 
\quad 
B+2A\xrightarrow{2/V} B+A, 
\quad 
B+A\xrightarrow{6/V} B, \label{ex:reduction row 1}\\[2mm]
&\emptyset \xrightarrow{VU}B, 
\quad 
B+2A \xrightarrow{U} 2A, 
\quad 
B\xrightarrow{U} \emptyset, 
\label{ex:reduction row 2}
\end{align}
where the scaling parameters $U$ and $V$ represent the acceleration rate and the system volume, respectively.  
Let $X_A(0)=1$ and $X_B(0)=V$. Under this initial condition, the reaction intensities of the three reactions in \eqref{ex:reduction row 1} and those in \eqref{ex:reduction row 2} are of order $1$ and $V$, respectively. Thus the system admits a natural time-scale separation.  

Remarkably, the scaled process $X_B(t)/V$ converges in probability to $x_B(t)$, where $x_B(t)$ solves
\begin{align*}
    x_B'(t)=U(1-(X_A(X_A-1)+1)x_B), \quad x_B(0)=1.
\end{align*}
This convergence follows from the relationship between deterministic and stochastic models of reaction systems \cite{kurtz1970solutions, AndKurtz2011}, which is based on the law of large numbers applied to the random time change representation:
\begin{align*}
    \frac{X_B(t)}{V}
    &=\frac{X_B(0)}{V}
    +\frac{N_1}{V}(VUt)
    -\frac{N_2}{V} \!\left(VU \int_0^t X_A(X_A-1)\frac{X_B(s)}{V}ds \right)
    -\frac{N_3}{V} \!\left(U \int_0^t \frac{X_B(s)}{V}ds \right),
\end{align*}
where $N_i(t)$'s are independent unit-rate Poisson processes.  

When $U$ is sufficiently large, $x_B(t)$ rapidly converges to the steady state $\frac{1}{X_A(X_A-1)+1}$ (Figure~\ref{fig:approx}c).  
During this fast convergence, the copy number $X_A(t)$ remains nearly constant, as the reactions in \eqref{ex:reduction row 1}, which only affect $X_A$, occur on a much slower time scale.  
Once $x_B$ has reached its quasi-steady value $\frac{1}{x_A(x_A-1)+1}$ for $X_A(t)=x_A$, a subsequent jump of $X_A$ to $x'_A$ induces a quick relaxation of $x_B$ to another quasi-steady value $\frac{1}{x'_A(x'_A-1)+1}$ (Figure~\ref{fig:approx}c).  
Although this behavior can be rigorously analyzed via singular perturbation techniques, we describe it here only heuristically.

Because $X_B/V$ converges almost instantaneously, the stochastic process associated with \eqref{ex:reduction row 1}–\eqref{ex:reduction row 2} can be reduced to a one-dimensional model
\begin{align}\label{ex:reduced}
A\to \emptyset \to 2A,    
\end{align}
whose transition rates are given by
\begin{align*}
\lambda_1(x)=\frac{2x(x-1)+6x}{x(x-1)+1}=2+6x^{-1}+O(x^2),
\qquad 
\lambda_2(x)=1,
\end{align*}
(see Figure~\ref{fig:approx}a,b).  
Hence, in the reduced model \eqref{ex:reduced}, we have $R=0$, $\alpha=0$, $\gamma=-6$, and $\vartheta=4$.  
By Theorem~\ref{thm: main for laurante}, $X$ for \eqref{ex:reduced} is therefore non-exponentially ergodic.  

\begin{figure}[!h]
    \centering
    \includegraphics[width=0.9\linewidth]{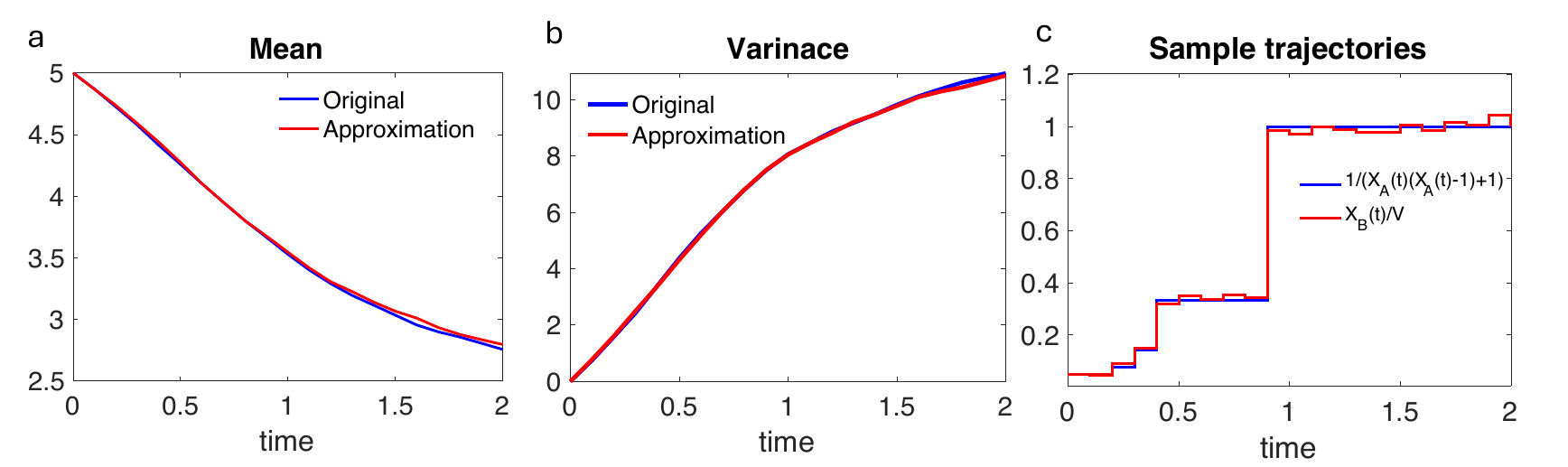}
    \caption{\textbf{a}, \textbf{b}: The mean (a) and variance (b) of $X$ in the original model \eqref{ex:reduction row 1}–\eqref{ex:reduction row 2} and in the approximation \eqref{ex:reduced}, with $U=10^2$ and $V=10^3$ based on $10^4$ trajectories. 
    \textbf{c}: Sample trajectories of $1/(X_A(t)(X_A(t)-1)+1)$ and $X_B(t)/V$ in \eqref{ex:reduction row 1}–\eqref{ex:reduction row 2}, showing that $X_B(t)/V$ rapidly converges to $1/(X_A(X_A-1)+1)$ after each jump of $X_A$.}
    \label{fig:approx}
\end{figure}

\medskip

In general, for a two-species reaction network, suppose that the reactions are separated into \emph{fast} and \emph{slow} reactions such that  
\begin{enumerate}[(a)]
    \item $Y$ evolves only through fast reactions catalyzed by $X$, and  
    \item $X$ evolves only through slow reactions catalyzed by $Y$.
\end{enumerate}
Here we say that a reaction is \emph{catalyzed by $X$} if the reactant involves $X$ but the reaction does not change the copy number of $X$.  
For example, $X+2Y\to X+Y$ is catalyzed by $X$. 
Then $X$ can be approximately modeled by a reduced system whose transition rates are arbitrary Laurent-type functions, including rational functions.

To formalize this reduction, it is convenient to use the notation 
\[
x^{(n)}:=\frac{x!}{(x-n)!}\mathbbm 1_{x\ge n},
\]
for $x\in \mathbb Z_{\ge 0}$, to express the mass-action form compactly.  
Since general rational transition rates arise from such reductions, we represent rational functions in terms of $x^{(n)}$ rather than $x^n$.

\begin{prop}\label{prop:general rational function}
Let $L,K\in\mathbb Z_{\ge 0}$, and define $f_1(x)=\sum_{i=0}^K s_i x^{(i)}$ and $f_2(x)=\sum_{j=0}^L r_j x^{(j)}$, where $f_1$ is nonnegative and $f_2$ is strictly positive for all $x\in\mathbb Z_{\ge 0}$.  
Let $g(x)=f_1(x)/f_2(x)$.  
Then there exists a reaction system $(\Sp,\C,\Re,\K)$ satisfying:
\begin{enumerate}
    \item $\Sp=\{X,Y\}$;
    \item every reaction in $\Re$ is catalyzed by $X$;
    \item each rate constant $\kappa_i\in\K$ depends on parameters $V$ and $U$;
    \item for the associated Markov chain $(X,Y)$ with $Y(0)=O(V)$ and arbitrary $X(0)=x$, if $U$ is sufficiently large, then
    \begin{align*}
        \dlim_{U\to \infty}\lim_{V\to \infty}
        P\!\left(\sup_{\delta\le s\le t}
        \left|\frac{Y(s)}{V}-g(x)\right|>\epsilon\right)=0,
    \end{align*}
    for any $t>0$, $\delta\in(0,t)$, and $\epsilon>0$, under mass-action kinetics.
\end{enumerate}
\end{prop}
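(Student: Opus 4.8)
The plan is to exploit the fact that condition (2) freezes $X$: every reaction has $X$ in its reactant complex and leaves the copy number of $X$ unchanged, so $X(t)\equiv X(0)=x$, and the task reduces to building, for each fixed nonnegative integer $x$, a one-species Markov chain for $Y$ that runs on the fast time scale $U$, is scaled by the ``volume'' $V$, and whose classical fluid limit is a scalar ODE having $g(x)$ as its attracting equilibrium. The target fluid ODE will be
\[
\bar y'(s)=U\,\bar y(s)\bigl(\tilde f_1(x)-\tilde f_2(x)\,\bar y(s)\bigr),
\qquad \tilde f_1:=x\,f_1,\quad \tilde f_2:=x\,f_2 .
\]
Multiplying numerator and denominator by $x=x^{(1)}$ does not change the ratio, since $\tilde f_1(x)/\tilde f_2(x)=f_1(x)/f_2(x)=g(x)$ for $x\ge 1$, but it forces $\tilde f_1$ and $\tilde f_2$ to vanish at $x=0$, so that every monomial appearing in them is a multiple of some $x^{(a)}$ with $a\ge 1$ and can therefore be carried by a reaction catalyzed by $X$. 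Writing $g$ as an equilibrium of a dynamics in which \emph{every} term carries a factor $\bar y$ --- rather than as $g=(\text{production})/(\text{linear degradation})$ --- is the device that lets the coefficients of $\tilde f_1$ and $\tilde f_2$ have arbitrary sign, because each such term is then produced by a reaction that consumes at least one molecule of $Y$.

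Concretely, expand $\tilde f_1(x)=\sum_{a\ge1}\tilde s_a x^{(a)}$ and $\tilde f_2(x)=\sum_{a\ge1}\tilde r_a x^{(a)}$ in the $x^{(\cdot)}$ basis and place into $\Re$: for each $a$ with $\tilde s_a>0$ the reaction $aX+Y\to aX+2Y$ with rate constant $U\tilde s_a$; for each $a$ with $\tilde s_a<0$ the reaction $aX+Y\to aX$ with rate constant $U|\tilde s_a|$; for each $a$ with $\tilde r_a>0$ the reaction $aX+2Y\to aX+Y$ with rate constant $U\tilde r_a/V$; and for each $a$ with $\tilde r_a<0$ the reaction $aX+2Y\to aX+3Y$ with rate constant $U|\tilde r_a|/V$. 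Each reaction is catalyzed by $X$ and each rate constant is a function of $(U,V)$, so $(1)$--$(3)$ hold. Using the mass-action form \eqref{eq:mass} with $y^{(1)}=y$ and $y^{(2)}=y(y-1)$, a direct computation shows that the transition rates of $Y$, written in density-dependent form in terms of $\bar y=Y/V$, converge as $V\to\infty$ to the field $F(\bar y)=U\bar y(\tilde f_1(x)-\tilde f_2(x)\bar y)$; the two quadratic reactions contribute $\mp U\tilde r_a x^{(a)}\bar y^2$ because their $1/V$ rate constants exactly compensate the factor $V$ produced by $y^{(2)}\sim V^2\bar y^2$. Since $\tilde f_2(x)=x f_2(x)>0$ for $x\ge 1$, the field $F$ points strongly inward for large $\bar y$, so the chain is non-explosive and its fluid solutions stay bounded on every finite interval.

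It then remains to pass to the limit. Fix $x\ge 1$, set $\beta:=\lim_{V\to\infty}Y(0)/V$, and assume $\beta>0$. For each fixed $U$, Kurtz's functional law of large numbers for density-dependent Markov chains \cite{kurtz1970solutions, AndKurtz2011} gives $\sup_{0\le s\le t}|Y(s)/V-y_U(s)|\to 0$ in probability as $V\to\infty$, where $y_U$ solves the displayed ODE with $y_U(0)=\beta$. Rescaling time by $U$ gives $y_U(s)=z(Us)$ with $z'=z(\tilde f_1(x)-\tilde f_2(x)z)$ and $z(0)=\beta$, a scalar autonomous equation whose only nonnegative equilibria are $0$ and $g(x)=\tilde f_1(x)/\tilde f_2(x)$; since $f_1(x)\ge 0$ and $f_2(x)>0$, an elementary phase-line analysis shows $z(\tau)\to g(x)$ as $\tau\to\infty$ from every $\beta>0$ (monotonically when $g(x)>0$, and with $z(\tau)=O(1/\tau)$ when $f_1(x)=0$). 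Hence $\sup_{\delta\le s\le t}|y_U(s)-g(x)|=\sup_{\delta U\le\tau\le tU}|z(\tau)-g(x)|\to 0$ as $U\to\infty$, and combining the two limits --- the inner one via the continuous mapping theorem applied to the functional $\phi\mapsto\sup_{\delta\le s\le t}|\phi(s)-g(x)|$ on path space --- yields $\lim_{U\to\infty}\lim_{V\to\infty}P\bigl(\sup_{\delta\le s\le t}|Y(s)/V-g(x)|>\epsilon\bigr)=0$.

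The hard part is the reconciliation that the construction is designed to achieve: having every reaction catalyzed by $X$, allowing the coefficients of $\tilde f_1$ and $\tilde f_2$ (hence the rate constants) to have any sign, and pinning the equilibrium exactly at the prescribed rational value $g(x)$, all simultaneously. A polynomial that is nonnegative on $\mathbb Z_{\ge 0}$ need not --- even after multiplication by any strictly positive polynomial --- have nonnegative coefficients in the $x^{(\cdot)}$ basis (for example $(x-2)^2$, whose double root at a positive integer makes this impossible), so $g$ cannot in general be realized as (pure production)/(pure linear degradation); the quadratic parametrization $\bar y\,(\tilde f_1-\tilde f_2\bar y)$, together with the harmless rescaling by $x$, is what circumvents this. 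The price is the spurious equilibrium at $\bar y=0$, which is why ``$Y(0)=O(V)$'' must be read as $Y(0)$ being genuinely of order $V$, so that the fluid trajectory starts in the basin of attraction of $g(x)$; the degenerate value $x=0$, at which every rate catalyzed by $X$ vanishes and $Y/V$ stays at its initial value, is either excluded or handled by taking $Y(0)\approx g(0)V$. The remaining steps --- verifying the density-dependent form precisely, checking Kurtz's hypotheses, and the bookkeeping of the iterated limit with the $\delta>0$ cutoff --- are routine.
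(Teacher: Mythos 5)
Your construction and argument are essentially the paper's: the same four families of reactions ($aX+Y\to aX$ or $aX+2Y$ with rate constant proportional to $U$, and $aX+2Y\to aX+Y$ or $aX+3Y$ with rate constant proportional to $U/V$), the same fluid limit $y'=Uy\,(f_1(x)-f_2(x)y)$ obtained from Kurtz's law of large numbers as $V\to\infty$, and the same relaxation to the equilibrium $g(x)$ as $U\to\infty$. Your only deviations are minor but sensible: multiplying $f_1,f_2$ by $x$ so that the $i=0,\ j=0$ terms are genuinely catalyzed by $X$ (a point the paper's own reaction set glosses over, at the price of degenerating at $x=0$), and explicitly flagging that the argument needs $\lim_{V\to\infty}Y(0)/V>0$ so the fluid trajectory starts off the spurious equilibrium at $0$ --- an assumption that is implicit in the paper's proof as well.
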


\begin{proof}
Let
\begin{align*}
    \Re
    &=\{iX+Y \xrightarrow{-s_i U} iX: s_i<0\}
    \cup \{iX+Y\xrightarrow{s_i U} iX+2Y: s_i>0\}\\
    &\quad \cup \{jX+2Y \xrightarrow{-r_j U/V} jX+Y: r_j<0\}
    \cup \{ jX+2Y \xrightarrow{r_j U/V} jX+3Y: r_j>0\}.
\end{align*}
These reactions leave $X(t)\equiv X(0)=x$ invariant for all $t$, since none alters the copy number of $X$.  
Hence we only track $Y(t)$, which evolves as a one-dimensional Markov chain with reactions  
\begin{align*}
    &\{Y \xrightarrow{-s_i U x^{(i)}} \emptyset: s_i<0\}
\cup 
\{Y\xrightarrow{s_i U x^{(i)}} 2Y : s_i>0\}
\\ &\cup\{2Y \xrightarrow{-r_j Ux^{(j)}/V} Y: r_j<0\}
\cup
\{ 2Y \xrightarrow{r_j Ux^{(j)}/V} 3Y: r_j>0\},
\end{align*}
under mass-action kinetics.  
The mean transition rate of $Y$ thus has degree $R=2$ with leading coefficient $\alpha=-Uf_2(x)/V<0$, ensuring non-explosivity by \cite{XuHansenWiuf2022}.  

By \cite[Theorem~2.11]{kurtz1970solutions}, for each fixed $t>0$ and any $\epsilon>0$,
\[
\lim_{V\to \infty}P\!\left(\sup_{s\le t}\left|\frac{Y(s)}{V}-y(s)\right|>\epsilon\right)=0,
\]
where $y(t)$ satisfies $y'(t)=Uy(f_1(x)-f_2(x)y)$ with $y(0)=Y(0)/V$.  
As $U\to\infty$, $y(t)$ converges uniformly on $[\delta,t]$ to $g(x)$ for any $\delta\in(0,t)$.  

Therefore, for any $\epsilon>0$, there exists $U_0>0$ such that for $U>U_0$, we have that $
\sup_{\delta\le s\le t}|y(s)-g(x)|<\tfrac{\epsilon}{2}$.
Hence, for all $U>U_0$,
\begin{align*}
   P\!\left(\sup_{\delta\le s\le t}\left|\frac{Y(s)}{V}-g(x)\right|>\epsilon\right)
   &\le 
   P\!\left(\sup_{\delta\le s\le t}\left|\frac{Y(s)}{V}-y(s)\right|>\tfrac{\epsilon}{2}\right).
\end{align*}
Letting $V\to\infty$ yields the desired result.
\end{proof}

Finally, suppose that some slow reactions catalyzed by $Y$ as in \eqref{ex:reduction row 1} are added to those considered in Proposition~\ref{prop:general rational function}.  
Then, when $X(t)=x$, the fast subsystem ensures that $Y(t)/V\approx g(x)$ for each $t$.  
Consequently, $X$ behaves as a one-dimensional Markov chain whose transition rates are given by $g(x)$.  
This provides a systematic framework for studying high-dimensional reaction systems via reduction to one-dimensional Markov chains.

\section{Discussion}
In this paper, we established criteria for various dynamical properties of continuous-time Markov chains on $\S \subseteq \mathbb Z_{\ge 0}$.
These criteria were formulated in terms of the asymptotic behaviors of the mean and variance of the transition rates (Theorem~\ref{thm:main}).
Based on these results, when the transition rates admit a Laurent-type expansion~\eqref{eq:laurent}, we provided a complete classification of the dynamical features of continuous-time Markov chains on $\S$ in terms of the degrees and coefficients of their transition rates (Theorem~\ref{thm: main for laurante}).

As applications of this classification, we showed that every single-species stochastic reaction system under mass-action kinetics is exponentially ergodic whenever it is positive recurrent.
Using the same framework, we further analyzed the dynamical properties of birth-and-death processes with Michaelis–Menten and Haldane kinetics.

Finally, we demonstrated that reaction networks with a single species and rational transition rates admitting Laurent-type expansions can approximate multi-species mass-action reaction systems.
Beyond reproducing the probability distributions of the original reaction systems, such one-dimensional systems can also provide insights into their dynamical behaviors.
For instance, for a multi-dimensional Markov chain $\bar X(t)=(\bar X_1(t),\dots,\bar X_d(t))$, let $\tau_{x_1}=\inf\{t: \bar X_1(t)=x_1\}$.
If $P(\tau_{x_1}=\infty)>0$, then the entire system $\bar X$ is transient.
That is, the projection of $\bar X$ onto a single coordinate can determine the transience of the original system.
Similarly, if there exists $x_1\in \mathbb Z_{\ge 0}$ such that $|P(\bar X_1(t)=x_1)-\pi(x_1)|$ does not decay exponentially in time $t$ for the stationary distribution $\pi$, then $\bar X$ is non-exponentially ergodic. That is, a behavior of a single coordinate of $\bar X$ can be used to determine dynamical features of $\bar X$. 

A similar viewpoint was already proposed in~\cite{ballif2025partition}, where the positive recurrence of high-dimensional reaction networks is studied via their projected Markov chains on one-dimensional state spaces.
In this sense, a one-dimensional projection can serve as a lens through which the behavior of the entire system can be examined.
Since such projections can be approximated by one-dimensional Markov chains, as shown in Section~\ref{sec:reduction}, our classification provides a simple and effective methodology for analyzing the dynamical properties not only of one-dimensional Markov chains but also of higher-dimensional ones.

\appendix

\section{Proofs of Proposition \ref{prop: generators} and Corollory \ref{cor: for the sup assumption}}\label{sec : generator-expansion}

\textit{Proof of Corollary \ref{prop: generators}.} 
\noindent 1. 
    Since $x^p (\log x)^q$ is analytic on $[1,\infty)$, by Taylor expansion, for each $\eta \in \Gamma$ and $x > 1 + |\eta|$, we have
    $
        f(x+\eta) = f(x) + \eta f'(x) + \frac{1}{2}\eta^2 f''(x) + \frac{1}{6}f'''(x+c),
    $
    where $c$ depends on $x$ and lies between $x$ and $x+\eta$. Hence, for $x > 1 + |\eta|$,
    {\allowdisplaybreaks
    \begin{align*}
        &f(x + \eta) - f(x)
        = \eta\{p x^{p-1}(\log x)^q + qx^{p-1}(\log x)^{q-1}\} + \frac{1}{2}\eta^2\{p(p-1)x^{p-2}(\log x)^q \\
        &+ pqx^{p-2}(\log x)^{q-1} + q(p-1)x^{p-2}(\log x)^{q-1} + q(q-1)x^{p-2}(\log x)^{q-2}\}
        \\
        &+ pO(x^{p-3}(\log x)^q) + qO(x^{p-3}(\log x)^{q-1}) \\
        &= p x^{p-2}(\log x)^q \left\{x\eta + \frac{p-1}{2}\eta^2 + O\!\left(\frac{1}{x}\right)\right\}\\
        &+ qx^{p-2}(\log x)^{q-1}\left\{x\eta + \frac{2p-1}{2}\eta^2 + \frac{q-1}{2\log x}\eta^2 + O\!\left(\frac{1}{x}\right)\right\}.
    \end{align*}}

    Substituting this into \eqref{eq:gen for reactions}, we obtain, for large $x \in \S$,
    \begin{align*}
        \A f(x)
        &= p x^{p-2}(\log x)^q \left\{m(x)x + (p-1)v(x) + O\!\left(\frac{v(x)}{x}\right)\right\} \\
        &\quad + q x^{p-2}(\log x)^{q-1} \left\{m(x)x + (2p-1)v(x) + (q-1)\frac{v(x)}{\log x} + O\!\left(\frac{v(x)}{x}\right)\right\}.
    \end{align*}

\vspace{0.2cm}

  \noindent 2. For the exponential function $f(x) = e^{px}$, we have
    \begin{align*}
        \A f(x)
        &= \sum_{\eta \in \Gamma} (e^{p(x+\eta)} - e^{px})\lambda_\eta(x)
        = e^{px}\sum_{\eta \in \Gamma}(e^{p\eta}-1)\lambda_\eta(x) \\
        &= e^{px}\sum_{\eta \in \Gamma}\left\{p\eta + \frac{1}{2}(p\eta)^2 + O((p\eta)^3)\right\}\lambda_\eta(x) \\
        &= e^{px}\{pm(x) + p^2 v(x) + O(p^3)O(v(x))\}, \quad \text{as } p \to 0, \ x \to \infty.
    \end{align*}


\noindent \textit{Proof of Corollary \ref{cor: for the sup assumption}.}
   Suppose first that $\dlimsup_{x\to \infty}H_1(x)<1$.
Then one can choose $\epsilon>0$ small enough so that $\dlimsup_{x \to \infty}H_1(x) \le 1-\epsilon$.
Consider the function $f(x) = (\log x)^{\epsilon/2}\mathbbm{1}_{\{x>1\}}+\mathbbm{1}_{\{x\le 1\}}$. 
By Proposition~\ref{prop: generators}, we obtain
\begin{align}\label{eq: log x f 2}
    \A f(x) =  \frac{\epsilon}{2}\frac{v(x)}{x^2(\log x)^{2-(\epsilon/2)}}\left\{H_1(x)-(1-\epsilon)-\epsilon/2+O\left(\frac{\log x}{x}\right)\right\}. 
\end{align}
Since $v(x)>0$ for $x\in\S$, we have $\A f(x)<0$ for all sufficiently large $x$.
Taking $\dlimsup_{x\to\infty}$ on both sides of \eqref{eq: log x f 2} yields \eqref{eq: limsup1}.

If instead $\dliminf_{x\to \infty}H_1(x)>1$, choose $\epsilon>0$ small enough so that $\dliminf_{x\to \infty}H_1(x)\ge 1+\epsilon$, and define
$f(x) = (\log x)^{-\epsilon/2}\mathbbm{1}{{x>1}}+\mathbbm{1}{{x\le 1}}$.
Then Proposition~\ref{prop: generators} gives
\begin{align*}
     \A f(x) =  -\frac{\epsilon}{2}\frac{v(x)}{x^2(\log x)^{2+(\epsilon/2)}}\left\{H_1(x)-(1+\epsilon)+\epsilon/2+O\left(\frac{\log x}{x}\right)\right\}.
\end{align*} 
The desired inequality \eqref{eq: limsup2} then follows by the same reasoning.

\end{document}